\documentclass[a4paper,11pt]{article}
\usepackage{newclude}
\usepackage[nogytt]{mfpaperstuff2}
\usepackage{multicol}
\usepackage{arydshln}

\begin{document}
\hyphenation{mon-oid}
\hyphenation{mon-oids}
\hyphenation{twent-ieth}
\newcommand\starr{(\textasteriskcentered)\xspace}
\newcommand\codim{\operatorname{codim}}
\newcommand\pwi[2]{\mathrm{prj}(#1,#2)}
\newcommand\ud[1]{\widebar{#1}}
\newcommand\xli{X-line\sxpace}
\newcommand\sx[2]{(#1|#2)}
\newcommand\nrg{normalising reflection group\xspace}
\newcommand\ol\bar
\newcommand\cor{completely reducible\xspace}
\newcommand\kerm[1]{\operatorname{Ker}(#1)}
\newcommand\imm[1]{\operatorname{Im}(#1)}
\newcommand\as[2]{X_{#1}^{(#2)}}
\newcommand\bw[1]{Y_{#1}}
\newcommand\ci[1]{Z^{(#1)}}
\newcommand\poj[2]{\mathrm{prj}(#1,#2)}
\newcommand\eend{\operatorname{End}}
\renewcommand\mp{maximal projection\xspace}
\newcommand\mps{maximal projections\xspace}
\newcommand\fmx[4]{\begin{pmatrix}#1&#2\\#3&#4\end{pmatrix}}
\newcommand\amx[6]{\left(\!\!\begin{array}{cc|c}#1&#2&#3\\#4&#5&#6\end{array}\!\!\right)}
\newcommand\gl[2]{\operatorname{GL}_{#1}(#2)}
\newcommand\gll[1]{\operatorname{GL}(#1)}
\newcommand\im{\operatorname{im}}
\renewcommand\tr[1]{\operatorname{Tr}(#1)}
\newcommand\co{\ensuremath{C_0}}
\newcommand\spn[2]{\genfrac{\langle}{\rangle}{0pt}{}{#1}{#2}}
\newcommand\spa[3]{\left\langle\begin{smallmatrix}#1\\#2\\#3\end{smallmatrix}\right\rangle}
\newcommand\ik[1]{\operatorname{IK}(#1)}
\newcommand\cmp{complete\xspace}
\newcommand\cmpness{completeness\xspace}
\newcommand\stm{structure matrix\xspace}
\newcommand\Srm{Semi\-reflection mon\-oid\xspace}
\newcommand\srm{semi\-reflection mon\-oid\xspace}
\newcommand\Srms{\Srm{}s\xspace}
\newcommand\srms{\srm{}s\xspace}
\newcommand\prm{projection monoid\xspace}
\newcommand\prms{\prm{}s\xspace}
\newcommand\Prm{Projection monoid\xspace}
\newcommand\Prms{\Prm{}s\xspace}
\newcommand\mna[1]{\cala_{#1}}
\newcommand\mnc[1]{\calc_{#1}}
\newcommand\mnd[1]{\cald_{#1}}
\newcommand\mnap[1]{\mna{#1}^+}
\newcommand\mnb[1]{\calb_{#1}}
\newcommand\ze\zeta
\newcommand\frf[1]{\check{#1}}
\title{Monoids generated by projections}
\runninghead{Monoids generated by projections}
\msc{15A99, 20F55, 20M99}

\toptitle

\begin{abstract}
We define and explore \srms on a finite-dimensional vector space. These are monoids generated by \emph{semireflections}: linear maps fixing a subspace of codimension $1$. We mostly focus on the case of \prms (where the generating semireflections are non-invertible).  After exploring some general theory, we give some important examples, and give classification results for \prms on $\bbc^2$ and $\bbr^3$. We then briefly introduce affine \prms.
\end{abstract}

\tableofcontents

\section{Introduction}

The theory of finite reflection groups (both real and complex) is one of the cornerstones of twentieth-century mathematics, with applications in numerous areas, and continues to inspire a great deal of research. We begin by summarising the essentials.

Throughout this paper we let $V$ be a finite-dimensional vector space over a field $\bbf$. We define a \emph{reflection} on $V$ to be an invertible linear map $r:V\to V$ whose fixed-point space has codimension $1$ (or equivalently, for which the space $\lset{(r-1)v}{v\in V}$ has dimension $1$). A \emph{reflection group} on $V$ is a subgroup of $\gll V$ generated by reflections. We are interested primarily in \emph{finite} reflection groups. Obviously in a finite reflection group each reflection $r$ has finite order, so has eigenvalues $1,\dots,1,\ze$ for a root of unity $\ze\in\bbf$. Moreover, if $\nchar(\bbf)=0$ then $\ze\neq1$, so that $r$ is diagonalisable. (What we have called a reflection is often called a \emph{pseudoreflection}, with the term `reflection' being reserved for the case where $\ze=-1$.)

A finite reflection group $G$ on $V$ is \emph{irreducible} if there is no subspace $0<W<V$ which is preserved by $G$. It is \emph{\cor} if for every $W_1\ls V$ which is preserved by $G$ there is another subspace $W_2$ which is preserved by $G$ such that $V=W_1\oplus W_2$. In this case $G$ decomposes as a direct product $G_1\times G_2$, where $G_i$ is the pointwise stabiliser of $W_{3-i}$, and acts as a \cor reflection group on $W_i$. Iterating this decomposition shows that for any \cor finite reflection group $G$ on $V$ there is a direct sum decomposition $V=W_1\oplus\dots\oplus W_r$ and a corresponding direct product decomposition $G=G_1\times\dots\times G_r$, where $G_i$ acts as an irreducible reflection group on $V_i$ and fixes $V_j$ pointwise for each $j\neq i$.

It is essentially a consequence of Maschke's Theorem that if $\nchar(\bbf)=0$ then every finite reflection group is \cor, so to classify finite reflection groups it suffices to classify irreducibles. Over $\bbr$ this was done by Coxeter \cite{cox1}, who showed \cite{cox2} that the irreducible finite real reflection groups are precisely the finite Coxeter groups. Over $\bbc$ the irreducible finite reflection groups were classified by Shephard and Todd \cite{st}; these consist of the infinite family of groups $G(m,p,n)$, together with 34 exceptional groups $G_4,\dots,G_{37}$. Finite reflection groups in prime characteristic have been considered in a series of papers by Zaleskii and Serezhkin \cite{zs1,zs2,zs3}.

Now we introduce the generalisations of reflection groups that we study in this paper. Suppose $V$ is a finite-dimensional vector space over $\bbf$. We define a \emph{semireflection} to be a (not necessarily invertible) linear map $r:V\to V$ whose fixed-point space has codimension $1$. So a semireflection is either a reflection or a projection onto a subspace of dimension $n-1$. For the rest of this paper, we use the term `projection' to mean a projection onto a subspace of codimension $1$; or in other words, an idempotent linear map of nullity~$1$.

We define a \emph{\srm} on $V$ to be a submonoid of $\eend_\bbf(V)$ generated by semireflections. A \srm is a \emph{\prm} if it is generated by projections. We will mainly restrict attention to \prms in this paper; we are particularly interested in finding \emph{finite} \prms. We will concentrate on irreducibility and classification, leaving the study of the algebraic structure of these monoids to future work.

In the next section we will explore some basic theory of \srms and \prms, particularly relating to irreducibility and complete reducibility.  In \cref{exsec} we introduce two important examples and give come of their properties. In \cref{classsec,r3classsec} we classify irreducible \prms on $\bbc^2$ and on $\bbr^3$, and in the final section we introduce affine \prms.

\section{\Srms}

\subsection{Some more definitions}\label{somemore}

Here we give some more basic definitions relating to \srms. First suppose $M,N$ are \srms on vector spaces $V,W$ respectively. Then we say that $M$ and $N$ are \emph{equivalent} if there is a bijective linear map $f:V\to W$ such that $N=\lset{fmf^{-1}}{m\in M}$.

If $r:V\to V$ is a semireflection, then so is the dual map $r^\ast:V^\ast\to V^\ast$. So if $M$ is a \srm on $V$, then there is a dual \srm $M^\ast$ on $V^\ast$. Unlike in the case of finite reflection groups, a finite \srm need not be equivalent (or even isomorphic an an abstract monoid) to its dual.

Now we show how to decompose a \srm into a `reflection part' and a `projection part'. Suppose $M$ is a \srm. Then we define $M_0$ to be the submonoid generated by all the projections in $M$, and $M_1$ the submonoid generated by the reflections in $M$. Then $M_1$ is simply the set of invertible elements of $M$ and is a reflection group. Moreover, $M_0\cap M_1=\{1\}$, $M_0M_1=M$, and $M_1$ normalises $M_0$. (In many cases $M_0$ comprises all the non-invertible elements of~$M$, but this is not true in general.)

Given a \prm $M_0$, we define its \emph{\nrg} to be the group generated by all the reflections that normalise $M_0$. Now to construct a finite \srm, we can proceed in stages:
\begin{itemize}
\item
construct a finite \prm $M_0$;
\item
let $G$ be the \nrg of $M_0$;
\item
let $M_1$ be a finite subgroup of $G$, and let $M$ be the \srm $M_0M_1$.
\end{itemize}

In this paper we will mainly be concerned with the first step, though for many of the \prms that we construct we will find the \nrg.

We remark that one can consider instead \emph{projection semigroups} instead of \prms, with no practical difference; a projection semigroup is the same as the corresponding \prm but with the identity element removed. The choice is largely a matter of taste; our choice to work with monoids simplifies some statements (e.g.\ the expression $M_0M_1$ above would need to be $M_0M_1\cup M_1$ when working with semigroups, and references to direct products below would need modification) while complicating others (in \cref{exsec} below we repeatedly make reference to the non-invertible elements of a \prm, which are simply the elements of the corresponding projection semigroup).

\subsection{Irreducibility and complete reducibility}\label{irrsec}

Suppose $M$ is a \prm on $V$. We say that $M$ is \emph{reducible} if there is a subspace $0<W<V$ which is mapped to itself by all elements of $M$. We call such a subspace \emph{invariant} for $M$. We say that $M$ is \emph{irreducible} if it is not reducible.

We begin with some basic theory which parallels the corresponding theory for reflection groups. We leave the proof of the following simple \lcnamecref{invlem} to the reader.

\begin{lemma}\label{invlem}
Suppose $M$ is a \prm on $V$, and $0\ls W\ls V$. Then $W$ is invariant for $M$ \iff for every projection $p\in M$ either $\ker(p)\ls W$ or $W\ls\im(p)$.
\end{lemma}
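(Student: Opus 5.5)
The plan is to reduce invariance under the whole monoid to invariance under each projection, and then characterise when a single projection preserves $W$. Since $M$ is generated by projections and a composite of maps preserving $W$ preserves $W$, $W$ is invariant for $M$ \iff $p(W)\ls W$ for every generating projection. The statement quantifies over \emph{all} projections in $M$, not just generators. This is harmless: the generators are among the projections in $M$, and every projection in $M$ is itself an element of $M$. So it suffices to prove the following claim for a single projection $p$ (idempotent of nullity $1$): $p(W)\ls W$ \iff $\ker(p)\ls W$ or $W\ls\im(p)$.

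For the ``if'' direction, I will treat the two cases separately. If $W\ls\im(p)$, then $p$ fixes $W$ pointwise, since $p$ is idempotent and so acts as the identity on its image; hence $p(W)=W$. If $\ker(p)\ls W$, take $w\in W$ and write $w=p(w)+(w-p(w))$. Here $w-p(w)\in\ker(p)\ls W$ because $p^2=p$, so $p(w)=w-(w-p(w))\in W$.

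For the ``only if'' direction, suppose $p(W)\ls W$ and $W\not\ls\im(p)$. Choose $w\in W\setminus\im(p)$. Then $w-p(w)\in W$, and $w-p(w)$ is a nonzero element of $\ker(p)$, nonzero because $w\neq p(w)$ as $w\notin\im(p)$. Since $\ker(p)$ is one-dimensional, it is spanned by $w-p(w)$, so $\ker(p)\ls W$.

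This last step is the only place where nullity $1$ is used, and it is the main point to get right. Everything else is routine linear algebra using $V=\im(p)\oplus\ker(p)$. Note that the argument covers the trivial cases $W=0$ and $W=V$ automatically.
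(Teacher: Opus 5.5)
Your proof is correct: the reduction to single projections is valid, and both directions of the one-projection claim go through, with nullity $1$ used exactly where you say. The paper leaves this lemma to the reader, and your argument (using $w-p(w)\in\ker(p)$ together with $\dim\ker(p)=1$) is the natural one, matching the reasoning the paper uses implicitly elsewhere, e.g.\ in the proof of \cref{irrconds}.
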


This lemma allows us to further examine the structure of a reducible \prm. Suppose $M$ is a \prm and $W$ is an invariant subspace for $M$. For each projection $p\in M$, we consider the two possibilities in \cref{invlem}. If $\ker(p)\ls W$, then $p$ yields a projection on $W$ and the identity on $V/W$; on the other hand, if $W\ls\im(p)$, then $p$ acts as the identity on $W$ and yields a projection on $V/W$.

So we naturally obtain a \prm $M_W$ on $W$, generated by the restrictions to $W$ of the projections in $M$, and a \prm $M_{V/W}$ on $V/W$, generated by the projections induced by the projections in $M$ that fix $W$. The direct product $M_W\times M_{V/W}$ is naturally a quotient of $M$.

Now say that $M$ is a \emph{\cor} if for every invariant subspace $W$ there is another invariant subspace $X$ with $V=W\oplus X$, or equivalently, if the sum of the minimal invariant subspaces for $M$ is $V$.

In the case where $V$ is the direct sum $W_1\oplus W_2$ of two invariant subspaces, we can refine the above discussion. 
By \cref{invlem} we can partition the set of projections in $M$ as $P_1\sqcup P_2$, where $\ker(p)\ls W_i$ and $W_{3-i}\ls\im(p)$ for each $p\in P_i$. Then each projection in $P_1$ commutes with each projection in $P_2$. If we let $M_i$ be the monoid generated by $P_i$ for $i=1,2$, then every $m\in M_i$ fixes $W_{3-i}$ pointwise. So $M_1\cap M_2=\{1\}$ and $M\cong M_1\times M_2$. Each $p\in P_i$ acts as a projection on $W_i$, so we can view $M_i$ as a \prm on $W_i$. If $M$ is \cor, then so is $M_i$ (as a \prm on $W_i$).

Iterating this, we find that if $M$ is a \cor \prm on $V$, then there is a direct sum decomposition $V=V_1\oplus\dots\oplus V_r$ and a direct product decomposition $M=M_1\times\dots\times M_r$, where each $M_i$ is an irreducible \prm on $V_i$ (and acts as the identity on $V_j$ for each $j\neq i$). So to classify \cor \prms, it suffices to classify irreducible ones.

\begin{eg}
In this example we describe all reducible \prms in dimension $2$ (for any $\bbf$). As a consequence, we observe that (in contrast to the situation with reflection groups) finite \prms need not be \cor in characteristic $0$.

Suppose $\dim(V)=2$ and let $L$ be any line in $V$. We consider the \prms on $V$ having $L$ as an invariant subspace. By \cref{invlem} any projection $p$ in such a \prm must have $L$ as an eigenspace (that is, $L$ must be either the kernel or the image of $p$).

Let $P$ be \emph{any} finite set of projections all having $L$ as an eigenspace, and let $M$ be the monoid generated by $P$. Then we claim that $M$ is finite. To see this, suppose $p,q\in P$, and observe that:
\begin{itemize}
\item
if $L=\ker(p)=\ker(q)$, then $pq=q$;
\item
if $L=\im(p)=\im(q)$, then $pq=p$;
\item
if $L=\im(p)=\ker(q)$, then $pq=0$. 
\end{itemize}
As a consequence, any expression in the elements of $P$ reduces to an expression of length at most $2$ or to $0$, so $M$ is finite.

$M$ is reducible (because $L$ is invariant for $M$), but generally not \cor.
\end{eg}

\subsection{Images and kernels}

In this subsection we describe necessary and sufficient conditions for complete reducibility of a \prm $M$; here the theory diverges from the theory of finite reflection groups.

First we set out some definitions which will be used throughout the rest of the paper. Suppose $M$ is a \prm on $V$. Given subspaces $K,L<V$ with $\dim(K)=\codim(L)=1$ and with $K\nls L$, we write $\pwi KL$ for the projection with kernel $K$ and image $L$. We Say that a line $K$ is a \emph{kernel} of $M$ if $\pwi KL\in M$ for some $L$, and we say that a hyperplane $L$ is an \emph{image} of $M$ if $\pwi KL\in M$ for some $K$. We write $\kerm M$ for the set of kernels of $M$, and $\imm M$ for the set of images of $M$.

We say that $M$ is \emph{\cmp} if the following condition holds: whenever $K\in\kerm M$ and $L\in\imm M$ with $K\nls L$, the projection $\pwi KL$ lies in $M$.

\medskip
Now we can give the main theorem of this section. We define the \emph{trivial} \prm on $V$ to be the monoid containing just the identity element; clearly this is irreducible \iff $\dim(V)=1$.

\begin{thm}\label{irrconds}
Suppose $M$ is a non-trivial finite irreducible \prm on $V$. Then:
\begin{itemize}
\item
the sum of the kernels of $M$ is $V$;
\item
the intersection of the images of $M$ is $0$; and
\item
$M$ is \cmp.
\end{itemize}
\end{thm}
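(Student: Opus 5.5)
The first two assertions follow directly from \cref{invlem}; the real work is completeness.

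\textbf{Kernels and images.} Let $W$ be the sum of the kernels of $M$. Every projection $p\in M$ satisfies $\ker(p)\ls W$, so $W$ is invariant by \cref{invlem}. Since $M$ is non-trivial and generated by projections, it contains at least one projection, so $W\neq0$. Irreducibility then forces $W=V$. Dually, let $U$ be the intersection of the images of $M$. Every projection $p\in M$ has $U\ls\im(p)$, so $U$ is invariant. Since some image exists, $U<V$, and irreducibility gives $U=0$.

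\textbf{Completeness: reduction to finding one element.} Suppose $p=\pwi K{L'}\in M$ and $q=\pwi{K'}L\in M$ with $K\nls L$, so that $V=K\oplus L$. Consider the set $S=qMp$. Each element of $S$ kills $K$ and has image in $L$, so it is determined by its restriction to $L$, which is an endomorphism of $L$. Suppose some $s\in S$ restricts to an invertible map on $L$. Then $s$ generates a finite cyclic semigroup inside $M$, so some power $s^k$ is idempotent. Being idempotent and invertible on $L$, $s^k$ is the identity on $L$. Since $s^k$ also kills $K$, we get $s^k=\pwi KL\in M$. So completeness reduces to the following claim: some $m\in M$ has $qmp|_L$ injective, or equivalently $qmp$ has rank $n-1$.

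\textbf{Completeness: the main obstacle.} Proving this claim is where irreducibility must enter, and it is the step I expect to be hardest. Two invariant subspaces are available.
\begin{itemize}
\item The subspace $X=\lset{v\in V}{qmv=0\text{ for all }m\in M}$ is invariant, because $qm(m'v)=q(mm')v$. It is proper, since $m=1$ gives $X\ls\ker(q)=K'$. Hence $X=0$.
\item The subspace $Y=\sum_{m\in M}m(L)$ is invariant and nonzero. Hence $Y=V$.
\end{itemize}
Together these say that the maps $qmp|_L$ for $m\in M$ jointly separate points, but they give no single injective one. To get a single element, I would take an element $s\in S$ of maximal rank $r$. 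Note that $S$ is closed under $s,s'\mapsto s\,m\,s'$ for $m\in M$, since $(qmp)m''(qm'p)=q(mpm''qm')p$. Replacing $s$ by a suitable idempotent power (the stable rank; this needs care), I may assume $s$ is an idempotent of rank $r$ with image $I\ls L$ and kernel $N\supseteq K$. Now assume $r<n-1$. For every $m\in M$, the element $s\,m\,s$ also lies in $S$, and maximality of $r$ constrains it. I would then try to show that $I$, or $\sum_{m\in M}m(I)$ intersected appropriately, gives a proper nonzero invariant subspace, contradicting irreducibility via \cref{invlem}. This mirrors the standard argument that in a finite monoid the maximal-rank elements of a two-sided ``sandwich'' set $qMp$ must be as large as the irreducible module allows.
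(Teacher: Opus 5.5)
\textbf{Verdict: there is a genuine gap.} The two bullets on kernels and images are correct and are exactly the paper's argument. Your reduction of completeness is also correct: if some $s=qmp$ has rank $n-1$, an idempotent power of $s$ is the identity on $L$ and kills $K$, so it equals $\pwi KL$. This matches the last step of the paper's proof.

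The gap is that the claim carrying all the content, namely that some $qmp$ has rank $n-1$, is never proved.
\begin{itemize}
\item As you note yourself, $X=0$ and $Y=V$ are too weak.
\item The maximal-rank sketch never produces a proper nonzero invariant subspace. Your candidate $\sum_{m\in M}m(I)$ is invariant and nonzero, so by irreducibility it equals $V$. That gives no contradiction.
\item Replacing a maximal-rank $s$ by an idempotent power of the same rank is unjustified, since powers can lose rank.
\item Rank never increases along products of projections. For a projection $u$, $\operatorname{rank}(su)$ equals $\operatorname{rank}(s)$ if $\ker(s)\nls\im(u)$, and is one less otherwise. So what must be controlled is when nullity one survives a multiplication, not maximal rank in general.
\end{itemize}

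The missing idea is to work with nullity-one elements and the span of their kernels. If $v\in M$ has nullity $1$ and $u$ is a projection with $\ker(v)\nls\im(u)$, then $vu$ again has nullity $1$, with $\ker(vu)=\ker(u)$ and $\im(vu)=\im(v)$. The argument then runs as follows in your notation.
\begin{itemize}
\item Let $W$ be the span of the lines $\ker(qm)$, over all $m\in M$ for which $qm$ has nullity $1$.
\item Take a projection $u\in M$ with $W\nls\im(u)$. Choose such an $m$ with $\ker(qm)\nls\im(u)$. Then $q(mu)$ has nullity $1$ and kernel $\ker(u)$, so $\ker(u)\ls W$. By \cref{invlem}, $W$ is invariant.
\item $W$ contains $\ker(q)$ (take $m=1$), so $W\neq0$. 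Irreducibility gives $W=V$, hence $W\nls L'=\im(p)$.
\item So some $m$ has $\ker(qm)\nls\im(p)$. Then $qmp$ has nullity $1$, i.e.\ rank $n-1$, which is exactly what your reduction needs.
\end{itemize}
The paper runs the same argument using the span of the kernels of all elements of $M$ with image $L$, followed by right multiplication by a projection with kernel $K$.
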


\begin{pf}
Let $W$ be the sum of the kernels of $M$. If $v\in W$ and $p$ is a projection in $M$, then $p(v)-v$ lies in the kernel of $p$ and therefore lies in $W$, so $p(v)\in W$. So $W$ is invariant for $M$. Because $M$ is non-trivial, $W$ must be non-zero, so (because $M$ is irreducible) $W$ must equal $V$. The second statement follows in a similar way (or by duality).

Now we prove that $M$ is \cmp. Suppose $K$ is a kernel of $M$ and $L$ an image of $M$ with $K\nleqslant L$. Let $t\in M$ be a projection with kernel $K$.

Define
\[
W=\sum\lset{\ker(v)}{v\in M,\ \im(v)=L}.
\]
Then we claim that $W=V$. Obviously $W\neq0$, so to show that $W=V$ it suffices to show that $W$ is invariant for $M$. We do this using \cref{invlem}: we show that if $u$ is a projection in $M$ and that $W\nleqslant\im(u)$, then $\ker(u)\ls W$. Since $W\nleqslant\im(u)$ there is an element $v\in M$ such that $\im(v)=L$ and $\ker(v)\not\leqslant\im(u)$. This last condition means that $vu$ has nullity $1$, so that its image is $\im(v)=L$ and its kernel is $\ker(u)$. But then $\ker(u)\ls W$, as required.

So $W=V$, which means in particular that $W\nleqslant\im(t)$. So there is $v\in M$ such that $\im(v)=L$ and $\ker(v)\nls\im(t)$. This then means that $vt$ has nullity $1$, and therefore has kernel $\ker(t)$ and image $\im(v)=L$.

So $M$ contains an element $x=vt$ with kernel $K$ and image $L$. This element acts as an invertible linear map on $L$, and this linear map has finite order because $M$ is finite. So there is $a>0$ such that $x^a$ acts as the identity on $L$. But then $x^a$ is the projection $\pwi KL$.
\end{pf}

This leads to the following statement for \cor \prms.

\begin{cory}\label{credconds}
Suppose $M$ is a finite \cor \prm on $V$, and let $S$ be the sum of the kernels of $M$ and $T$ the intersection of the images of $M$. Then $V=S\oplus T$, and $M$ is \cmp.
\end{cory}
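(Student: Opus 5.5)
We reduce to the irreducible case via the decomposition of \cref{irrsec}. Since $M$ is \cor, there is a decomposition $V=V_1\oplus\dots\oplus V_r$ and $M=M_1\times\dots\times M_r$, where each $M_i$ is an irreducible \prm on $V_i$ acting as the identity on $V_j$ for $j\neq i$. Each $M_i$ is finite, being a submonoid of $M$. Reorder so that $M_1,\dots,M_s$ are non-trivial and $M_{s+1},\dots,M_r$ are trivial; triviality together with irreducibility forces $\dim V_i=1$ for $i>s$.

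First we identify the kernels and images of $M$. Every projection $p\in M$ has the form $p=(m_1,\dots,m_r)$ with $m_i\in M_i$. Nullity $1$ forces exactly one component $m_i$ to be non-invertible and all the others to be invertible. The invertible elements of a \prm are only the identity: a product of projections is invertible only if each factor is, and no projection is. So every projection of $M$ is a projection in some $M_i$ extended by the identity on $\bigoplus_{j\neq i}V_j$. Hence the kernels of $M$ are exactly the kernels of the $M_i$ (lines in $V_i$). The images of $M$ are exactly the hyperplanes $L'\oplus\bigoplus_{j\neq i}V_j$ with $L'\in\imm{M_i}$.

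Applying \cref{irrconds} to $M_i$ for $i\ls s$, the kernels of $M_i$ span $V_i$ and the images of $M_i$ intersect in $0$ inside $V_i$. Therefore $S=V_1\oplus\dots\oplus V_s$. For $T$, a vector $v=\sum v_j$ lies in every image coming from $M_i$ \iff $v_i$ lies in every image of $M_i$ within $V_i$, that is, $v_i=0$ (for $i\ls s$). The trivial factors contribute no images, so $T=V_{s+1}\oplus\dots\oplus V_r$, and $V=S\oplus T$.

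For \cmpness, take $K\in\kerm M$ and $L\in\imm M$ with $K\nls L$. Then $K\ls V_i$ is a kernel of $M_i$, and $L=L'\oplus\bigoplus_{j\neq k}V_j$ with $L'\in\imm{M_k}$. If $k\neq i$, then $K\ls V_i\ls L$, a contradiction, so $k=i$ and $K\nls L'$. Since $M_i$ is \cmp by \cref{irrconds}, we get $\pwi{K}{L'}\in M_i$. Extending it by the identity on the other summands gives exactly $\pwi KL$, which therefore lies in $M$.

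The only mildly delicate step is the claim that every projection of $M$ lives in a single factor, which rests on the non-invertibility argument above. The rest is bookkeeping.
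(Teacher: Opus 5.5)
Your proof is correct and follows the paper's strategy: reduce via the complete-reducibility decomposition to irreducible factors, apply \cref{irrconds} to each non-trivial factor, and note that a kernel and an image not containing it must come from the same factor. The differences are organisational. You use the fully iterated decomposition $M=M_1\times\dots\times M_r$ from \cref{irrsec} and read off $S$ and $T$ blockwise, and you get the single-factor property of projections from your nullity argument. The paper instead splits $V=W_1\oplus W_2$ once, inducts on $\dim V$ and on the order of $M$, and recombines $S_1,S_2,T_1,T_2$ by modular-law computations, taking the single-factor property directly from the partition of projections given by \cref{invlem}.
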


\begin{pf}
If $M$ is irreducible and non-trivial, the result follows from \cref{irrconds}. If $M$ is trivial, then the result is immediate. So suppose $M$ is reducible. Then by complete reducibility we can write $V=W_1\oplus W_2$, where $W_1$ and $W_2$ are invariant for $M$. As explained above, we can partition the set of projections in $M$ as $P_1\sqcup P_2$, where $\ker(p)\ls W_i$ and $W_{3-i}\ls\im(p)$ for all $p\in P_i$. Let $M_i$ be the submonoid generated by $P_i$.

Suppose first that one of $P_1,P_2$ is empty; say $P_1=P$ and $P_2=\emptyset$, so that $M=M_1$. Then $M$ acts as a \prm on $W_1$, and acts trivially on $W_2$. In particular, $W_2\ls T$. By induction on dimension we can assume that the \lcnamecref{credconds} holds for $M$ acting on $W_1$, so that $W_1=S\oplus(T\cap W_1)$, giving $V=S\oplus T$. Clearly also $M$ is \cmp in this case.

Now assume instead that both $M_1$ and $M_2$ are non-trivial. For $i=1,2$ let
\[
S_i=\sum_{p\in P_i}\ker(p),
\qquad
T_i=\bigcap_{p\in P_i}\im(p)
\]
Then $S=S_1+S_2$ and $T=T_1\cap T_2$. By induction on $\card M$ we can assume that the \lcnamecref{credconds} holds for $M_1$ and $M_2$, so that $V=S_1\oplus T_1=S_2\oplus T_2$. Clearly $S_2\ls W_2\ls T_1$, so that
\begin{align*}
V&=S_1+T_1
\\
&=S_1+(T_1\cap(S_2+T_2))
\\
&=S_1+S_2+(T_1\cap T_2)\qquad\text{since $S_2\ls T_1$}
\\
&=S+T,
\\
\intertext{while}
S\cap T&=(S_1+S_2)\cap T_1\cap T_2
\\
&=((S_1\cap T_1)+S_2)\cap T_2\qquad\text{since $S_2\ls T_1$}
\\
&=S_2\cap T_2
\\
&=0
\end{align*}
giving $V=S\oplus T$.

To show that $M$ is \cmp, suppose $K\in\kerm M$ and $L\in\imm M$ with $K\nls L$. Take projections $p,q\in M$ such that $K=\ker(p)$ and $L=\im(q)$. Then $p\in P_i$ and $q\in Q_j$ for some $i,j\in\{1,2\}$. Now $i$ must equal $j$, since if $i\neq j$ then $K=\ker(p)\ls W_i\ls\im(q)=L$, a contradiction. So (from the assumption that the \lcnamecref{credconds} holds for $M_i$) the projection $\pwi KL$ lies in $M_i$, and hence in $M$.
\end{pf}

Here is a converse statement.

\begin{thm}\label{crconv}
Suppose $M$ is a finite \prm on $V$, and let $S$ be the sum of the kernels of $M$ and $T$ the intersection of the images of $M$. If $V=S\oplus T$ and $M$ is \cmp, then $M$ is \cor.
\end{thm}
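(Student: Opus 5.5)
The plan is to verify the characterisation of complete reducibility directly: for every invariant subspace $W$ we exhibit an invariant complement. First we dispose of $T$. Every projection $p\in M$ has $T\ls\im(p)$, so $M$ fixes $T$ pointwise and every subspace of $T$ is invariant. The argument at the start of the proof of \cref{irrconds} shows that $S$ is invariant. Since $V=S\oplus T$, we reduce to $M$ acting on $S$. Here the sum of the kernels is $S$ and the intersection of the images $L\cap S$ is $T\cap S=0$. In what follows we therefore mostly assume $T=0$ and $S=V$, and reinstate $T$ at the end by splitting off a complement inside $T$ of $W\cap T$.

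Given an invariant subspace $W$, we define
\[
W'=\sum\lset{K\in\kerm M}{K\nls W}.
\]
We show $W'$ is invariant using \cref{invlem}. Let $u$ be a projection with $W'\nls\im(u)$. Then there is a kernel $K\nls W$ with $K\nls\im(u)$. By completeness $\pwi K{\im(u)}\in M$. Applying \cref{invlem} to $W$ and this projection, $K\nls W$ forces $W\ls\im(u)$. If $\ker(u)\nls W$ then $\ker(u)\ls W'$ by definition of $W'$, and we are done. Otherwise $\ker(u)\ls W\ls\im(u)$, which contradicts $u$ being a projection. So $W'$ is invariant.

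Next we show $V=W\oplus W'$ (in the reduced situation $T=0$). The sum is handled via the kernels. Every kernel lies either in $W$ or in $W'$, so $W+W'\supseteq S=V$. The key point is $W\cap W'=0$. For this we use the dual description. Every projection $q$ with $\ker(q)\nls W$ has $W\ls\im(q)$ by \cref{invlem}. Dually, applying the same argument to the invariant subspace $W'$ shows that every projection with kernel $\nls W'$ has image containing $W'$; in particular every projection with kernel in $W$ (note a kernel in $W\cap W'$ needs separate care) has image containing $W'$. Hence any $x\in W\cap W'$ lies in the image of every projection whose kernel is not in $W\cap W'$. The remaining projections have kernel in $W\cap W'$. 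For these we again use completeness: pairing such a kernel $K$ with an image $L$ of some projection whose kernel lies outside $W$ gives, when $K\nls L$, the projection $\pwi KL$. By \cref{invlem} applied to $W'$ this forces a containment that contradicts $K\ls W$. The aim is to conclude that no kernel lies in $W\cap W'$, so $W\cap W'$ lies in all images, hence in $T=0$.

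Finally, reinstating $T$, the complement of a general invariant $W$ is $W'$ plus a complement of $(W+W')\cap T$ inside $T$. I expect the main obstacle to be the step $W\cap W'=0$, specifically ruling out kernels lying in both $W$ and $W'$. If the completeness argument above does not close it directly, I would instead first establish the decomposition into minimal pieces. Define a graph on $\kerm M$, joining $K,K'$ when some image contains exactly one of them. Show each connected component spans an invariant subspace, using completeness as above. Then show, using $S\cap T=0$, that these subspaces are independent. This gives $V$ as a sum of minimal invariant subspaces.
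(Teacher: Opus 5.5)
Your overall structure is the paper's. Your $W'$ is exactly the paper's $S_2$, since a kernel not contained in $W$ belongs only to projections whose image contains $W$. Your proof that $W'$ is invariant is the paper's completeness argument for $S_2\ls T_1$ in another form. Your final complement $W'+C$ satisfies $S_2\ls W'+C\ls T_1$, just as the paper's $X$ does.

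However, the step you flag, $W\cap W'=0$, is not established, and the completeness argument you offer for it is vacuous. Suppose $K\ls W\cap W'$ and $L=\im(q)$ with $\ker(q)\nls W$. Then \cref{invlem} gives $W\ls L$, so $K\ls L$ always, and the pair $(K,L)$ never yields a projection $\pwi KL$. Even if it did, \cref{invlem} for $W'$ would hold trivially because $K\ls W'$, so no contradiction could arise. The graph-theoretic fallback is only a sketch, so as written the central step of the proof is missing.

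The fix is to pair in the opposite direction, and your own invariance argument already does this. You showed that if $W'\nls\im(u)$ then $W\ls\im(u)$, and hence $\ker(u)\nls W$. Contrapositively, every projection $u$ with $\ker(u)\ls W$ satisfies $W'\ls\im(u)$; this includes every projection with kernel in $W\cap W'$, and it is precisely the paper's $S_2\ls T_1$. You also have $W\ls\im(q)$ whenever $\ker(q)\nls W$. Together these put $W\cap W'$ inside every image of $M$, so $W\cap W'\ls S\cap T=0$, because $W'\ls S$.

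This argument works for arbitrary invariant $W$, so the reduction to $T=0$ is unnecessary. Let $C$ be a complement of $(W+W')\cap T$ in $T$. Then $W'\oplus C$ is invariant, and a dimension count gives $V=W\oplus(W'\oplus C)$. Use this version rather than the complement of $W\cap T$ from your first paragraph, which would additionally require $W=(W\cap S)+(W\cap T)$.
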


\begin{pf}
Take a subspace $W$ which is invariant for $M$. By \cref{invlem} we can partition the set of projections in $M$ as $P_1\sqcup P_2$, where $\ker(p)\ls W$ for each $p\in P_1$, and $W\ls\im(p)$ for each $p\in P_2$. For $i=1,2$ let $S_i$ be the sum of the kernels of the elements of $P_i$, and $T_i$ the intersection of the images of the elements of $P_i$. Then by definition $S_1\ls W\ls T_2$.

Now we claim that $S_2\ls T_1$. Suppose for a contradiction that $S_2\nls T_1$. Then there are projections $p\in P_1$ and $q\in P_2$ such that $\ker(q)\nls\im(p)$. The assumption that $M$ is \cmp means that $M$ contains the projection $r=\pwi{\ker(q)}{\im(p)}$. So $r$ belongs to $P_1$ or $P_2$. But if $r\in P_1$, then $\ker(q)=\ker(r)\ls W\ls\im(q)$, while if $r\in P_2$, then $\ker(p)\ls W\ls\im(r)=\im(p)$. Either way, we have our contradiction.

Next we claim that $S_2\cap W=0$. To see this, we observe that
\begin{align*}
0&=S\cap T
\\
&=(S_1+S_2)\cap T_1\cap T_2
\\
&=T_1\cap((S_2\cap T_2)+S_1)\qquad\text{because $S_1\ls T_2$}
\\
&=(S_1\cap T_1)+(S_2\cap T_2)\qquad\text{because $S_2\ls T_1$}
\\
&\gs S_2\cap T_2
\\
&\gs S_2\cap W.
\end{align*}

Dually, we can show that $V=T_1+W$. This, together with the previous two claims, shows that there is a subspace $X$ such that $S_2\ls X\ls T_1$, and $V=W\oplus X$. The fact that $S_2\ls X\ls T_1$, together with \cref{invlem}, shows that $X$ is invariant for $M$, and we are done.
\end{pf}

We end this section by considering \srms and complete reducibility. Recall that for a \srm $M$ we define the \emph{projection submonoid} $M_0$ and the \emph{reflection subgroup} $M_1$.

\begin{cory}\label{corsrm}
Suppose $M$ is a finite \cor \srm on $V$. Then $M_0$ is \cor.
\end{cory}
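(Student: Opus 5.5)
The plan is to verify the two hypotheses of \cref{crconv} for $M_0$, namely $V=S\oplus T$ (where $S$ is the sum of the kernels and $T$ the intersection of the images of $M_0$) and \cmpness of $M_0$. Since $M_0$ is finite, \cref{crconv} then gives the result. Two facts are used throughout. First, by definition $M_0$ is generated by \emph{all} projections lying in $M$, so every projection in $M$ lies in $M_0$, and $\kerm{M_0}$, $\imm{M_0}$ are the kernels and images of projections in $M$. Second, $M_1$ normalises $M_0$, and for $g\in M_1$ we have $g\,\pwi KL\,g^{-1}=\pwi{gK}{gL}$, so $M_1$ permutes $\kerm{M_0}$ and $\imm{M_0}$.

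\emph{Reduction to $M$ irreducible.} \Cref{invlem} extends to semireflections. If $r$ is a semireflection with fixed hyperplane $H_r$ and $W$ is $r$-invariant, then either $(r-1)V\ls W$ or $W\ls H_r$. So if $V=W_1\oplus W_2$ with both $W_i$ invariant for $M$, the generating semireflections split into two commuting families, exactly as in \cref{irrsec}, and $M\cong M^{(1)}\times M^{(2)}$ with $M^{(i)}$ a \srm on $W_i$ fixing $W_{3-i}$ pointwise. A projection $p\in M$ has nullity $1$. Each of its components is idempotent, and a component of nullity $0$ is an invertible idempotent, hence the identity. So exactly one component of $p$ is non-identity, which gives $M_0=M^{(1)}_0\times M^{(2)}_0$. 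Iterating, it suffices to treat $M$ irreducible: a direct sum of \cor \prms is \cor. We may also assume $M_0$ is non-trivial.

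\emph{The irreducible case.} The space $S$ is preserved by every projection in $M$, by the argument in the proof of \cref{irrconds}. It is also preserved by $M_1$, since $M_1$ permutes the kernels. So $S$ is $M$-invariant, and $S\neq0$ forces $S=V$. Dually $T=0$, so $V=S\oplus T$.

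For \cmpness, take $K\in\kerm{M_0}$, a projection $t$ with kernel $K$, and $L\in\imm{M_0}$ with $K\nls L$. I will rerun the argument of \cref{irrconds} with a larger family. Let
\[
W=\sum\lset{\ker(v)}{v\in M,\ v\text{ of nullity }1,\ \im(v)=L}.
\]
This space is non-zero. For a projection $u\in M$ with $W\nls\im(u)$, the same computation as before ($vu$ has nullity $1$, kernel $\ker(u)$ and image $L$) gives $\ker(u)\ls W$. For $g\in M_1$, the element $vg$ has nullity $1$, image $L$ and kernel $g^{-1}\ker(v)$, so $g^{-1}W\ls W$. Since $M_1$ is a finite group, $W$ is $M_1$-invariant. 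The main point needing care is exactly this step, and it is why $W$ is defined using all nullity-one elements of $M$ rather than only elements of $M_0$: otherwise invariance under reflections would fail. Hence $W$ is $M$-invariant, so $W=V\nls\im(t)$.

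Choose $v$ in the family with $\ker(v)\nls\im(t)$. Then $x=vt\in M$ has kernel $K$ and image $L$. Some power $x^a$ acts as the identity on $L$ by finiteness, so $x^a=\pwi KL$. This is a projection in $M$, hence lies in $M_0$, and $M_0$ is \cmp. By \cref{crconv}, $M_0$ is \cor.
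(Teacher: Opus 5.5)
Your proof is correct, but it takes a different route from the paper's.

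\textbf{The paper's argument.} It is Clifford-theoretic. Take a minimal $M$-invariant subspace $W$ and a minimal $M_0$-invariant $U\le W$. Because $M_1$ normalises $M_0$, every translate $gU$ with $g\in M_1$ is again a minimal $M_0$-invariant subspace. The sum of these translates is stable under $M_0$ and $M_1$, hence $M$-invariant, so it equals $W$. Therefore $V$, being a sum of minimal $M$-invariant subspaces, is a sum of minimal $M_0$-invariant ones. This takes a few lines. It needs no decomposition of $M$ and none of the kernel/image machinery. It uses the hypotheses only through two facts: $M$ is generated by $M_0\cup M_1$, and the group $M_1$ normalises $M_0$.

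\textbf{Your argument.} You first reduce to irreducible $M$, using the semireflection version of \cref{invlem} and the direct-product splitting. This includes the check that each projection of $M$ has exactly one non-identity component, so $M_0$ splits as well. You then verify the hypotheses of \cref{crconv} directly by extending the proof of \cref{irrconds} to semireflection monoids. The key adjustment is sound: you span $W$ by the kernels of \emph{all} nullity-one elements of $M$ with image $L$, which makes $W$ stable under right multiplication by $M_1$. Finiteness then enters essentially when you pass from $vt$ to the projection $(vt)^a$.

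\textbf{What each buys.} Your route gives an explicit semireflection analogue of \cref{irrconds}. If $M$ is finite and irreducible with $M_0$ non-trivial, then the kernels of $M_0$ span $V$, its images meet in $0$, and $M_0$ is complete, even though $M_0$ itself may be reducible. This is a by-product rather than a strengthening: it also follows from the paper's corollary together with \cref{credconds}, since $S$ and $T$ are then $M$-invariant. The price is length. Two routine steps are also asserted without proof and would each need a line:
\begin{itemize}
\item complete reducibility of $M$ passes to the factors $M^{(i)}$ on $W_i$, which is what justifies your ``iterating'';
\item a direct product of completely reducible projection monoids acting on complementary summands is completely reducible.
\end{itemize}
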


\begin{pf}
It suffices to prove that $V$ is the sum of its minimal $M_0$-invariant subspaces. Since $M$ is \cor, $V$ is the sum of its minimal $M$-invariant subspaces. So we just need to show that any minimal $M$-invariant subspace $W$ is a sum of minimal $M_0$-invariant subspaces. Certainly $W$ is $M_0$-invariant, so it contains a minimal $M_0$-invariant subspace $U$. Because $M_1$ normalises $M_0$, the set of $M_0$-invariant subspaces is preserved by the reflection group $M_1$, so the images of $U$ under $M_1$ are also minimal $M_0$-invariant subspaces. So the sum $X$ of these images is both $M_0$-invariant and $M_1$-invariant, so is $M$-invariant. Clearly $X\ls W$, because $W$ is $M_1$-invariant. So the minimality of $W$ means that $W=X$, and therefore $W$ is a sum of minimal $M_0$-invariant subspaces.
\end{pf}

\begin{rmk}
It is not generally the case that if a \srm $M$ is \cor then the reflection subgroup of $M$ must be. We will see an example of this in the next section.
\end{rmk}

\section{Two examples}\label{exsec}

In this section we construct two general examples of irreducible finite \prms. These are defined over any field $\bbf$, and in any dimension. For these examples we prove irreducibility, find the cardinality and the normalising reflection group, and classify minimal generating sets of projections.

We use $\lan\ \ran$ to denote the $\bbf$-linear span of a vector or set of vectors.

\subsection{Type $A$}\label{typeasec}

\subsubsection{Definition, irreducibility and cardinality}

Take $n\gs1$, and suppose $V$ is a vector space over $\bbf$ of dimension $n$. Take a vector space $V^+$ of dimension $n+1$ containing $V$, and choose a basis $\{e_0,\dots,e_n\}$ of $V^+$ such that $e_i-e_j\in V$ for all $i,j$.

Now given $i,j\in\{0,\dots,n\}$ with $i\neq j$, define $p_{ij}:V\to V$ to be the projection with kernel $\lan e_i-e_j\ran$ and image $\lspan{e_k-e_l}{k,l\neq i}$. Define $\mna n$ to be the monoid generated by the projections $p_{ij}$. Then $\mna n$ is well-defined up to equivalence.

We remark that if $n\gs3$, then $\mna n$ is not self-dual. To see this, observe that the kernels of a \prm naturally correspond to the images of its dual. So a self-dual \prm must have the same number of images as kernels. $\mna n$ has $n+1$ images and $\binom{n+1}2$ kernels when $n\gs2$, so is not self-dual if $n\gs3$.

However, $\mna1$ and $\mna2$ are self-dual. For $\mna1$ this is is easy to see. For $\mna2$, consider the dual space $(V^+)^\ast$ and let $\{\ep_0,\ep_1,\ep_2\}$ be the basis dual to $\{e_0,e_1,e_2\}$. Then the dual space $V^\ast$ can be realised as the quotient of $(V^+)^\ast$ by the subspace spanned by $\ep_0+\ep_1+\ep_2$. Now the map $(V^+)^\ast\to V$ given by linear extension of
\[
\ep_0\longmapsto e_1-e_2,\qquad \ep_1\longmapsto e_2-e_0,\qquad \ep_2\longmapsto e_0-e_1
\]
yields a linear bijection $V^\ast\to V$ under which the projection $p_{ij}$ corresponds to the projection $p_{ij}^\ast$.

\smallskip
Now we examine the basic properties of $\mna n$.

\begin{propn}\label{mnairr}
$\mna n$ is irreducible.
\end{propn}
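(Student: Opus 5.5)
We argue directly from \cref{invlem}. Let $0<W\ls V$ be invariant for $\mna n$; we show $W=V$. For each $i$ write $H_i=\lspan{e_k-e_l}{k,l\neq i}$, so that $\ker(p_{ij})=\lan e_i-e_j\ran$ and $\im(p_{ij})=H_i$. Since $W$ is invariant under every projection in $\mna n$, it is in particular invariant under each generator $p_{ij}$. So by \cref{invlem}, for every ordered pair $i\neq j$, either $e_i-e_j\in W$ or $W\ls H_i$.

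First we dispose of the case where $n=1$, which is immediate since then $\dim V=1$. So assume $n\gs2$. Consider the graph $\Gamma$ on vertex set $\{0,\dots,n\}$ with an edge $\{i,j\}$ whenever $e_i-e_j\in W$. If $\Gamma$ is connected, then all $e_i-e_j$ lie in $W$, hence $W=V$. So suppose $\Gamma$ is disconnected.

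Take $i,j$ in different components. Applying the dichotomy to both $(i,j)$ and $(j,i)$ gives $W\ls H_i\cap H_j$. Now let $I$ be the set of vertices $i$ with $W\ls H_i$. Every vertex not in $I$ is adjacent, via the dichotomy applied to the pair $(i,j)$, to every other vertex $j$. Hence if two vertices outside $I$ existed, they would be adjacent to everything, and $\Gamma$ would be connected. So at most one vertex lies outside $I$. Therefore $W\ls\bigcap_{i\in I}H_i$, where $I$ has at least $n$ elements.

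The main step is to check that this intersection is $0$. For this we use coordinates: a vector $\sum_k c_ke_k$ with $\sum c_k=0$ lies in $H_i$ exactly when $c_i=0$. If $c_i=0$ for at least $n$ of the $n+1$ indices, then the sum condition forces the remaining coordinate to vanish too. So $W=0$, a contradiction.

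Hence $\Gamma$ is connected and $W=V$, so $\mna n$ is irreducible. The only delicate point is the coordinate description of the $H_i$, which is routine.
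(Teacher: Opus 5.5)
Your proof is correct and follows essentially the same route as the paper: you use the same graph of differences $e_i-e_j$ lying in $W$, the dichotomy from \cref{invlem} forcing $W$ into the images $H_i$, and the vanishing of the intersection of those images, which you verify explicitly in coordinates where the paper simply calls it easy. The only cosmetic difference is that the paper shows every vertex is isolated, while you bound the number of vertices outside $I$; in fact a single vertex outside $I$ would be adjacent to all others and make $\Gamma$ connected, so $I$ is the whole vertex set, though your weaker bound already suffices.
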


\begin{pf}
Suppose $W\ls V$ is invariant for $\mna n$; then we must show that $W=0$ or $W=V$. Let $G$ be the graph with vertex set $\{0,\dots,n\}$ and an edge from $i$ to $j$ \iff $e_i-e_j\in W$. Then clearly each connected component of $G$ is a complete graph. If $G$ is the complete graph then $W=V$ and we are done, so assume otherwise. Then we claim that every connected component $C$ of $G$ consists of a single vertex. To see this, take $i\in C$ and $j\notin C$. Then there is no edge from $i$ to $j$ in $G$, so $e_i-e_j\notin W$; that is, $\ker(p_{ij})\nls W$. Now \cref{invlem} gives $W\ls\im(p_{ij})=\lspan{e_k-e_l}{k,l\neq i}$. In other words, $W$ does not contain $e_i-e_k$ for any $k\neq i$, so that $i$ is an isolated vertex of $G$, and therefore $C=\{i\}$.

Applying this argument for every connected component, we find that $G$ is the empty graph. So $W$ does not contain any of the kernels of $\mna n$, and therefore is contained in every image of $\mna n$. But it is easy to see that the intersection of the images of $\mna n$ is $0$.
\end{pf}

\begin{propn}\label{ordermna}
$\mna n$ is finite, with
\[
\card{\mna n}=
\begin{cases}
(n+1)^{n+1}-(n+1)!-n+1&\text{ if }\nchar(\bbf)\neq2,
\\
(n+1)^{n+1}-(n+1)!-2^n\binom{n+1}2+\binom n2+1.&\text{ if }\nchar(\bbf)=2.
\end{cases}.
\]
\end{propn}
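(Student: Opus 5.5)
The plan is to lift $\mna n$ to the full transformation monoid on $\{0,\dots,n\}$ and then count how many transformations collapse when restricted to $V$. For a map $f:\{0,\dots,n\}\to\{0,\dots,n\}$, let $\hat f$ be the linear map of $V^+$ with $\hat f(e_k)=e_{f(k)}$. Then $\hat f$ preserves $V=\lspan{e_i-e_j}{i,j}$, because $\hat f(e_i-e_j)=e_{f(i)}-e_{f(j)}$, so restriction gives a monoid homomorphism $\Phi:T_{n+1}\to\eend_\bbf(V)$.

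The first step is to check that $p_{ij}$ is the image of the idempotent $\epsilon_{ij}$ sending $i\mapsto j$ and fixing everything else. Indeed $\hat\epsilon_{ij}$ kills $e_i-e_j$ and fixes $e_k-e_l$ for $k,l\neq i$. Hence $\mna n=\Phi(E)$, where $E$ is the submonoid of $T_{n+1}$ generated by the rank-$n$ idempotents $\epsilon_{ij}$.

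The second step is to identify $E$. By Howie's theorem, the idempotents of rank $n$ generate the singular part $T_{n+1}\setminus S_{n+1}$. So $E=\{1\}\cup(T_{n+1}\setminus S_{n+1})$, which has $(n+1)^{n+1}-(n+1)!+1$ elements and is finite. I would either quote Howie, or sketch the standard induction: every non-bijective map is a product of the $\epsilon_{ij}$ and permutations, and the permutations can be absorbed using $\epsilon_{ij}$-products on maps of rank at most $n$. It is also immediate that $E$ contains no nonidentity bijection.

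The third step, and the main point, is to determine when $\Phi(f)=\Phi(g)$ for $f,g\in E$. This holds \iff $e_{f(i)}-e_{f(j)}=e_{g(i)}-e_{g(j)}$ for all $i,j$.
\begin{itemize}
\item If $f$ has rank $1$ (constant), then $\Phi(f)=0$. Conversely $\Phi(f)=0$ forces $f$ to be constant. So the $n+1$ constants collapse to one element, a loss of $n$.
\item If $f$ has rank at least $2$, take $i,j$ with $f(i)\neq f(j)$. The vector $e_{f(i)}-e_{f(j)}$ equals $e_a-e_b$ only for $(a,b)=(f(i),f(j))$, except in characteristic $2$, where the pair $(a,b)=(f(j),f(i))$ also works.
\item In characteristic not $2$, the condition therefore determines $f(i)$ and $f(j)$ whenever they differ. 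This pins down $f$ completely, so $\Phi$ is injective on maps of rank at least $2$.
\item In characteristic $2$, the same argument shows $f=g$ when the rank is at least $3$. For rank $3$ or more, comparing the pairs $\{f(i),f(j)\}$, $\{f(i),f(k)\}$, $\{f(j),f(k)\}$ for three points with distinct images forces consistent orientations.
\item In characteristic $2$ with rank exactly $2$, $f$ and $\sigma f$ coincide, where $\sigma$ swaps the two image points. This pairs off the rank-$2$ maps with a given image $\{a,b\}$.
\end{itemize}
Verifying this characteristic-$2$ rank-$3$ consistency argument carefully is the step I expect to need the most care.

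The final step is the count. In characteristic not $2$ the loss is only from the constants, giving $(n+1)^{n+1}-(n+1)!-n+1$.

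In characteristic $2$ there are additional collapses among the rank-$2$ maps. There are $\binom{n+1}2(2^{n+1}-2)$ such maps, and they are identified in pairs. This loses $\binom{n+1}2(2^n-1)$ further elements, so the total is
\[
(n+1)^{n+1}-(n+1)!+1-n-2^n\tbinom{n+1}2+\tbinom{n+1}2.
\]
Since $\binom{n+1}2-n=\binom n2$, this equals the stated formula. The small case $n=1$ needs a quick separate check: there $\binom{n}{2}=0$ and the rank-$2$ maps are bijections, so they are absent from $E$.
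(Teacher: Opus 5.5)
Your proposal follows the paper's proof essentially step for step. You lift to $\mnap n$, which is the full transformation monoid on $\{0,\dots,n\}$, and use Howie's theorem to identify its singular part. You then determine exactly when two transformations restrict to the same map on $V$: when $\nchar(\bbf)\neq2$ only the constants collapse, and in characteristic $2$ the two-valued maps also collapse in pairs $f,\sigma f$. Finally you count. Your handling of the characteristic-$2$ case of rank at least $3$ is more careful than the paper's.

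One correction is needed. Your closing check for $n=1$ does not confirm the formula; it refutes it. For $n=1$ the space $V=\langle e_0-e_1\rangle$ is a line, so $p_{01}=p_{10}=0$ and $\mna{1}=\{1,0\}$ has two elements in every characteristic. The characteristic-$2$ formula instead gives $2^2-2!-2\binom22+\binom12+1=1$. As you observe, the two-valued maps are then bijections: the identity, already counted by the $+1$, and the transposition, which is not in $E$. So no pairing loss occurs, the subtraction of $\binom{n+1}2(2^n-1)=1$ is unjustified, and the correct count at $n=1$ is the characteristic-$\neq2$ value. The characteristic-$2$ formula therefore needs $n\ge2$. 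The paper's proof makes the same slip when it counts the $(2^{n+1}-2)\binom{n+1}2$ two-valued functions among the non-bijective ones.

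A smaller point: when you pair off the rank-$2$ maps, state explicitly that $\Phi(f)=\Phi(g)$ forces $g\in\{f,\sigma f\}$, so each class has exactly two elements. This follows from your coefficient comparison, since $g$ must be constant on each fibre of $f$ and take the two values $a,b$ on the two fibres.
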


\begin{pf}
Recall that we view $V$ as a subspace of a vector space $V^+$ with basis $\{e_0,\dots,e_n\}$. If we define $p_{ij}^+:V^+\to V^+$ for each $i,j$ by mapping $e_i\mapsto e_j$, and $e_k\mapsto e_k$ for all $k\neq i$, then $p_{ij}^+$ is a projection. Now if we define $\mnap n$ to be the \prm on $V^+$ generated by the projections $p_{ij}^+$, then $V$ is invariant for $\mnap n$ and $p_{ij}^+$ acts on $V$ as $p_{ij}$.

The basis $B=\{e_0,\dots,e_n\}$ for $V^+$ is mapped to itself by every element of $\mnap n$, with the non-invertible elements of $\mnap n$ yielding non-invertible functions from $B$ to $B$. Obviously any element of $\mnap n$ is determined by its action on $B$, so we can view $\mna n$ as a submonoid of the \emph{full transformation monoid} $F$ of all functions from $B$ to $B$. In fact, we can be more precise: the projections $p_{ij}^+$ correspond to the idempotents in $F$ of defect $1$ (i.e.\ with exactly one non-fixed point), and Howie shows \cite[Theorem I]{howie} that these idempotents generate the subsemigroup consisting of \emph{all} non-bijective elements of~$F$.

So the number of non-identity elements of $\mnap n$ equals the number of non-bijective elements of $F$, which is $(n+1)^{n+1}-(n+1)!$. But $\mna n$ is isomorphic to a quotient of $\mnap n$. To find the order of this quotient, we have to determine when two different functions $f,g:B\to B$ yield the same linear map on $V$. For this we consider separately the cases $\nchar(\bbf)\neq2$ and $\nchar(\bbf)=2$. For a function $f:B\to B$, let $\frf f:V\to V$ be the map obtained by extending linearly to $V^+$ and then restricting to $V$.

First suppose $\nchar(\bbf)\neq2$. Then we claim that $\frf f=\frf g$ \iff $f$ and $g$ are both constant. The `if' is clear, since if $f$ is constant then $\frf f$ is the zero map. For the `only if', suppose that $f\neq g$ and that $f$ is not constant. Then there are $i,j$ such that $f(e_i)\neq f(e_j)\neq g(e_j)$. This yields $\frf f(e_i-e_j)\neq \frf g(e_i-e_j)$, so that $\frf f\neq \frf g$.

Now suppose $\nchar(\bbf)=2$. Again, if $f$ and $g$ are both constant then $\frf f=\frf g$. If there are $i,j$ such that $\card{\{f(e_i),f(e_j),g(e_i),g(e_j)\}}\gs3$, then $\frf f(e_i-e_j)\neq \frf g(e_i-e_j)$. Likewise, if there are $i,j$ such that $f(e_i)\neq f(e_j)$ while $g(e_i)=g(e_j)$, or $f(e_i)=f(e_j)$ while $g(e_i)\neq g(e_j)$, then $\frf f(e_i-e_j)\neq \frf g(e_i-e_j)$. If there are no such $i,j$ and $f$ and $g$ are not both constant, then $f$ and $g$ between them take exactly two values, say $e_k$ and $e_l$, with $f(e_i)=e_k$ \iff $g(e_i)=e_l$. Now $\frf f(e_i-e_j)$ equals $e_k+e_l$ if $f(i)\neq f(j)$, and $0$ otherwise, and similarly for $g$, so that $\frf f=\frf g$.

Now we can find $\card{\mna n}$: there are $(n+1)^{n+1}$ functions $\{0,\dots,n\}\to\{0,\dots,n\}$, and $(n+1)!$ of these are bijective. The non-bijective functions correspond to non-invertible elements of $\mna n$, except that the $n+1$ different constant functions all give the same element (namely, the zero element) of $\mna n$; furthermore, if $\nchar(\bbf)=2$, the $(2^{n+1}-2)\binom{n+1}2$ functions $f$ taking exactly two values occur in pairs giving the same $\frf f$. So the number of non-invertible elements of $\mna n$ is $(n+1)^{n+1}-(n+1)!-n$ if $\nchar(\bbf)\neq2$, or $(n+1)^{n+1}-(n+1)!-n-(2^n-1)\binom{n+1}2$ if $\nchar(\bbf)=2$.
\end{pf}

\subsubsection{Normalising reflections}

Now (following the discussion in \cref{somemore}) we find the \nrg of $\mna n$. For this, we assume $n\gs2$. Clearly an element $r\in\gll V$ normalises $\mna n$ \iff it preserves the set of lines $\lan e_i-e_j\ran$. 
%
It is easy to show that in this case there is an element $r^+\in\gll{V^+}$ which extends $r$ and preserves the set of lines $\lan e_i\ran$. The assumption that $n\gs2$ means that $r^+$ is uniquely defined. Now the fact that $r^+$ preserves the set of lines $\lan e_i-e_j\ran$ means that there is a permutation $\pi$ of $\{0,\dots,n\}$ and an element $\la\in\bbf^\times$ such that $r^+(e_i)=\la e_{\pi(i)}$ for all $i$.

\begin{propn}\label{mnarefs}
Suppose $n\gs2$, and $r$ is a reflection on $V$ which preserves the set of lines $\lan e_i-e_j\ran$ for $i\neq j$. Let $\pi,\la$ be as defined above. Then one of the following occurs.
\begin{enumerate}
\item\label{transp}
$\la=1$ and $\pi$ is a transposition $(i\ j)$.
\item\label{2spec}
$n=2$, $\la=-1$ and $\pi$ is a transposition $(i\ j)$.
\item\label{3spec}
$n=3$, $\la=-1$ and $\pi$ is a double transposition $(i\ j)(k\ l)$.
\item\label{2spec2}
$n=2$, $\la$ is a primitive cube root of unity in $\bbf$ and $\pi$ is a $3$-cycle $(i\ j\ k)$.
\end{enumerate}
\end{propn}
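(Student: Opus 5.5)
The idea is to compute the fixed space of $r$ directly from the formula $r^+(e_i)=\la e_{\pi(i)}$. We then impose that it has codimension exactly $1$ in $V$.

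Write $r^+$ on $V^+$ as $\la P_\pi$, where $P_\pi$ is the permutation matrix of $\pi$. The space $V$ is the sum-zero hyperplane $\lset{\sum a_ie_i}{\sum a_i=0}$, and it is invariant under $r^+$. On $V^+$ itself, $r^+$ fixes the line $\lan e_0+\dots+e_n\ran$ up to the scalar $\la$. So $V^+/V$ is one-dimensional, and $r^+$ acts on it by multiplication by $\la$. Hence the eigenvalue multiset of $r$ on $V$ is that of $\la P_\pi$ on $V^+$ with one copy of $\la$ removed. Over an algebraic closure, each cycle of $\pi$ of length $m$ contributes the eigenvalues $\la\omega$, with $\omega$ ranging over the $m$th roots of unity. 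One has to be careful in characteristic dividing $m$. There it is cleaner to argue with $\dim\ker(r-1)$ directly rather than with eigenvalues, since $r$ need not be diagonalisable.

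Concretely, I would compute $\dim\ker(r^+-1)$ on $V^+$. For each cycle $C$ of $\pi$ of length $m$, the space $\lspan{e_i}{i\in C}$ contributes fixed vectors of dimension $1$ if $\la^m=1$, and $0$ otherwise. The fixed vector is $\sum_{t}\la^{-t}e_{\pi^t(i)}$. Then $\dim\ker(r-1)$ equals $\dim\ker(r^+-1)$ minus $0$ or $1$. The correction is $1$ exactly when some fixed vector of $r^+$ has nonzero coordinate sum. If $\la=1$ the sum vector $e_0+\dots+e_n$ is fixed and lies outside $V$, so the correction is $1$. If $\la\ne1$, any fixed vector $v$ satisfies $s(v)=s(r^+v)=\la s(v)$, where $s$ is the coordinate sum, so $s(v)=0$ and the correction is $0$. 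We require $\dim\ker(r-1)=n-1$.

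Now split into cases.
\begin{itemize}
\item If $\la=1$, then $\dim\ker(r-1)=c-1$, where $c$ is the number of cycles of $\pi$. Setting $c-1=n-1$ gives $c=n$, so $\pi$ is a transposition. This is case~\ref{transp}. The map is then genuinely a non-identity reflection, since $\pi\ne1$; if $\pi=1$ and $\la=1$, then $r=1$ is excluded.
\item If $\la\ne1$, then $\dim\ker(r-1)$ is the number of cycles whose length $m$ satisfies $\la^m=1$. Each such cycle has length at least $2$, so there are at most $(n+1)/2$ of them. We need $n-1\le(n+1)/2$, which gives $n\le3$. For $n=3$ we need two cycles of length $2$ with $\la^2=1$, so $\la=-1$ and $\pi$ is a double transposition; this is case~\ref{3spec}. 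For $n=2$ we need exactly one cycle of length $m\in\{2,3\}$ with $\la^m=1$, $\la\ne1$. If $m=2$ then $\la=-1$ and $\pi$ is a transposition, giving case~\ref{2spec}. If $m=3$ then $\la$ is a primitive cube root of unity and $\pi$ is a $3$-cycle, giving case~\ref{2spec2}.
\end{itemize}

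Finally, one checks that in the non-$\la=1$ cases $r$ is invertible, which is automatic since $\la\ne0$, and that it is not the identity. The main subtlety I expect is characteristic issues. In characteristic $2$ we have $\la=-1=1$, and in characteristic $3$ the primitive cube root does not exist. In these cases the cases merge or vanish, so the statement remains correct as phrased. The fixed-space computation above is characteristic-free, which is why I prefer it to the eigenvalue argument.
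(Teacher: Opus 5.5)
\textbf{The gap.} Your cycle-by-cycle count of $\dim\ker(r-1)$ is a cleaner route than the paper's, which rules out cycle types by exhibiting two independent vectors $(r-1)v$, $(r-1)w$ and then reads off $\lambda$ from eigenvalues. However, your $\lambda=1$ branch contains a genuine error. You assert that $e_0+\dots+e_n$ lies outside $V$. Its coordinate sum is $n+1$, so this is false whenever $p=\nchar(\bbf)$ divides $n+1$. (When $p\nmid n+1$, in particular in characteristic $0$, your argument is complete as it stands.)

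The correct count for $\lambda=1$ is as follows. The fixed space of $r^+$ is spanned by the cycle sums $f_C=\sum_{i\in C}e_i$, and $s(f_C)=|C|$. Hence $\dim\ker(r-1)=c-1$ if some cycle length is nonzero in $\bbf$, but $\dim\ker(r-1)=c$ if every cycle length is divisible by $p$. In the latter situation every cycle has length at least $2$, so $n+1\ge 2c=2(n-1)$ forces $n\le 3$. This leaves exactly two possibilities your argument overlooks:
\begin{itemize}
\item $n=3$, $p=2$, $\pi$ a double transposition;
\item $n=2$, $p=3$, $\pi$ a $3$-cycle.
\end{itemize}
The first is harmless for the statement, since $-1=1$ makes it case (3). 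Even so, your argument as written wrongly declares this map not to be a reflection.

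\textbf{The characteristic-$3$ case is a counterexample.} In characteristic $3$ take $\pi=(0\ 1\ 2)$ and $\lambda=1$. Then $r$ is invertible and permutes the lines $\langle e_i-e_j\rangle$. Its fixed space is $\langle e_0+e_1+e_2\rangle\subseteq V$, of dimension $1=n-1$. So $r$ is a transvection, which is a reflection under the paper's definition: it is invertible with fixed space of codimension $1$, and the introduction only guarantees diagonalisability in characteristic $0$. It satisfies none of (1)--(4), because a field of characteristic $3$ has no primitive cube roots of unity.

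Hence your closing claim that in characteristic $3$ the statement ``remains correct as phrased'' is wrong. The proposition needs the hypothesis $\nchar(\bbf)\neq 3$, or a fifth case. The paper's own proof has the same blind spot, at the step asserting that the eigenvalues of $r$ are $\omega\lambda,\omega^2\lambda$ with $\omega$ a primitive cube root of unity in $\bbf$. Corollary~\ref{norma} is unaffected, since this $3$-cycle is a product of two transpositions.

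Once you repair the $\lambda=1$ count as above, your method is genuinely characteristic-free, and it is exactly what exposes this exception. A minor slip: the fixed vector on a cycle is $\sum_t\lambda^{t}e_{\pi^t(i)}$, not $\sum_t\lambda^{-t}e_{\pi^t(i)}$.
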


\begin{pf}
%
%
Recall that for a reflection $r$ on $V$ the space $\lset{(r-1)v}{v\in V}$ is $1$-dimensional.

First we suppose that $\pi$ is the identity permutation. In this case each of the vectors $e_i-e_j$ for $i<j$ is an eigenvector for $r$, with eigenvalue $\la$. So $r$ is a scalar multiple of the identity map on $V$, so is not a reflection, a contradiction.

So $\pi$ is a non-trivial permutation. Suppose that $(i_1\ \dots\ i_a)$ is a cycle of $\pi$ with $2\ls a\ls n$. If $a\gs4$, then the vectors $(r-1)(e_{i_1}-e_{i_2})$ and $(r-1)(e_{i_2}-e_{i_3})$ are linearly independent, so that $r$ is not a reflection. So every cycle of $\pi$ has length at most $3$.

Suppose $\pi$ has a cycle $(i\ j\ k)$ of length $3$. Then $n=2$, since otherwise we can choose a fourth integer $l\neq i,j,k$, and then the vectors $(r-1)(e_i-e_l)$ and $(r-1)(e_j-e_l)$ are linearly independent, so $r$ is not a reflection. Now the eigenvalues of $r$ are $\omega\la$ and $\omega^2\la$, where $\omega$ is a primitive cube root of unity in $\bbf$, forcing $\la=\omega$ or $\omega^2$.

Now we assume that every cycle of $\pi$ has length $1$ or $2$. Suppose there are two cycles of length $2$, say $(i\ j)$ and $(k\ l)$. Then $r$ has linearly independent eigenvectors $e_i-e_j$ and $e_k-e_l$ with eigenvalue $-\la$, forcing $\la=-1$. Now we claim that $n=3$: if not, let $m\in\{0,\dots,n\}$ be distinct from $i,j,k,l$. Then the vectors $(r-1)(e_i-e_m)$ and $(r-1)(e_k-e_m)$ are linearly independent, so that $r$ is not a reflection.

We are left with the case where $\pi$ is a transposition $(i\ j)$. Now the eigenvalues of $r$ are $-\la$ (occurring once) and $\la$ (occurring $n-1$ times). We deduce that either $\la=1$, or $\la=-1$ and $n=2$.
\end{pf}

\begin{cory}\label{norma}
The \nrg of $\mna n$ is:
\begin{enumerate}
\item
the symmetric group $\sss{n+1}$, if $n\gs4$, or if $n=3$ and $\nchar(\bbf)=2$;
\item
the hyperoctahedral group $\sss4\times C_2$, if $n=3$ and $\nchar(\bbf)\neq2$;
\item
the dihedral group $D_{12}$, if $n=2$, $\nchar(\bbf)\neq2$ and $\bbf$ contains no primitive cube roots of~$1$;
\item\label{36}
the direct product $D_{12}\times C_3$, if $n=2$, $\nchar(\bbf)\neq2$ and $\bbf$ contains primitive cube roots of~$1$;
\item
the direct product $\sss3\times C_2$ if $n=2$, $\nchar(\bbf)=2$ and $\bbf$ contains primitive cube roots of~$1$;
\item
the symmetric group $\sss3$, if $n=2$, $\nchar(\bbf)=2$ and $\bbf$ does not contain primitive cube roots of~$1$.
\end{enumerate}
\end{cory}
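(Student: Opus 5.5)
The plan is to turn the normaliser question into a computation inside $\sss{n+1}\times\bbf^\times$. Set $\Gamma$ to be the group of all $r\in\gll V$ preserving the set of lines $\lan e_i-e_j\ran$. By the discussion before \cref{mnarefs}, for $n\gs2$ each such $r$ has a unique extension $r^+$ with $r^+(e_i)=\la e_{\pi(i)}$. So I would define a map $\Phi:\Gamma\to\sss{n+1}\times\bbf^\times$, $r\mapsto(\pi,\la)$. Uniqueness of the extension makes $\Phi$ a homomorphism, and it is injective because $r$ is the restriction of $r^+$ to $V$. Conversely, every pair $(\pi,\la)$ gives an element of $\Gamma$ by restricting to $V$; this restriction is the identity only for $(\mathrm{id},1)$. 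Hence the \nrg is isomorphic to the subgroup of $\sss{n+1}\times\bbf^\times$ generated by the images of reflections. \Cref{mnarefs} lists the only candidates for these images.

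The next step checks which candidates really are reflections on $V$, using the eigenvalues of $\pi$ on the sum-zero part of the permutation module: these are the eigenvalues of $\pi$ on $V^+$ with one eigenvalue $1$ removed.
\begin{itemize}
\item $(\,(i\ j),1)$: eigenvalues $1^{n-1},-1$, so this is always a reflection. In characteristic $2$ it is unipotent but still fixes a hyperplane.
\item $(\,(i\ j),-1)$ with $n=2$: eigenvalues $-1\cdot(1,-1)=(-1,1)$, so a reflection.
\item $(\,(i\ j)(k\ l),-1)$ with $n=3$: eigenvalues $-1\cdot(1,-1,-1)=(-1,1,1)$, so a reflection.
\item $(\,(i\ j\ k),\omega)$ with $n=2$: eigenvalues $\omega\cdot(\omega,\omega^2)=(\omega^2,1)$, so a reflection.
\end{itemize}
In characteristic $2$ we have $-1=1$, so the second and third families add nothing new.

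Finally I compute the generated subgroup $H$ in each case. The transpositions with $\la=1$ generate $\sss{n+1}\times\{1\}$. For $n\gs4$, and for $n=3$ in characteristic $2$, nothing else occurs, so $H\cong\sss{n+1}$.

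For $n=3$ with $\nchar(\bbf)\neq2$, multiply $(\,(i\ j)(k\ l),-1)$ by $(\,(i\ j)(k\ l),1)\in H$ to get $(\mathrm{id},-1)\in H$. Hence $H=\sss4\times\{\pm1\}$, which is the hyperoctahedral group.

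For $n=2$ with $\nchar(\bbf)\neq2$, the same trick gives $(\mathrm{id},-1)$, so $H\supseteq\sss3\times C_2\cong D_{12}$. If $\bbf$ has a primitive cube root of unity $\omega$, then $(\,(i\ j\ k),\omega)$ times the inverse of $(\,(i\ j\ k),1)$ gives $(\mathrm{id},\omega)$. Then $H=\sss3\times\mu_6\cong D_{12}\times C_3$. The reverse inclusion holds because every generator has $\la\in\mu_6$.

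In characteristic $2$ with $n=2$, the same argument gives $H=\sss3\times\mu_3$.

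I expect the main obstacle to be this characteristic-$2$, $n=2$ bookkeeping. My computation gives a cyclic factor of order $3$ rather than $2$, which does not match the $\sss3\times C_2$ in the statement. So I would recheck this case most carefully. One point to recheck is whether $(\mathrm{id},\omega)$ acts nontrivially on $V$; it does, as $\omega\neq1$. The other is the injectivity of $\Phi$. I would then reconcile the result with the statement.
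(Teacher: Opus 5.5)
Your proposal is correct and follows the paper's route. The corollary is read off from \cref{mnarefs} by computing the subgroup generated by the listed reflections inside the group of restrictions to $V$ of the maps $e_i\mapsto\lambda e_{\pi(i)}$, which is isomorphic to $\sss{n+1}\times\bbf^\times$. Your checks that each candidate really is a reflection, and your computations in cases 1--4 and 6, are right.

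In case 5 there is nothing to reconcile: your answer $\sss3\times C_3$ is correct, and the statement's $\sss3\times C_2$ is an error.

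\begin{itemize}
\item In characteristic $2$ the reflections of type (2) in \cref{mnarefs} coincide with those of type (1), so the factor $\{\pm1\}$ disappears.
\item Type (4) carries no characteristic hypothesis. The map $(\sigma,\omega)$ is still a reflection, since its eigenvalues on $V$ are $\omega^2\neq1$ and $1$. So the central scalar $\omega$ survives.
\item Thus $D_{12}\times C_3=\sss3\times C_2\times C_3$ simply loses its $C_2$, just as the paper's own remark describes for $n=3$.
\end{itemize}

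For an independent check, take $\bbf$ of order $4$. Then $\operatorname{PGL}(V)$ acts sharply $3$-transitively on the five lines of $V$. So the setwise stabiliser in $\gll V$ of the three kernel lines has order $6\cdot3=18$. The normalising reflection group lies in this stabiliser (indeed it equals it, by your computation), so its order divides $18$ and it cannot be $\sss3\times C_2$.
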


\begin{rmks}
\begin{enumerate}[beginthm]
\item
Note that when $\bbf=\bbc$, the group $D_{12}\times C_3$ appearing in (\ref{36}) is the complex reflection group of type $G(6,2,2)$.
\item
In the case when $n=3$ and $\nchar(\bbf)=2$, the hyperoctahedral group $\sss4\times C_2$ does not have a natural faithful action. The reflections described in part (\ref{3spec}) of \cref{mnarefs} corresponding to double transpositions still exist, but they now lie inside the symmetric group $\sss4$. Effectively, passing to characteristic $2$ replaces the hyperoctahedral group with the quotient $\sss4$ obtained by quotienting out the longest element.
\end{enumerate}
\end{rmks}


Now we can give the example promised in \cref{irrsec} showing that in a \cor \srm the reflection subgroup need not be \cor. Suppose $\bbf$ has characteristic $3$, and consider the projection monoid $\mna2$. By \cref{norma} this is normalised by the reflection subgroup $\sss3$ (permuting the vectors $e_0,e_1,e_2$). Let $M$ be the \srm $\mna2\sss3$. This is irreducible (because $\mna2$ is); but $\sss3$ does not act completely reducibly on $V$ (the subspace spanned by $e_0+e_1+e_2$ is the unique proper non-zero invariant subspace).

\subsubsection{Minimal generating sets}

In the theory of reflection groups it is an important task to consider minimal generating sets of reflections. Here we describe the minimal sets of projections that generate $\mna n$. We do this using results of Howie on minimal generating sets of idempotents for the full transformation semigroup.

We begin with the extended monoid $\mnap n$ on $V^+$ described above. As we have explained, the semigroup consisting of the non-invertible elements is isomorphic to the semigroup $S$ of non-bijective functions $\{0,\dots,n\}\to\{0,\dots,n\}$, with the projections $p^+_{ij}$ corresponding to the idempotents of defect $1$. Write $f_{ij}$ for the function corresponding to $p^+_{ij}$. Given a set $X$ of ordered pairs $(i,j)$, we can construct a directed graph on $\{0,\dots,n\}$ by drawing an arrow $i\to j$ \iff $(i,j)\in X$. Howie \cite{howie2} shows that if $n\gs2$ then $\lset{f_{ij}}{(i,j)\in X}$ is a minimal generating set for $S$ \iff this graph is a strongly connected tournament (that is, for every pair $(i,j)$ there is either an arrow $i\to j$ or $j\to i$ but not both, and there is a directed path from $i$ to $j$). The classification of minimal generating sets of projections for $\mnap n$ immediately follows: the set $\lset{p_{ij}^+}{(i,j)\in X}$ is a minimal generating set \iff the corresponding graph is a strongly connected tournament.

Now we consider $\mna n$. Recall that as a monoid $\mna n$ is isomorphic to a quotient of $\mnap n$. 
We claim that under this quotient map the minimal generating sets of projections in $\mnap n$ correspond to minimal generating sets of projections in $\mna n$. First we single out the special case where $n=\nchar(\bbf)=2$, which can be checked directly. So assume that either $n\gs3$ or $\nchar(\bbf)\neq2$. Clearly if a set of projections generates $\mnap n$, then corresponding set of projections generates $\mna n$. Conversely, take a generating set $P=\lset{p_{ij}}{(i,j)\in X}$ of projections for $\mna n$, and consider the set $P^+=\lset{p_{ij}^+}{(i,j)\in X}$ of projections in $\mnap n$. Given a projection $p_{ij}^+$, its restriction $p_{ij}$ can be written as a product of elements of $P$. Let $p'$ be the corresponding product of elements of $P^+$. Then $p'$ and $p_{ij}^+$ have the same restriction to $V$. From the proof of \cref{ordermna}, and because we are assuming that either $n\gs3$ or $n=2\neq\nchar(\bbf)$, this means that $p'=p_{ij}^+$. So every projection $p_{ij}^+$ can be written as a product of elements of $P^+$, and therefore $P^+$ generates $\mnap n$.

So generating sets of projections in $\mnap n$ correspond to generating sets of projections in $\mna n$, and therefore the same is true for minimal generating sets. As a consequence of this and Howie's theorem, we deduce the following.

\begin{propn}\label{graphgen}
Suppose $n\gs2$ and $P\subseteq\lset{p_{ij}}{i\neq j}$ is a set of projections in $\mna n$. Then $P$ is a minimal generating set for $\mna n$ \iff the corresponding directed graph on $\{0,\dots,n\}$ is a strongly connected tournament.
\end{propn}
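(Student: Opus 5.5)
The plan is to deduce \cref{graphgen} from Howie's theorem \cite{howie2} for $\mnap n$, transported to $\mna n$ through the restriction map $\mnap n\to\mna n$ described above.

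First I will record two small facts. The projections $p_{ij}$ of $\mna n$ are pairwise distinct when $n\gs2$: the kernel $\lan e_i-e_j\ran$ determines the unordered pair $\{i,j\}$, and the image $\lspan{e_k-e_l}{k,l\neq i}$ then singles out $i$. So the assignment $P\mapsto P^+=\lset{p^+_{ij}}{p_{ij}\in P}$ is a well-defined bijection between sets of projections of the form $p_{ij}$ and sets of the projections $p^+_{ij}$, and it preserves inclusion. Second, since the only invertible element of either monoid is the identity, generating the monoid by projections is the same as generating the semigroup of non-invertible elements. That is exactly the setting of Howie's theorem.

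Next, in the generic case ($n\gs3$ or $\nchar(\bbf)\neq2$), I will show that $P$ generates $\mna n$ if and only if $P^+$ generates $\mnap n$. The forward implication ($P^+$ generates $\mnap n$, hence $P$ generates $\mna n$) is immediate by restricting to $V$. For the converse I will use the argument already given: write each $p_{ij}$ as a word in $P$, and lift the word to $P^+$. The proof of \cref{ordermna} shows that, in this case, $\frf f=\frf g$ forces $f=g$ unless both functions are constant, and the lift is non-constant. Hence the lift equals $p^+_{ij}$. Because the bijection $P\mapsto P^+$ preserves inclusion, minimal generating sets correspond as well. Howie's characterisation (minimal generating sets are exactly the strongly connected tournaments) then transfers verbatim to $\mna n$.

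The remaining case, $n=2=\nchar(\bbf)$, is the main obstacle, because the lifting argument genuinely fails there. The implication from a strongly connected tournament to a generating set still holds by restriction from $\mnap 2$. Minimality also holds in that direction, since every proper subset of a $3$-cycle tournament is non-generating for the reason given next.

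For the other direction I will show that a minimal generating set must be a $3$-cycle tournament, which needs two facts.
\begin{itemize}
\item Any generating set must contain $p_{ij}$ or $p_{ji}$ for each pair. In a product of projections equal to a nullity-$1$ map, the kernel is that of the rightmost factor. So some generator must have kernel $\lan e_i+e_j\ran$.
\item The two transitive tournaments on three vertices do not generate.
\end{itemize}
For the second fact I will compute directly. All $p_{ij}$ are defined over $\bbf_2$, so $\mna2$ embeds in the $16$ matrices of $\eend(\bbf_2^2)$. It then suffices to list the monoid generated by, say, $\{p_{01},p_{02},p_{12}\}$ and check that it omits some $p_{ji}$. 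By the $\sss3$-symmetry of \cref{norma}, this single computation covers every transitive tournament. Finally, I must rule out minimal generating sets with four or more elements. Any such set contains a tournament on the three vertices, and I claim it contains a $3$-cycle, which already generates and contradicts minimality. The claim needs a further finite check: a set containing both $p_{ij}$ and $p_{ji}$ for some pair, and not containing a $3$-cycle, does not generate. This is a similar small enumeration, again reduced by symmetry.
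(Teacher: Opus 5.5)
This follows the paper's argument: you transfer Howie's theorem from $\mnap n$ along the restriction map by lifting words, and handle $n=2=\nchar(\bbf)$ separately; your direct treatment of that exceptional case (pair-covering via kernels, and the failure both of transitive tournaments and of doubled-pair sets with no $3$-cycle) is correct and supplies the check the paper leaves to the reader. One slip to fix: when $\nchar(\bbf)=2$ and $n\gs3$, the proof of \cref{ordermna} also gives $\frf f=\frf g$ for certain distinct two-valued $f,g$, so the lift equals $p^+_{ij}$ because $p^+_{ij}$ takes $n\gs3$ values (hence is neither constant nor two-valued), not merely because it is non-constant.
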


\subsection{Type $B$}\label{typebsec}

\subsubsection{Definition, irreducibility and cardinality}

Now we introduce our next family of examples. Take $n\gs1$, and suppose $V$ is a vector space over $\bbf$ of dimension $n$. Choose a basis $\{e_1,\dots,e_n\}$ for~$V$, and a finite subgroup $Z\ls\bbf^\times$. Then $Z$ is the set of $t$th roots of unity in $\bbf^\times$ for some $t\gs1$.

Given $i\in\{1,\dots,n\}$, let $p_i$ be the projection with kernel $\lan e_i\ran$ and image $\lspan{e_k}{k\neq i}$. Given $i,j\in\{1,\dots,n\}$ with $i\neq j$ and given $z\in Z$, define $p_{ij}^z$ to be the projection with kernel $\lan e_i-z e_j\ran$ and image $\lspan{e_k}{k\neq i}$. Define $\mnb n^t$ to be the monoid generated by the projections $p_i$ and $p_{ij}^z$ for all $i,j,z$. Then $\mnb n^t$ is well-defined up to isomorphism.

\begin{propn}\label{mbnirr}
$\mnb n^t$ is irreducible.
\end{propn}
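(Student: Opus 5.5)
Let $W\ls V$ be a non-zero subspace invariant for $\mnb n^t$; we show $W=V$ by applying \cref{invlem} to the generating projections. For a projection $q$ in the monoid, either $\ker(q)\ls W$ or $W\ls\im(q)$. For a generator $p_i$ or $p_{ij}^z$, the image is the coordinate hyperplane $H_i=\lspan{e_k}{k\neq i}$. So for each $i$, either $W\ls H_i$, or else all of the kernels $\lan e_i\ran$ and $\lan e_i-ze_j\ran$ (for $j\neq i$, $z\in Z$) lie in $W$.

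Let $I$ be the set of indices $i$ with $W\nls H_i$. Since $\bigcap_i H_i=0$ and $W\neq0$, the set $I$ is non-empty. For every $i\in I$ we have $e_i\in W$, and also $e_i-ze_j\in W$ for all $j\neq i$. In particular, taking $z=1\in Z$ gives $e_i-e_j\in W$. Combined with $e_i\in W$, this gives $e_j\in W$ for every $j$. Hence $W$ contains the whole basis, so $W=V$.

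Only the kernel $\lan e_i\ran$ and one kernel of the form $\lan e_i-e_j\rangle$ are needed. The case $n=1$ is trivial, since then $p_1=0$ and every invariant subspace is $0$ or $V$. There is no real obstacle here: the key point is just that $1\in Z$, so the projections $p_{ij}^1$ are always available.
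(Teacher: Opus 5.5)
Your proof is correct and is essentially the paper's argument: \cref{invlem} applied to the projections $p_i$ forces some $e_i\in W$ (otherwise $W$ lies in every coordinate hyperplane and so is $0$), and the projections $p_{ij}^1$ then give every $e_j$. The only difference is cosmetic: the paper uses invariance directly via $p_{ij}^1(e_i)=e_j$, whereas you apply the lemma a second time to get $e_i-e_j\in W$.
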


\begin{pf}
Suppose $W\ls V$ is invariant for $\mnb n^t$; then we must show that $W=0$ or $W=V$.

Suppose first that $W$ contains one of the coordinate axes $\lan e_i\ran$. Then $W$ also contains $p_{ij}^1\lan e_i\ran=\lan e_j\ran$ for every $j$. Hence $W=V$.

Alternatively, suppose $W$ does not contain any of the lines $\lan e_i\ran$. Then \cref{invlem} (applied to the projections $p_1,\dots,p_n$) shows that $W$ is contained in all the hyperplanes $\lspan{e_k}{k\neq i}$, so that $W=0$.
\end{pf}

\begin{propn}\label{ordermnb}
$\mnb n^t$ is finite, with
\[
\card{\mnb n^t}=(nt+1)^n-t^nn!+1.
\]
\end{propn}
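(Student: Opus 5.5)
The plan is to identify $\mnb n^t$ explicitly as a monoid of generalised partial monomial maps and then count. Let $T$ be the set of linear maps $f:V\to V$ such that for each $i$ either $f(e_i)=0$ or $f(e_i)=ze_j$ for some $j$ and some $z\in Z$. Since $Z$ is a group, $T$ is closed under composition, so it is a monoid. Clearly $\card T=(nt+1)^n$, because each $e_i$ has $nt+1$ possible images. The invertible elements of $T$ are exactly the monomial matrices with entries in $Z$, and there are $t^nn!$ of them. The formula therefore says precisely that
\[
\mnb n^t=\{1\}\cup\lset{f\in T}{f\text{ non-invertible}},
\]
and this is what I will prove.

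\emph{The inclusion $\subseteq$.} Compute the generators directly. The projection $p_i$ sends $e_i\mapsto0$ and fixes $e_k$ for $k\neq i$. The projection $p_{ij}^z$ fixes $e_k$ for $k\neq i$, and it kills $e_i-ze_j$ while fixing $ze_j$, so $e_i\mapsto ze_j$. Hence all generators lie in $T$ and are non-invertible. A product of generators, at least one of which is singular, is singular. Thus every non-identity element of $\mnb n^t$ is a non-invertible element of $T$.

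\emph{The inclusion $\supseteq$.} This is the main obstacle: every singular $f\in T$ must be a product of generators. I would first treat the case where all scalars are $1$. Adjoin a basepoint $e_0$ standing for $0$. Singular $f\in T$ with scalars $1$ then correspond to non-bijective maps of $\{0,\dots,n\}$ fixing $0$. Under this correspondence $p_i$ is the idempotent $i\mapsto0$ and $p_{ij}^1$ is the idempotent $i\mapsto j$, both of defect $1$. The claim reduces to the statement that the non-bijective maps in the partial transformation monoid on $\{1,\dots,n\}$ are generated by defect-$1$ idempotents. I would prove this directly by Howie's argument \cite{howie}: induct on the defect and the number of non-fixed points, factoring $f$ as an idempotent of defect $1$ times an element closer to the identity. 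Alternatively, I would deduce it from Howie's theorem applied to the subsemigroup of maps fixing $0$.

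\emph{Scalars.} A general singular $f$ can be written as $f=g\circ D$, where $g$ has scalars $1$ and $D=\mathrm{diag}(z_1,\dots,z_n)$ with $z_i\in Z$. Here $D$ is not in $\mnb n^t$, so I need a "free slot" argument. Because $g$ is singular, either some $e_k$ satisfies $g(e_k)=0$, or two basis vectors $e_a,e_b$ satisfy $g(e_a)=g(e_b)$. In the first case I will realise the required scalar on each coordinate $i$ via a cycle through the spare slot: $p_{ik}^{z}$ moves $e_i$ to $ze_k$ with the desired scalar, and then $p_{ki}^{1}$-type moves bring it back. This should let me transport the coordinates one at a time with prescribed scalars, exactly as one builds permutations from transpositions using an empty slot. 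In the second case the collision provides the spare slot after first composing with a suitable $p_{ab}^{w}$. I expect the bookkeeping of this step, namely making sure the spare slot is not overwritten while scalars are applied, to be the delicate part. I would handle it by induction on the number of indices $i$ with $z_i\neq1$.

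Once both inclusions hold, the count is $(nt+1)^n-t^nn!$ singular elements plus the identity, giving $(nt+1)^n-t^nn!+1$.
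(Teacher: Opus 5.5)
Your framework is the paper's. $\mnb n^t$ lies inside the monoid $T$, your inclusion $\subseteq$ and the count are as in the paper, and everything comes down to showing that every singular element of $T$ is a product of generators. Your treatment of the scalars, writing $f=g\circ D$ and using a spare coordinate, is a genuinely different and tidier device than the paper's, which carries the scalars through its induction. It works once one detail is fixed. The product $p^1_{ki}p^{z}_{ik}$ sends $e_k$ to $e_i$, so slot $k$ must be emptied first: when $f(e_k)=0$, take $f=g\circ\bigl(\prod_{i\neq k}p^1_{ki}p^{z_i}_{ik}\bigr)\circ p_k$. If instead $f$ has no zero column but $f(e_a)=ue_c$ and $f(e_b)=ve_c$ with $a\neq b$, then $f=f'\circ p^{uv^{-1}}_{ab}$, where $f'$ agrees with $f$ except that $f'(e_a)=0$. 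This puts you back in the first case.

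The genuine gap is the scalar-one step, which your sketch does not prove. This step carries the entire weight of the proposition: it is the $t=1$ case of what the paper's double induction on $r(f)$ and $s(f)$ establishes. Your induction peels off one defect-one idempotent to reach a singular map with smaller defect or fewer non-fixed points. It already fails for $n=3$ and $f\colon e_1\mapsto 0,\ e_2\mapsto e_3,\ e_3\mapsto e_2$. In every factorisation $f=\epsilon m$ or $f=m\epsilon$ with $\epsilon\in\{p_i,p^1_{ij}\}$, the map $m$ is either the invertible transposition of $e_2,e_3$ or still moves all three indices. You have to pass through an element of the same complexity, e.g.\ $f=p^1_{13}\,g$ with $g=p^1_{32}p^1_{21}p_1$, which again moves all three indices. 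This is exactly why the paper needs its extra layer of subcases (1(b) reduced to 1(a), and 2(b) to 2(a)) and the explicit short products at the end.

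Your fallback of quoting Howie's theorem for maps of $\{0,\dots,n\}$ fixing $0$ does not work either. A factorisation in the full transformation monoid may use idempotents $0\mapsto j$, which have no counterpart in $\mnb n^t$.

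The natural repair is your own spare-slot idea, applied to the permutation part as well as the scalars:
\begin{enumerate}
\item Kill the indices with $f(e_i)=0$ using the $p_i$.
\item Merge each remaining fibre of $f$ onto one representative with suitable $p^{w}_{ba}$, $w\in Z$.
\item Some coordinate is now empty, so realise the remaining injective rearrangement by moves $p^{z}_{xy}$ into an empty slot $y$. This handles cycles as well, with scalars in $Z$ attached as needed.
\end{enumerate}
This proves generation directly for all $t$ at once and makes the reduction to scalar $1$ unnecessary.
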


\begin{pf}
It is straightforward to write down the action of the generating projections on the vectors $e_1,\dots,e_n$:
\[
p_i(e_k)=
\begin{cases}
0&(k=i)
\\
e_k&(k\neq i),
\end{cases}
\qquad
p_{ij}^z(e_k)=
\begin{cases}
z e_j&(k=i)
\\
e_k&(k\neq i).
\end{cases}
\]
As a consequence, the (spanning) set $S=\lset{z e_k}{z\in Z,\ 1\ls i\ls n}\cup\{0\}$ is mapped to itself by every element of $\mnb n^t$, so $\mnb n^t$ is finite.

To find $\card{\mnb n^t}$, we examine exactly which functions from $S$ to $S$ correspond to elements of $\mnb n^t$. Take a non-identity element $m\in\mnb n^t$. Then $m$ is singular, which means that either $m(e_i)=0$ for some $i$, or $m(e_i)=z m(e_j)$ for some $i\neq j$ and $z\in Z$. Let $f:\{e_1,\dots,e_n\}\to S$ be any function with this property; then we show that there is $m\in\mnb n^t$ extending $f$. Let $r(f)$ be the number of values $k$ such that $z e_k$ lies in the image of $f$ for some $z$; then our assumption on $f$ is the same as saying that $r(f)\ls n-1$. Let $s(f)$ be the number of values $k$ such that $f(e_k)=e_k$. Our proof proceeds by downwards induction on $r(f)$, and for fixed $r(f)$ by downwards induction on $s(f)$.

First we deal with the inductive step where $r(f)<n-1$. Choose $i$ such that no element $z e_i$ is in the image of $f$. Define a new function $g:\{e_1,\dots,e_n\}\to S$ by
\[
g(e_k)=
\begin{cases}
e_i&(k=i)
\\
f(e_k)&(k\neq i).
\end{cases}
\]
Then either $r(g)=r(f)+1$ or $r(g)=r(f)$ and $s(g)=s(f)+1$, so by induction $g$ is the restriction to $\{e_1,\dots,e_n\}$ of an element $m\in\mnb n^t$. Now consider $f(e_i)$: if $f(e_i)=0$, then $f$ is the restriction of $p_im$; alternatively, if $f(e_i)=z e_j$ for $j\neq i$, then $f$ is the restriction of $p_{ij}^z m$.

Now we consider the case where $r(f)=n-1$. In this case there is a unique $i$ such that no $z e_i$ lies in the image of $f$. Now there are two possibilities.

\begin{enumerate}
\item\label{bcase0}
Suppose there is $j$ such that $f(e_j)=0$. Then there is a unique such $j$. We consider two subcases.
\begin{enumerate}[ref=\alph*]
\item\label{bcase0a}
Suppose $i\neq j$. Then $f(e_i)=z e_l$ for some $l\neq i$ and $z\in Z$. We define a new function $g$ by
\[
g(e_k)=
\begin{cases}
e_i&(k=i)
\\
f(e_k)&(k\neq i).
\end{cases}
\]
Then $r(g)=n-1$ and $s(g)=s(f)+1$, so by induction $g$ is the restriction of an element $m\in\mnb n^t$. Now $f$ is the restriction of $p_{il}^z m$.
\item\label{bcase0b}
Suppose $i=j$. Suppose there is $l\neq i$ such that $f(e_l)\neq e_l$. Write $f(e_l)=z e_m$, and define a new function $g$ by
\[
g(e_k)=
\begin{cases}
e_i&(k=l)
\\
f(e_k)&(k\neq l).
\end{cases}
\]
Then $r(g)=n-1$ and $s(g)=s(f)$, and by subcase (\ref{bcase0a}) $g$ is the restriction of some $m\in\mnb n^t$. So $f$ is the restriction of $p_{im}^{-z}m$.

So we can assume that $f(e_l)=e_l$ for all $l\neq i$. Now $f$ is the restriction of $p_i$. 
\end{enumerate}
\item
Suppose instead that there is no $j$ for which $f(e_j)=0$. Then there is a unique pair of integers $j\neq j'$ such that $f(e_j)$ is proportional to $f(e_{j'})$
. The approach in this case is similar to case (\ref{bcase0}). Again, we consider two subcases.
\begin{enumerate}[ref=\alph*]
\item\label{bcase2a}
Suppose $i\neq j,j'$. Then $f(e_i)=z e_l$ for some $l\neq i$ and $z\in Z$, and there is no other $k$ for which $f(e_k)\in\lan e_l\ran$. Now define a new function $g$ by
\[
g(e_k)=
\begin{cases}
e_i&(k=i)
\\
f(e_k)&(k\neq i).
\end{cases}
\]
Then $r(g)=n-1$ and $s(g)=s(f)+1$, so by induction $g$ is the restriction of an element $m\in\mnb n^t$. Now $f$ is the restriction of $p_{il}^z m$.
\item
Now suppose that $i$ equals one of $j$ and $j'$, say $i=j'$. If there is $l\neq i,j$ such that $f(e_l)\neq e_l$ and $f(e_l)\notin\lan e_j\ran$, then we can proceed as in case \ref{bcase0}(\ref{bcase0b}): we write $f(e_l)=z e_m$, and define a new function $g$ by
\[
g(e_k)=
\begin{cases}
e_i&(k=l)
\\
f(e_k)&(k\neq l).
\end{cases}
\]
Then $r(g)=n-1$ and $s(g)=s(f)$, and by subcase (\ref{bcase2a}) $g$ is the restriction of some $m\in\mnb n^t$. So $f$ is the restriction of $p_{im}^z m$.

So we can assume that for every $k\neq i,j$ either $f(e_k)=e_k$ or $f(e_k)\in\lan e_j\ran$. If in fact $f(e_k)=e_k$ for all $k\neq i,j$, then $f(e_i)=z e_j$ and $f(e_j)=y e_j$ for some $y,z\in Z$. Now $f$ is the restriction of $p_{ij}^z p_{ji}^{yz^{-1}}$.

Finally, suppose there is $l\neq i,j$ such that $f(e_l)\in\lan e_j\ran$, while $f(e_k)=e_k$ for $k\neq i,j,l$. Then there are $x,y,z\in Z$ such that $f(e_i)=z e_l$, $f(e_j)=ye_l$, $f(e_l)=xe_j$. Then $f$ is the restriction of $p^z_{il}p^x_{lj}p^{yz^{-1}}_{ji}$.
\end{enumerate}
\end{enumerate}
We have proved our claim, which allows us to complete the calculation of $\card{\mnb n^t}$: there are $(nt+1)^n$ functions from $\{e_1,\dots,e_n\}$ to $S$. The non-invertible elements of $\mnb n^t$ correspond to those for which either $f(e_i)=0$ for some $i$, or $f(e_i)=z e_j$ for some $i\neq j$ and $z\in Z$. This rules out $t^nn!$ functions, so there are $(nt+1)^n-t^nn!$ non-invertible elements of $\mnb n^t$, as required.
\end{pf}

\subsubsection{Normalising reflections}

Now we find the \nrg of $\mnb n^t$. Let $Z^+$ be the set of $(2t)$th roots of unity in~$\bbf$.

\begin{propn}\label{mnbrefs}
Suppose $r$ is a reflection on $V$. Then $r$ normalises $\mnb n^t$ \iff one of the following occurs:
\begin{itemize}
\item
there is $i\in\{1,\dots,n\}$ and $\la\in Z\sm\{1\}$ such that
\[
r(e_i)=\la e_i,\qquad r(e_k)=e_k\text{ for $k\neq i$;}
\]
\item
there are $i\neq j$ and $\la\in Z^+$ such that
\[
r(e_i)=\la e_j,\qquad r(e_j)=\la^{-1} e_i,\qquad r(e_k)=e_k\text{ for $k\neq i,j$.}
\]
\end{itemize}
\end{propn}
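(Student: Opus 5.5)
The approach is to reduce normalising to a condition on the finite sets $\kerm{\mnb n^t}$ and $\imm{\mnb n^t}$, and then solve that condition for a reflection $r$ by coordinate calculations.

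\textbf{Step 1: normalising means permuting kernels and images.} I would first identify these two sets. From the proof of \cref{ordermnb}, every element $m\in\mnb n^t$ maps each $e_k$ into $S=\lset{ze_k}{z\in Z}\cup\{0\}$. Suppose $m$ has nullity $1$. Then its kernel is spanned by some $e_i$ with $m(e_i)=0$, or by some $e_i-ze_j$ with $m(e_i)=zm(e_j)$ and $z\in Z$. Its image is spanned by multiples of $n-1$ distinct basis vectors, so it is a coordinate hyperplane. Hence
\[
\kerm{\mnb n^t}=\lset{\lan e_i\ran,\ \lan e_i-ze_j\ran}{i\neq j,\ z\in Z},
\qquad
\imm{\mnb n^t}=\lset{\lspan{e_k}{k\neq i}}{1\ls i\ls n}.
\]
Every one of these already occurs among the generators. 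Since $\mnb n^t$ is finite and irreducible (\cref{mbnirr}), \cref{irrconds} says it is \cmp. So its projections are exactly the maps $\pwi KL$ with $K$ a kernel, $L$ an image and $K\nls L$. Conjugation gives $r\,\pwi KL\,r^{-1}=\pwi{rK}{rL}$. It follows that $r$ normalises $\mnb n^t$ \iff $r$ maps $\kerm{\mnb n^t}$ and $\imm{\mnb n^t}$ into themselves. For the converse direction, $r$ is injective on these finite sets, so they are permuted, and $r^{-1}$ also preserves them; this gives $r\mnb n^t r^{-1}=\mnb n^t$.

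\textbf{Step 2: a normalising reflection is monomial.} The reflection $r$ permutes the coordinate hyperplanes, hence permutes the coordinate axes (their pairwise intersections, or dually). So $r(e_i)=\la_ie_{\pi(i)}$ for some permutation $\pi$ and scalars $\la_i$. Since $r-1$ has rank $1$, I would argue as follows. If $\pi$ has two nontrivial cycles, or a cycle of length $\gs3$, then $r-1$ has rank at least $2$. So $\pi$ is either trivial or a single transposition $(i\ j)$.
\begin{itemize}
\item If $\pi$ is trivial, exactly one $\la_i$ differs from $1$.
\item If $\pi=(i\ j)$, every $\la_k$ with $k\neq i,j$ equals $1$. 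The $2\times2$ block $\fmx0{\la_j}{\la_i}0$ has eigenvalues $\pm\sqrt{\la_i\la_j}$, and one of them must be $1$. This forces $\la_i\la_j=1$, so we may write $\la_i=\la$ and $\la_j=\la^{-1}$.
\end{itemize}

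\textbf{Step 3: the condition on the lines $\lan e_i-ze_j\ran$.} The coordinate axes are automatically preserved, so it remains to check the lines $\lan e_a-ze_b\ran$. The image of such a line under $r$ is $\lan \la_ae_{\pi(a)}-z\la_be_{\pi(b)}\ran$. It lies in the kernel set \iff $z\la_b\la_a^{-1}\in Z$, that is, \iff $\la_b/\la_a\in Z$.
\begin{itemize}
\item In the diagonal case, this says the single eigenvalue $\la\neq1$ lies in $Z$.
\item In the transposition case, pairs $\{a,b\}=\{i,j\}$ require $\la^2\in Z$, i.e.\ $\la\in Z^+$. When $n\gs3$, pairs $\{i,k\}$ with $k\neq i,j$ require $\la\in Z$ itself.
\end{itemize}
Conversely, every $r$ of either listed form satisfies all of these conditions, which gives the `if' direction.

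\textbf{Main obstacle.} I expect the delicate point to be exactly this last bookkeeping. The constraint from the pairs $\{i,k\}$ (for $n\gs3$) appears to give $\la\in Z$ rather than $\la\in Z^+$. So I would recheck carefully whether the stated condition $\la\in Z^+$ is correct as written, or whether it is only correct when $n=2$, before writing the final case analysis.
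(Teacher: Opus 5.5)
Your route is the paper's route, and your computations are correct. Normalising forces $r$ to permute the kernels and the coordinate hyperplanes, hence the coordinate axes, so $r$ is monomial. The rank-one condition on $r-1$ then leaves either a diagonal matrix with one entry $\la\neq1$ or a single transposition with $\la_i\la_j=1$. Finally one tests the kernel lines.

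Your suspicion in the last paragraph is also correct: as written, the ``if'' direction is false for $n\gs3$ whenever $Z^+\neq Z$, that is, whenever $\bbf$ contains a primitive $2t$th root of unity. For a concrete counterexample, take $n=3$ and $t=1$ (so $Z=\{1\}$ and $Z^+=\{\pm1\}$) with $\nchar(\bbf)\neq2$. Let $r(e_1)=-e_2$, $r(e_2)=-e_1$, $r(e_3)=e_3$. This is a reflection of the second listed type with $\la=-1$. But $r\lan e_1-e_3\ran=\lan e_2+e_3\ran$ is not a kernel of $\mnb3^1$, so $rp_{13}^1r^{-1}\notin\mnb3^1$.

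The paper's proof misses this for two reasons. First, in the transposition case it only tests a line spanned by $e_{i_1},e_{i_2}$. It uses $\lan e_{i_1}+e_{i_2}\ran$, which is a kernel only if $-1\in Z$; your test lines $\lan e_a-ze_b\ran$ are the right ones. Second, it dismisses the ``if'' direction as easy and never checks the lines $\lan e_i-ze_k\ran$ with $k\neq i,j$. Those lines force $\la\in Z$.

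So do not try to prove the statement as written. Your argument proves the corrected version: exactly the given statement when $n=2$, and for $n\gs3$ the same statement with $Z^+$ replaced by $Z$ in the second case. Consequently, over $\bbc$ and for $n\gs3$, the \nrg of $\mnb n^t$ is $G(t,1,n)$, of order $t^nn!$. It is not $G(2t,2,n)$ as claimed after the proposition.

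Two small repairs are needed in your write-up.
\begin{itemize}
\item You must assume $n\gs2$, since your diagonal case needs an index $k\neq i$. For $n=1$ the proposition genuinely fails: $\mnb1^t=\{0,1\}$ is normalised by every reflection, i.e.\ every scalar in $\bbf^\times\sm\{1\}$.
\item In Step 2, the coordinate axes are the intersections of $n-1$ coordinate hyperplanes, not of pairs of them. These agree only when $n=3$.
\end{itemize}
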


\begin{pf}
It is easy to check that the reflections of the given types normalise $\mnb n^t$, so the `if' part is done. So we concentrate on the `only if' part, and assume that $r$ normalises $\mnb n^t$. In particular, $r$ fixes $\kerm{\mnb n^t}$ and $\imm{\mnb n^t}$.

We assume that $n\gs2$. We partition $\kerm{\mnb n^t}$ into the following sets of lines:
\begin{align*}
\call&=\lset{\lan e_i\ran}{1\ls i\ls n},
\\
\calm&=\lset{\lan e_i-z e_j\ran}{1\ls i<j\ls n,\ z\in Z}.
\end{align*}
Then $r$ preserves $\call$, since the lines in $\call$ are precisely the lines which occur as intersections of $n-1$ images of $\mnb n^t$. Let $\pi$ be the permutation of $\{1,\dots,n\}$ corresponding to the action of $r$ on the lines $\lan e_1\ran,\dots,\lan e_n\ran$.

If $\pi$ is the identity permutation, then each $e_i$ is an eigenvector for $r$. Since $r$ is a reflection, this means that there is $i\in\{1,\dots,n\}$ and $\la\neq1$ such that
\[
r(e_i)=\la e_i,\qquad r(e_k)=e_k\text{ for $k\neq i$.}
\]
Now take $j\neq i$. Then $r$ maps the line $\lan e_i+e_j\ran$ to the line $\lan \la e_i+e_j\ran$. So the latter lies in $\calm$, and therefore $\la\in Z$.

Now suppose $\pi$ is not the identity, and let $(i_1\ \dots\ i_a)$ be a cycle of $\pi$, with $a\gs2$. If $a\gs3$, then the vectors $(r-1)e_{i_1}$ and $(r-1)e_{i_2}$ are linearly independent, contradicting the assumption that $r$ is a reflection; so $a=2$. Moreover, $r(e_j)=e_j$ for every $j\neq i_1,i_2$, since otherwise $(r-1)e_{i_1}$ and $(r-1)e_j$ are linearly independent. Now there are $\la_1,\la_2\in\bbf^\times$ such that $r(e_{i_1})=\la_1e_{i_2}$ and $r(e_{i_2})=\la_2e_{i_1}$. The assumption that $r$ is a reflection then forces $\la_2=\la_1^{-1}$. Now $r$ maps the line $\lan e_{i_1}+e_{i_2}\ran$ to $\lan\la_1^{-1}e_{i_1}+\la_1 e_{i_2}\ran$. This line must lie in $\calm$, so that $\la_1^2\in Z$, and therefore $\la_1\in Z^+$.
\end{pf}

This allows us to describe the \nrg of $\mnb n^t$. If we write each element as a matrix with respect to the basis $\{e_1,\dots,e_n\}$, then the matrices obtained are precisely those for which
\begin{itemize}
\item
each row and each column contains exactly one non-zero entry, which is a $2t$th root of unity;
\item
the product of the non-zero entries is a $t$th root of unity.
\end{itemize}

If $\bbf=\bbc$, then this is precisely the complex reflection group of type $G(2t,2,n)$. In general, the \nrg has order $2^{n-1}t^nn!$ if $\bbf$ contains primitive $2t$th roots of unity, or $t^nn!$ otherwise.

\subsubsection{Minimal generating sets}

In this section we classify the minimal generating sets of projections in $\mnb n^t$. The end result is similar to \cref{graphgen}, though here we cannot rely on an external result.

Suppose we have a set $P$ of projections in $\mnb n^t$. We draw a directed graph $G_P$ on $\{1,\dots,n\}$ (where we allow multiple arrows) by drawing an arrow $i\to j$ for each $z\in Z$ such that $p_{ij}^z\in P$. Now we have the following result.

\begin{lemma}\label{gengp}
Suppose $P$ is a set of projections in $\mnb n^t$, and draw the graph $G_P$ as above. Then $P$ generates $\mnb n^t$ \iff the following conditions are all satisfied:
\begin{itemize}
\item
$p_i\in P$ for all $i$;
\item
for each $i\neq j$ and each $z\in Z$ at least one of $p_{ij}^z$ and $p_{ji}^{z^{-1}}$ lies in $P$;
\item
$G_P$ is strongly connected.
\end{itemize}
\end{lemma}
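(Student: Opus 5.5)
The plan is to prove necessity and sufficiency separately. Both directions rest on one observation about products of projections, which is the argument already used in the proof of \cref{irrconds}. If $q_1,\dots,q_k$ are projections and the product $q_1\cdots q_k$ has nullity $1$, then its kernel is $\ker(q_k)$ and its image is $\im(q_1)$. Such a product of two projections $q_1q_2$ has nullity $1$ exactly when $\ker(q_1)\nls\im(q_2)$. Write $L_i=\lspan{e_k}{k\neq i}$. The images of $\mnb n^t$ are exactly $L_1,\dots,L_n$, and a line $\lan\sum_k a_ke_k\ran$ fails to lie in $L_j$ precisely when $a_j\neq0$.

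\emph{Necessity.} Suppose $P$ generates $\mnb n^t$, and write some projection $q\in\mnb n^t$ as a product of elements of $P$. Then the last factor is a projection in $P$ with kernel $\ker(q)$. Taking $q=p_i$: the only kernel-$\lan e_i\ran$ projection in $\mnb n^t$ is $p_i$, because its image must be some $L_l$ not containing $e_i$, forcing $l=i$. Hence $p_i\in P$. Taking $q=p_{ij}^z$: its kernel $\lan e_i-ze_j\ran=\lan e_j-z^{-1}e_i\ran$ can only be paired with the images $L_i$ or $L_j$, and the resulting projections are $p_{ij}^z$ and $p_{ji}^{z^{-1}}$. So one of these lies in $P$. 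For strong connectivity, suppose $G_P$ is not strongly connected. Then there is a proper non-empty set $C\subset\{1,\dots,n\}$ with no arrows leaving $C$. Let $W=\lspan{e_k}{k\in C}$. Each generator in $P$ maps $W$ into itself: $p_k$ kills $e_k$, $p_{kl}^z$ with $k\in C$ has $l\in C$, and generators $p_{kl}^z$, $p_k$ with $k\notin C$ fix $W$ pointwise. Thus $W$ is invariant for the monoid generated by $P$. But $p_{ij}^1$ with $i\in C$ and $j\notin C$ maps $e_i$ to $e_j\notin W$, so this monoid is not $\mnb n^t$, a contradiction.

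\emph{Sufficiency.} It is enough to show that every $p_{ij}^z$ lies in the monoid $N$ generated by $P$, since the $p_i$ are already in $P$. If $p_{ij}^z\in P$ we are done, so assume $t=p_{ji}^{z^{-1}}\in P$. Choose a directed path $i=k_0\to k_1\to\dots\to k_m=j$ in $G_P$ with distinct vertices and $m\gs1$, using arrows coming from elements $p_{k_{s-1}k_s}^{z_s}\in P$. Set
\[
x=p_{k_0k_1}^{z_1}p_{k_1k_2}^{z_2}\cdots p_{k_{m-1}k_m}^{z_m}.
\]
Inductively from the right, each tail $p_{k_sk_{s+1}}^{z_{s+1}}\cdots p_{k_{m-1}k_m}^{z_m}$ has nullity $1$ and image $L_{k_s}$. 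The next kernel $\lan e_{k_{s-1}}-z_se_{k_s}\ran$ has a non-zero $e_{k_s}$-coefficient, so it is not contained in $L_{k_s}$. Hence $x$ has nullity $1$, with image $L_i$ and kernel $\lan e_{k_{m-1}}-z_me_j\ran\nls L_j=\im(t)$. Therefore $xt\in N$ has nullity $1$, kernel $\lan e_i-ze_j\ran$ and image $L_i$. As in the end of the proof of \cref{irrconds}, $xt$ acts invertibly on $L_i$, and this action has finite order because $\mnb n^t$ is finite by \cref{ordermnb}. So some power $(xt)^a$ equals $\pwi{\lan e_i-ze_j\ran}{L_i}=p_{ij}^z$, which therefore lies in $N$.

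The main obstacle is the sufficiency direction. There one cannot hope to match the scalar $z$ exactly along a path, and the point is to avoid this by combining the path product with $p_{ji}^{z^{-1}}$ and then using the finite-order trick. The remaining checks are routine: the nullity-$1$ conditions at each step, and the degenerate case $n=1$, where $P$ must simply contain $p_1$.
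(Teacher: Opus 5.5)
Your proof is correct. The sufficiency direction and the necessity of the first two conditions are exactly the paper's arguments. For sufficiency, you multiply a path product $x$ from $i$ to $j$ by $p_{ji}^{z^{-1}}$ and take a power. For necessity of the first two conditions, you use that the last factor of any factorisation of a projection has the same kernel, and that only $p_i$, respectively $p_{ij}^z$ and $p_{ji}^{z^{-1}}$, have the relevant kernels.

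The genuine difference is in showing that strong connectivity is necessary. The paper fixes $i\neq j$ and takes a shortest sequence of elements of $P$ whose product has the same kernel and image as $p_{ij}^1$ (an ``$ij$-tuple''). By minimality, no $p_k$ occurs, and consecutive factors $p_{st}^z,p_{uv}^y$ must satisfy $u=t$. The factors therefore chain as $p_{i_{a-1}i_a}^{z_a}$, and the paper reads off a directed path from $i$ to $j$.

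You instead take a non-empty proper set $C$ of vertices with no outgoing arrows. You observe that $\langle e_k : k\in C\rangle$ is preserved by every element of $P$ but not by $p_{ij}^1$ for $i\in C$, $j\notin C$. Your route is shorter and more conceptual. It says that if $G_P$ is not strongly connected then the submonoid generated by $P$ is reducible, which is incompatible with \cref{mbnirr}.

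The paper's route is more laborious but gives extra combinatorial information. A shortest product of elements of $P$ with prescribed kernel and image is necessarily a chain of projections $p_{st}^z$ tracing a directed path in $G_P$.
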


\begin{pf}
Suppose first that $P$ satisfies the three given conditions. Then we must show that the monoid generated by $P$ contains every projection in $\mnb n^t$. For the projections $p_i$ this is given, so take a projection $p_{ij}^z$, and suppose $p_{ij}^{z^{-1}}\in P$. Take a directed path $i_0\to \dots\to i_r$ in $G_P$, with $i_0=i$ and $i_r=j$. Then there are $z_1,\dots,z_r\in Z$ such that the projection $p(a)=p_{i_{a-1}i_a}^{z_a}$ lies in $P$ for $a=1,\dots,r$. Now consider the composition
\[
q=p(1)\dots p(r)p_{ji}^{z^{-1}}.
\]
This has rank $n-1$, since each projection has rank $n-1$, and $\ker(p(a))\nls\im(p(a+1))$ for each $a$, and similarly $\ker(p(r))\nls\im(p_{ji}^{z^{-1}})$. So
\[
\ker(q)=\ker(p_{ji}^{z^{-1}})=\ker(p_{ij}^z),\qquad\im(q)=\im(p(1))=\im(\ker(p_{ij}^z).
\]
Since $\mnb n^t$ is finite, some power of $q$ equals $p_{ij}^z$, so $p_{ij}^z$ lies in the monoid generated by $P$.

Conversely, suppose $P$ generates $\mnb n^t$. Then every kernel of $\mnb n^t$ must occur as the kernel of some element of $P$. To see this, take a projection $p\in\mnb n^t$, and write $p=p(1)\dots p(r)$, where $p(1),\dots,p(r)\in P$. Then $\ker(p)\gs\ker(p(r))$; but both kernels have dimension $1$, giving $\ker(p)=\ker(p(r))$, so $\ker(p)$ is the kernel of an element of $P$.

In particular this means that the projections $p_1,\dots,p_n$ all lie in $P$, since $p_i$ is the only projection in $\mnb n^t$ with kernel $\lan e_i\ran$. Furthermore, given $i\neq j$ and $z\in Z$, the only projections in $\mnb n^t$ with kernel $\lan e_i-z e_j\ran$ are $p_{ij}^z$ and $p_{ji}^{z^{-1}}$, so at least one of these lies in $P$.

It remains to show that $G_P$ is strongly connected. So take $i\neq j$; then we will show that there is a directed path from $i$ to $j$ in $G_P$. Because $P$ generates $\mnb n^t$, the projection $p_{ij}^1$ can be written as a product $p(1)\dots p(r)$, where $p(1),\dots,p(r)\in P$. Now define an \emph{$ij$-tuple} to be a tuple $(p(1),\dots,p(r))$ of elements of $P$ such that $\ker(p(1)\dots p(r))=\ker(p_{ij}^1)$ and $\im(p(1)\dots p(r))=\im(p_{ij}^1)$. (This is equivalent to saying that $\im(p(1))=\im(p_{ij}^1)$, $\ker(p(r))=\ker(p_{ij}^1)$, and $\ker(p(a))\nls\im(p(a+1))$ for $1\ls a<r$.)

Choose an $ij$-tuple $(p(1),\dots,p(r))$ with $r$ as small as possible. Then we claim that none of the $p(a)$ can be of the form $p_k$. To see this, first note that $p(r)\neq p_k$ since $\ker(p(r))=\lan e_i-e_j\ran\neq\ker(p_k)$. Next observe that if $a<r$ and $p(a)=p_k$, then $\ker(p(a))=\lan e_k\ran$, so that $\im(p(a+1))$ must equal $\lspan{e_l}{l\neq k}=\im(p(a))$; but then $(p(1),\dots,p(a-1),p(a+1),\dots,p(r))$ is also an $ij$-tuple, contradicting minimality.

So each of $p(1),\dots,p(r)$ is one of the projections $p_{st}^z$. Taking $1\ls a<r$ and writing $p(a)=p_{st}^z$ and $p(a+1)=p_{uv}^y$, the fact that $\ker(p(a))\nls\im(p(a+1))$ gives $u=s$ or $u=t$. But if $u=s$ then $\im(p(a))=\im(p(a+1))$ and again minimality is contradicted.

So we can conclude that there are $i_0,\dots,i_r$ with $i_{a-1}\neq i_a$ for each $a$, and $z_1,\dots,z_r\in Z$ such that $p(a)=p_{i_{a-1}i_a}^{z_a}$ for each $a$. The fact that $\im(p(1))=\im(p_{ij}^1)$ is the same as saying $i_0=i$, while the fact that $\ker(p(r))=\ker(p_{ij}^1)$ gives $\{i_{r-1},i_r\}=\{i,j\}$. So we have a directed path $i\to i_1\to\dots\to i_r$ in $G_P$, with either $i_{r-1}$ or $i_r$ equal to $j$, and we are done.
\end{pf}

Having classified generating sets of projections in $\mnb n^t$, we need to find which of these are minimal. For this, we need a little more graph theory. 
We do not claim originality for the following \lcnamecref{tournlem}, but it is easier to find a proof than to find a reference.

\begin{lemma}\label{tournlem}
Suppose $n\gs3$ and $G$ is a strongly connected directed graph on $\{1,\dots,n\}$ with at least one arrow $i\to j$ or $j\to i$ for all $i\neq j$. If for given $i,j$ there are arrows $i\to j$ and $j\to i$, then one of these arrows  can be deleted with the resulting graph still being strongly connected.
\end{lemma}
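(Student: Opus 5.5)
The plan is to argue by contradiction using cut structures. Suppose $G$ has arrows $i\to j$ and $j\to i$, and that deleting either one of them destroys strong connectivity. I will first turn each failure into a partition of the vertex set with a single crossing arrow. Then I will use a third vertex $k$, which exists because $n\gs3$, to reach a contradiction.

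\emph{Step 1: the cut.} Let $G'$ be $G$ with the arrow $i\to j$ removed.
\begin{itemize}
\item Every vertex still reaches $i$ in $G'$. A shortest path in $G$ ending at $i$ never passes through $i$ before its end, so it cannot use $i\to j$.
\item Every vertex is still reachable from $j$ in $G'$. A shortest path in $G$ starting at $j$ cannot use $i\to j$, since it would revisit $j$.
\end{itemize}
So if $j$ were reachable from $i$ in $G'$, then $G'$ would be strongly connected. Hence the set $A$ of vertices reachable from $i$ in $G'$ contains $i$ but not $j$. Put $B$ for its complement. By construction, the only arrow of $G$ from $A$ to $B$ is $i\to j$.

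Applying the same argument with the roles of $i$ and $j$ swapped gives a partition $C\sqcup D$. Here $j\in C$, $i\in D$, and the only arrow of $G$ from $C$ to $D$ is $j\to i$.

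\emph{Step 2: a third vertex.} Choose $k\ne i,j$. By the hypothesis on $G$ there is at least one arrow in each of the pairs $\{i,k\}$ and $\{j,k\}$. The single-crossing-arrow property forces directions, as follows.
\begin{itemize}
\item If $k\in A$, there is no arrow $k\to j$ (it would go from $A$ to $B$ and differ from $i\to j$), so $j\to k$.
\item If $k\in B$, there is no arrow $i\to k$, so $k\to i$.
\item If $k\in C$, there is no arrow $k\to i$, so $i\to k$.
\item If $k\in D$, there is no arrow $j\to k$, so $k\to j$.
\end{itemize}
Now I examine the four cases.
\begin{itemize}
\item $k\in A\cap D$: we get $j\to k$, which is an arrow from $C$ to $D$ other than $j\to i$. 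Contradiction.
\item $k\in B\cap C$: we get $i\to k$, which is an arrow from $A$ to $B$ other than $i\to j$. Contradiction.
\item $k\in B\cap D$: take a shortest path in $G$ from $i$ to $k$. It must leave $A$, so it begins with $i\to j$. Its remaining part runs from $j\in C$ to $k\in D$, so it must use $j\to i$. The path therefore revisits $i$, contradicting minimality.
\item $k\in A\cap C$: take a shortest path from $k$ to $j$. It must cross from $A$ to $B$, so it uses $i\to j$. Its portion from $k\in C$ to $i\in D$ must use $j\to i$. The path therefore visits $j$ before its end, again contradicting minimality.
\end{itemize}

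The only delicate point is Step 1: justifying that the failure of strong connectivity is witnessed by a cut whose sole crossing arrow is the deleted one. The shortest-path observations above handle this. After that, the case analysis is mechanical.
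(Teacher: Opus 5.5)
Your proof is correct, but it reaches the contradiction by a somewhat different route from the paper's.

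\textbf{The paper's route.} The paper asserts without further comment that if neither arrow can be deleted, then every path from $i$ to $j$ uses $i\to j$ and every path from $j$ to $i$ uses $j\to i$. From the resulting absence of bypasses $i\to k\to j$ and $j\to k\to i$, it sorts all the remaining vertices into two sets: a set $K$ of vertices receiving arrows from both $i$ and $j$ and sending none back, and a set $L$ of vertices sending arrows to both and receiving none. It then uses strong connectivity to find an arrow $k\to l$ with $k\in K$ and $l\in L$. This gives the explicit bypass $i\to k\to l\to j$.

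\textbf{Your route.} You encode each failure as a reachability cut ($A\sqcup B$, respectively $C\sqcup D$) crossed by exactly one arrow, and you justify this by the shortest-path observations. This makes explicit a step the paper treats as evident. It also handles parallel arrows automatically, since a second arrow $i\to j$ would make $j$ reachable in $G'$; the paper has to say separately that the two arrows are unique. Your mixed cases $A\cap D$ and $B\cap C$ do the job of the paper's exclusion of two-step bypasses. Your pure cases $A\cap C$ and $B\cap D$ are exactly its sets $K$ and $L$. However, instead of pairing a vertex of $K$ with a vertex of $L$, you rule out each pure case on its own, by combining shortest-path minimality with the two cuts. So a single third vertex suffices.

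\textbf{Trade-off.} Your argument is more self-contained and careful about the reduction to cuts, but the case analysis is longer. The paper's argument is shorter and exhibits the offending bypass path concretely.
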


\begin{pf}
Suppose there is a pair $i,j$ for which this is not true; then there must be a unique arrow $i\to j$ and a unique arrow $j\to i$ in $G$, and every directed path from $i$ to $j$ involves the arrow $i\to j$, and every directed path from $j$ to $i$ involves the arrow $j\to i$. This implies in particular that if $k\neq i,j$ then there cannot be arrows $i\to k\to j$, and there cannot be arrows $j\to k\to i$.

So the set $\{1,\dots,n\}\sm\{i,j\}$ can be partitioned as $K\sqcup L$, where for every $k\in K$ there are arrows $i\to k$ and $j\to k$ but no arrows $k\to i$ or $k\to j$, while for every $l\in L$ there are arrows $l\to i$ and $l\to j$, but no arrows $i\to l$ or $j\to l$.

Since $n\gs3$, at least one of $K$ and $L$ is non-empty; \wolog suppose $K$ is non-empty. Since $G$ is strongly connected, there must be an arrow $k\to l$ for some $k\in K$ and $l\notin K$. But then $l\in L$, and there are arrows $i\to k\to l\to j$, contrary to hypothesis.
\end{pf}

Now we can classify minimal generating sets when $n\gs3$.

\begin{propn}\label{mintypeb}
Suppose $n\gs3$ and $P$ is a set of projections in $\mnb n^t$, and draw the graph $G_P$ as above. Then $P$ is a minimal generating set for $\mnb n^t$ \iff:
\begin{itemize}
\item
$p_i\in P$ for all $i$;
\item
for each $i\neq j$ and each $z\in Z$ exactly one of $p_{ij}^z$ and $p_{ji}^{z^{-1}}$ lies in $P$;
\item
$G_P$ is strongly connected.
\end{itemize}
\end{propn}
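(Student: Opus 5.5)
The plan is to combine \cref{gengp}, which characterises generating sets, with \cref{tournlem}, which lets us remove redundant arrows. A set $P$ is a minimal generating set exactly when it generates $\mnb n^t$ and no proper subset $P\sm\{p\}$ does. By \cref{gengp} we only need to track the three conditions there, and check when deleting a single element breaks one of them.

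For the `if' direction, suppose $P$ satisfies the three stated conditions. Then $P$ satisfies the conditions of \cref{gengp}, so it generates. Removing some $p_i$ violates the first condition of \cref{gengp}. Removing some $p_{ij}^z$ leaves neither $p_{ij}^z$ nor $p_{ji}^{z^{-1}}$ in the set, because exactly one of them was present. This violates the second condition. So no proper subset generates, and $P$ is minimal.

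For the `only if' direction, suppose $P$ is a minimal generating set. The first and third conditions hold by \cref{gengp}, as does ``at least one'' in the second. Suppose for contradiction that both $p_{ij}^z$ and $p_{ji}^{z^{-1}}$ lie in $P$. These give arrows $i\to j$ and $j\to i$ in $G_P$. Every pair of distinct vertices is joined by at least one arrow, by the second condition, and $G_P$ is strongly connected. So for $n\gs3$ the hypotheses of \cref{tournlem} hold.

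The main obstacle is that $G_P$ may have multiple arrows, whereas \cref{tournlem} is phrased for a graph in which we delete ``one of these arrows''. I would handle this as follows.
\begin{itemize}
\item If there is another arrow between $i$ and $j$ besides the two coming from $p_{ij}^z$ and $p_{ji}^{z^{-1}}$, then deleting the arrow of $p_{ij}^z$ does not affect reachability at all.
\item Otherwise these are the unique arrows $i\to j$ and $j\to i$. In that case \cref{tournlem} (whose proof works verbatim for multigraphs, since it only uses the existence or absence of arrows) says we can delete one of them and keep strong connectivity.
\end{itemize}
In either case we may delete one of the two projections, say $q$, with $G_{P\sm\{q\}}$ still strongly connected. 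The second condition still holds for $P\sm\{q\}$ because the partner of $q$ remains. The first condition is untouched. So by \cref{gengp} the set $P\sm\{q\}$ generates, contradicting minimality. Hence exactly one of $p_{ij}^z$ and $p_{ji}^{z^{-1}}$ lies in $P$.
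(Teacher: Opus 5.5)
Your proof is the same as the paper's: \cref{gengp} gives the `if' direction directly, and for the converse \cref{tournlem} lets you discard one member of a pair $p_{ij}^z$, $p_{ji}^{z^{-1}}$ when both lie in $P$. One step is wrong as written, though.

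In your first bullet, the extra arrow between $i$ and $j$ may run $j\to i$, coming from some $p_{ji}^w$ with $w\neq z^{-1}$. In that case deleting $p_{ij}^z$ can remove the only arrow $i\to j$ and destroy strong connectivity. For example, take $n=3$, $Z=\{\pm1\}$ and
$P=\{p_1,p_2,p_3,p_{12}^1,p_{21}^1,p_{21}^{-1},p_{31}^{\pm1},p_{23}^{\pm1}\}$. This $P$ satisfies the conditions of \cref{gengp}, with both $p_{12}^1$ and $p_{21}^1$ present. But removing $p_{12}^1$ leaves vertex $1$ with no outgoing arrow.

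The fix is to delete whichever of $p_{ij}^z$, $p_{ji}^{z^{-1}}$ has its arrow duplicated by a parallel one. By the symmetry $(i,j,z)\leftrightarrow(j,i,z^{-1})$ you may assume this is $p_{ij}^z$.

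The case split is in fact unnecessary. The paper's proof of \cref{tournlem} already begins by observing that a failure forces a unique arrow in each direction between $i$ and $j$. So the lemma as stated covers multiple arrows and can be applied directly, exactly as in the paper's proof.
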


\begin{pf}
Suppose $P$ satisfies the three given conditions. Then by \cref{gengp} $P$ is a generating set for $\mnb n^t$. Furthermore, $P$ is minimal, since if any element of $P$ is deleted, then (again by \cref{gengp}) the resulting set does not generate $\mnb n^t$.

Conversely, suppose $P$ is a generating set but does not satisfy all three given conditions. Then there must be some $i,j,z$ such that both $p_{ij}^z$ and $p_{ji}^{z^{-1}}$ lie in $P$. Now we claim that we can remove one of $p_{ij}^z$ and $p_{ji}^{z^{-1}}$ from $P$ with the resulting set still being a generating set. The graph $G_P$ satisfies the hypotheses of \cref{tournlem}, and includes arrows $i\to j$ and $j\to i$. So by \cref{tournlem} one of these arrows, say an arrow from $i\to j$, can be deleted from $G_P$ with the resulting graph $G_P^-$ still being strongly connected. Now let $Q=P\sm\{p_{ij}^z\}$. Then $G_Q=G_P^-$ is strongly connected, so that $Q$ is a generating set and $P$ is not minimal.
\end{pf}

\section{Classification of irreducible \prms on $\bbc^2$}\label{classsec}

Our aim in this section is to classify finite irreducible \prms on $\bbc^2$. In fact, our results yield a classification over any field of characteristic zero, but for simplicity we will work entirely with $\bbc$ from \cref{someexsec} onwards.

\subsection{The trace group}

In this subsection we work over an arbitrary field $\bbf$. Suppose $M$ is a \prm on $\bbf^2$, take two non-identity elements $p,q\in M$, and suppose $\im(p)\neq\ker(q)$. Then $pq$ equals $\tr{pq}$ times the projection $\pwi{\ker(q)}{\im(p)}$. So if $a\in\bbn$, then $(pq)^a$ equals $\tr{pq}^a$ times $\pwi{\ker(q)}{\im(p)}$. Since $M$ is finite, this means that $\tr{pq}$ is zero or a root of unity in $\bbf$; we will call this the ``trace condition''. If we define the \emph{trace group} of $M$ to be the group $G$ generated by all the non-zero traces of the non-invertible elements of $M$, then $G$ is a finite subgroup of $\bbf^\times$.

The next result shows how the trace group enables us to find the cardinality of an irreducible \prm on $\bbf^2$.

\begin{propn}\label{cardprm}
Suppose $M$ is an irreducible \prm on $\bbf^2$. Then the number of non-zero elements in $M$ is $kl\card G+1$, where $k=\card{\kerm M}$, $l=\card{\imm M}$ and $G$ is the trace group of $M$. Furthermore, $0$ is an element of $M$ \iff $\kerm M\cap\imm M\neq\emptyset$.
\end{propn}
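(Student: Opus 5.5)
Throughout I assume, as in the rest of this section, that $M$ is finite, so that \cref{irrconds} applies. Every non-identity element of $M$ is a product of projections on $\bbf^2$, so it has rank at most $1$. The plan is to parametrise the rank-$1$ elements by triples (kernel, image, scalar) and to show that the set of such triples is exactly $\kerm M\times\imm M\times G$.

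First, the kernels and images. If $x=p_1\cdots p_r$ with the $p_i$ projections and $x$ of rank $1$, then $\ker(x)\gs\ker(p_r)$ and $\im(x)\ls\im(p_1)$. All of these are lines, so $\ker(x)=\ker(p_r)\in\kerm M$ and $\im(x)=\im(p_1)\in\imm M$. Conversely, I need an element for every pair $(K,L)\in\kerm M\times\imm M$.
\begin{itemize}
\item If $K\neq L$, the element $\pwi KL$ lies in $M$ by the completeness part of \cref{irrconds}.
\item If $K=L$, I choose some $L'\in\imm M$ with $L'\neq K$, which exists because the images have zero intersection. I also choose some $K'\in\kerm M$ with $K'\neq K$, which exists because the kernels sum to $\bbf^2$. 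Then $\pwi{K'}{K}\,\pwi{K}{L'}$ has rank $1$, kernel $K$ and image $K$.
\end{itemize}
So every pair occurs.

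Second, the scalars. For $K\neq L$, a rank-$1$ map with kernel $K$ and image $L$ is $\la\,\pwi KL$ with $\la$ equal to its trace. So $\la$ lies in $G$ by the trace condition. For $K=L$ the map is nilpotent, so I fix one reference element $n_K\in M$ and write all others as $\la n_K$. In both cases I show that the set $\Lambda_{K,L}$ of scalars that occur is a single $G$-coset; after renormalising $n_K$, it is then exactly $G$.

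Closure under $G$ comes from multiplying by elements of trace $g$. Left multiplication of $x$ by $y=g\,\pwi{K''}{L}$, where $K''\neq L$, gives $yx=gx$. Such a $y$ exists for any generator $g=\tr{pq}$ of $G$: the product $pq$ is $g\,\pwi{\ker q}{\im p}$, and completeness lets me move its kernel and image onto any prescribed pair.

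For the reverse inclusion I take two elements $x,x'$ with the same kernel and image. I factor $x'=p_1\cdots p_r$ and track scalars step by step: each consecutive product of rank-$1$ pieces contributes a trace factor lying in $G$. This shows that $x'$ and $x$ differ by an element of $G$.

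I expect this comparison to be the main obstacle, especially in the nilpotent case $K=L$, where there is no canonical normalisation. If direct tracking becomes messy, I would instead sandwich: pick a projection $s$ with $\ker s\neq L$ and $\im s\neq K$, and compute $x\,s\,x'$. This product is a nonzero multiple of both $x$ and $x'$, and the multipliers can be expressed via traces of products in $M$, hence lie in $G$.

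Counting then gives $kl\card G$ non-identity non-zero elements, plus the identity, which is $kl\card G+1$.

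For the statement about $0$: if $K\in\kerm M\cap\imm M$, take a projection $q$ with kernel $K$ and a projection $p$ with image $K$; then $qp=0$. Conversely, suppose a product of projections is $0$, and look at the first point in the product where the rank drops to $0$. There a rank-$1$ element $u$ is multiplied by a rank-$1$ element $v$ with $\ker(u)=\im(v)$. By the first step this line lies in $\kerm M\cap\imm M$.
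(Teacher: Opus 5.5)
Your proposal is correct and has the same two-part structure as the paper. First, every pair in $\kerm M\times\imm M$ is realised by an element of $M$. Second, on each such fibre the admissible scalars are exactly $G$. The central step, however, is argued differently.

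\textbf{The paper's route.} It sets $G_p=\{g: gp\in M\}$ for projections $p$. It proves $G_m=G$ by comparing $mrpsm=hm$ with $mr(gp)sm=ghm$ and cancelling $h$, which uses that $G_m$ is a finite group. Nilpotent elements are then handled by an injection-and-counting argument.

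\textbf{Your route.} You transport a scaled projection $g\,\pwi{K_0}{L_0}$ directly. This is legitimate, but only because of two identities. With equal kernels the left factor survives, $\pwi{K_0}{L}\,\pwi{K_0}{L_0}=\pwi{K_0}{L}$. With equal images the right factor survives, $\pwi{K_0}{L_0}\,\pwi{K}{L_0}=\pwi{K}{L_0}$. So the image or kernel changes while the scalar $g$ does not. Multiplying by an arbitrary projection of $M$ would introduce an extra trace factor.

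You should say this explicitly, because this transport is the real content of the proposition. The reverse inclusion, which you flagged as the main obstacle, is easy. For $K\neq L$ it is automatic, since the scalar is the trace. In the nilpotent case your sandwich gives $x'=(\tr{x's}/\tr{xs})\,x$ at once.

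Your route is more direct and needs no cancellation argument, but it requires a configuration check. If the kernel of your starting element equals the target image, you must pass through an auxiliary kernel or image outside $\{K_0,L_0\}$. Such a line exists, because otherwise $\kerm M=\imm M=\{K_0,L_0\}$ and $K_0$ would be invariant.

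The same irreducibility remark is needed in two places where you omit it.
\begin{itemize}
\item When realising the pair $(K,K)$, you must choose $K'\neq L'$; otherwise $\pwi{K'}{K}\,\pwi{K}{L'}=0$.
\item The projection $s$ in your sandwich must satisfy $\ker s\neq K$ and $\im s\neq K$ simultaneously.
\end{itemize}
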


\begin{pf}
We prove two claims.

\clamno
If $K\in\kerm M$ and $L\in\imm M$, then there is an element of $M$ with kernel $K$ and image~$L$.
\prof
In the case where $K\neq L$ this follows from \cref{irrconds}: $M$ must contain $\pwi KL$. Even in the case where $K=L$, the argument from the proof of \cref{irrconds} still works to show the existence of an element with kernel and image both equal to $K$.
\malc

\clamno
If $m$ is a non-zero non-invertible element of $M$ and $g\in\bbf^\times$, then $gm\in M$ \iff $g\in G$.
\prof
For each projection $p\in M$, let $G_p=\lset{g\in\bbf^\times}{gp\in M}$. Then $G_p\ls G$. Since any non-invertible element of $M$ with non-zero trace $t$ equals $t$ times a projection, the trace group $G$ is the union of the groups $G_p$ for all projections $p\in M$.

To prove the claim, first consider the case where $m$ is a projection; then we want to show that $G_m=G$. If $G_m<G$, then there is an element $g\in G\sm G_m$ and another projection $p$ such that $g\in G_p$. Because $M$ is irreducible, it has at least two images and at least two kernels, so we can choose a kernel $K\neq\im(p)$ and an image $L\neq\ker(m)$. By Claim 1 we can find an element $r\in M$ with kernel $K$ and image $L$. Similarly, we can find a non-zero element $s$ with $\ker(s)\neq\im(m)$ and $\im(s)\neq\ker(p)$. This means that the element $mrpsm$ is non-zero, and hence has image $\im(m)$ and kernel $\ker(m)$, so equals $hm$ for some $h\in G_m$. Additionally, $M$ contains the element $mr(gp)sm$, which equals $ghm$. But then $gh\in G_m$, so that $g\in G_m$, a contradiction. So $G_m=G$.

Next consider the case where $m$ is a scalar multiple of a projection; say $m=gm'$ for some projection $m'\in M$. The argument in the paragraph shows that the claim holds for $m'$, which immediately shows that it holds for $m$.

Finally consider the case where $m$ is not a scalar multiple of a projection; this means that $\im(m)=\ker(m)$. Let $u\in M$ be any projection with $\im(u)=\im(m)$, and let $v\in M$ be a projection with $\ker(v)\neq\im(m)\neq\im(v)$. Then $um$ is a non-zero scalar multiple of $m$. More generally, $gum$ is a non-zero scalar multiple of $m$ for any $g\in G$, so there are at least $\card G$ different multiples of $m$ in $M$. On the other hand, if $g\in\bbf^\times$ with $gm\in M$, then $gmv$ is a scalar multiple of the projection $r=\pwi{\ker(v)}{\im(m)}$. So (because the claim holds with $r$ in place of $m$) there are most $\card G$ scalar multiples of $m$ in $M$. So equality holds, and the scalar multiples of $m$ in $M$ are precisely the elements $gum$ for $g\in G$. One of these is $m$ itself, and therefore these scalar multiples are precisely the elements $gm$ for $g\in G$.
\malc
Now we use these two claims to prove the first statement in the \lcnamecref{cardprm}. Suppose $m$ is a non-zero non-invertible element of $M$. Then $M$ has rank $1$, so $\ker(m)$ and $\im(m)$ are both $1$-dimensional, and $\ker(m)\in\kerm M$ and $\im(m)\in\imm M$. On the other hand, given $K\in\kerm M$ and $L\in\imm M$, Claims 1 and 2 show that there are exactly $\card G$ elements of $M$ with kernel $K$ and image $L$. So there are $kl\card G$ non-zero non-invertible elements in $M$, which gives the result.

The second statement in the \lcnamecref{cardprm} is straightforward to prove: suppose $p_1,\dots,p_r$ are projections in $M$. Then $p_1\dots p_r=0$ \iff $\ker(p_i)=\im(p_{i+1})$ for some $i$. So it possible to write $0$ as a product of projections in $M$ \iff $\kerm M\cap\imm M\neq\emptyset$.
\end{pf}

The next result makes it much easier to determine the trace group. Recall the \prm $\mna2$ from \cref{typeasec}; this can be characterised as the unique (up to equivalence) \cmp \prm $M$ with $\kerm M=\imm M$ and $\card{\kerm M}=3$.

\begin{propn}\label{pwisetrace}
Suppose $M$ is a finite irreducible \prm on $\bbf^2$, and let $P$ be the set of projections in $M$. If $M$ is not equivalent to $\mna2$, then the trace group of $M$ is generated by the non-zero elements of the set
\[
\lset{\tr{pq}}{p,q\in P}.
\]
\end{propn}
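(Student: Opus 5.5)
Write $H$ for the subgroup of $\bbf^\times$ generated by the non-zero values $\tr{pq}$ with $p,q\in P$. Clearly $H$ is contained in the trace group $G$, so the work is to show that every non-zero trace of a non-invertible element of $M$ lies in $H$. Every such element is a product $p_1\dots p_r$ of projections in $P$. Since $\dim V=2$, I will write each projection as a rank-one map $p=u_p\otimes\varphi_p$, with $\im(p)=\lan u_p\ran$, $\ker(p)=\ker\varphi_p$ and $\varphi_p(u_p)=1$. Put $c(p,q)=\varphi_p(u_q)$; this vanishes exactly when $\im(q)=\ker(p)$. Then $p_1\dots p_r=c(p_1,p_2)\cdots c(p_{r-1},p_r)\,u_{p_1}\otimes\varphi_{p_r}$, so
\[
\tr{p_1\dots p_r}=\prod_{i=1}^{r}c(p_i,p_{i+1})\qquad(\text{indices mod }r).
\]
The product $c(p,q)c(q,p)=\tr{pq}$ does not depend on the choice of normalisation. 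The plan is to prove, by induction on $r$, that every non-zero cyclic product of this kind lies in $H$. The cases $r\ls2$ are immediate.

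The basic shortening step uses completeness (\cref{irrconds}). Suppose $c(p_1,p_2)\neq0$ and $c(p_2,p_1)\neq0$, i.e.\ $\tr{p_1p_2}\neq0$. Then $\ker(p_2)\neq\im(p_1)$, so $s=\pwi{\ker(p_2)}{\im(p_1)}$ lies in $M$, and we may take $s=u_{p_1}\otimes\varphi_{p_2}/\varphi_{p_2}(u_{p_1})$. A direct check gives $c(s,p_3)=c(p_2,p_3)/c(p_2,p_1)$ and $c(p_r,s)=c(p_r,p_1)$. Hence
\[
\tr{p_1\dots p_r}=\tr{p_1p_2}\cdot\tr{sp_3\dots p_r},
\]
which is an $(r-1)$-cycle multiplied by an element of $H$. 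So induction handles every cycle that has some consecutive pair $p_i,p_{i+1}$ with $\tr{p_ip_{i+1}}\neq0$.

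The remaining, degenerate, case is a cycle with all $c(p_i,p_{i+1})\neq0$ but every $c(p_{i+1},p_i)=0$, i.e.\ $\ker(p_{i+1})=\im(p_i)$ for all $i$. I expect this to be the main obstacle, and it is where the exclusion of $\mna2$ must enter. Here I would insert auxiliary projections. For any $q\in P$ with $c(p_i,q)$ and $c(q,p_{i+1})$ non-zero we have
\[
\tr{p_1\dots p_r}=\tr{p_1\dots p_iqp_{i+1}\dots p_r}\cdot\frac{c(p_i,p_{i+1})}{c(p_i,q)c(q,p_{i+1})}.
\]
The correction factor is $\tr{p_iqp_{i+1}}^{-1}$ multiplied by the non-degenerate 2-cycle $\tr{p_{i+1}p_i\cdot}$-type terms. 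I would choose $q$, via completeness, so that the new cycle and the 3-cycle $p_i\,q\,p_{i+1}$ both have a consecutive pair with non-zero trace, and therefore reduce by the previous paragraph. Such a $q$ should exist as soon as $\kerm M\cup\imm M$ contains a line outside the lines $\im(p_i)=\ker(p_{i+1})$ already in play, or as soon as two consecutive images differ from the kernels in a usable way. I would first reduce to the case $r=3$ with three distinct lines $L_1,L_2,L_3$ forming both the kernels and the images. If $M$ had no further kernels or images, then $\kerm M=\imm M$ would have three elements, and completeness would force $M$ to be equivalent to $\mna2$ by the characterisation quoted before the statement. Otherwise an extra line exists, and I would use it to produce the required $q$. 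Checking that this choice always works (including when the extra line is only a kernel, or only an image) is the delicate part, and I would handle it by case analysis on whether the extra line lies in $\kerm M$ or in $\imm M$, using \cref{irrconds} and Claim~1 of \cref{cardprm}.
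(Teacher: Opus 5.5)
Your shortening step for a cycle that has a consecutive pair with non-zero trace is correct. It is the trace-level version of the first two cases of the paper's proof. The gap is in the degenerate case, which is exactly where the hypothesis on $\mna2$ must be used, and the mechanism you sketch there does not work.

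If $\ker(p_{i+1})=\im(p_i)$ then $c(p_{i+1},p_i)=0$. Hence $\tr{p_{i+1}p_i}=0$, and $\tr{p_iqp_{i+1}}=c(p_i,q)c(q,p_{i+1})c(p_{i+1},p_i)=0$ for every $q$. So the correction factor $c(p_i,p_{i+1})/\bigl(c(p_i,q)c(q,p_{i+1})\bigr)$ is not $\tr{p_iqp_{i+1}}^{-1}$ times non-zero traces, and reducing the 3-cycle $p_iqp_{i+1}$ yields nothing.

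For a general $q$, this factor is the scalar relating the two nilpotent elements $p_ip_{i+1}$ and $p_iqp_{i+1}$ of $M$. Claim~2 in the proof of \cref{cardprm} puts it in the trace group $G$, but putting it in $H$ is precisely what is being proved, so the argument is circular.

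There are two further problems. Inserting $q$ lengthens the cycle to $r+1$, and one shortening only returns it to length $r$, possibly still degenerate, so your induction on $r$ does not close. You also give no mechanism for the proposed reduction to $r=3$. Degenerate cycles of length $4$ on four distinct lines occur, for example $\pwi{D}{A}\,\pwi{A}{B}\,\pwi{B}{C}\,\pwi{C}{D}$ in $\ci5$.

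The missing idea, which is the paper's, is to insert a projection that changes nothing. Take $q=\pwi{\ker(p_i)}{L}$. Then $p_iq=p_i$, that is, $c(p_i,q)c(q,x)=c(p_i,x)$ for all $x$, so the correction factor is exactly $1$.

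Now choose $L$ to be an image different from $\im(p_i)$, $\ker(p_i)$ and $\im(p_{i+1})$. Then $\tr{qp_{i+1}}\neq0$, and shortening gives $s=\pwi{\ker(p_{i+1})}{L}$. The pair $(s,p_{i+2})$ is non-degenerate because $\ker(p_{i+2})=\im(p_{i+1})\neq L$, so a second shortening reaches length $r-1$.

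Such an $L$ exists. Take $L=\im(p_{i+2})$ if this differs from $\ker(p_i)$. Otherwise $\im(p_i)$, $\ker(p_i)$ and $\im(p_{i+1})$ are three distinct lines, each of which is both a kernel and an image. Since $M$ is not equivalent to $\mna2$, there is a fourth line in $\kerm M\cup\imm M$. If it is an image, use it as $L$. If it is only a kernel $K$, use the dual insertion of $\pwi{K}{\im(p_{i+1})}$ immediately before $p_{i+1}$, which is absorbed by $p_{i+1}$.

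The paper carries this out on products rather than traces. It shows that every non-zero $tuv$ equals an element of $H$ times a product of two projections, via $tuv=truv$ with $r=\pwi{\ker(t)}{L}$.
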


\begin{pf}
In this proof we use the following notation. Given $p,q\in P$ with $\im(p)\neq\ker(q)$, we write $\sx pq$ for the projection $\pwi{\ker(q)}{\im(p)}$. By \cref{irrconds} $M$ is \cmp, and therefore $\sx pq\in P$. Moreover, $pq=\tr{pq}\sx pq$.

Let $S$ be the set given in the \lcnamecref{pwisetrace}. Since any element of $M$ can be written as a product of projections, it suffices to show that any non-zero element $tuv$ with $t,u,v\in P$ can be written in the form $xrs$, where $r,s\in P$ and $x$ is a product of elements of $S$.

The assumption that $t,u,v$ are projections and that $tuv\neq0$ immediately gives
\[
\im(t)\neq\ker(t)\neq\im(u)\neq\ker(u)\neq\im(v)\neq\ker(v).
\]

First suppose $\im(t)\neq\ker(u)$. Then $tuv$ equals $\tr{tu}\sx tuv$, which is what we want. Similarly, if $\im(u)\neq\ker(v)$, then $tuv$ equals $\tr{uv}t\sx uv$, which is what we want.


We are left with the case where $\im(t)=\ker(u)$ and $\im(u)=\ker(v)$. Now we claim that either: 
\begin{enumerate}[label=(\alph*)]
\item
there is an image $L$ of $M$ which is different from $\im(t),\ker(t),\im(u)$, or
\item
there is a kernel $K$ of $M$ which is different from $\ker(u),\im(v),\ker(v)$.
\end{enumerate}
To see this, first suppose that $\ker(t)\neq\im(v)$. Then (a) is satisfied with $L=\im(v)$. Alternatively, if $\ker(t)=\im(v)$, then we have three different lines $\im(t),\ker(t),\im(u)$ each of which is both a kernel and an image of $M$. The assumption that $M$ is not equivalent to $\mna2$ means that there is a fourth line $L$ which is either a kernel or an image of $M$, and so either (a) or (b) is true.

We assume \wolog that (a) is true, and let $r$ be the projection with image $L$ and kernel $\ker(t)$, and $s$ the projection with image $L$ and kernel $\ker(v)$. Then $tr=t$ because $t$ and $r$ have the same kernel, so that
\[
tuv=truv=\tr{ru}t\sx ruv=\tr{ru}\tr{\sx ruv}ts,
\]
which is what we want.
\end{pf}


We remark that the exception in \cref{pwisetrace} for monoids equivalent to $\mna2$ is necessary if $\nchar(\bbf)\neq2$: if $p,q\in\mna2$ are projections, then $\tr{pq}\in\{0,1\}$, but in fact $\mna2$ has trace group $\{\pm1\}$.

\subsection{Some examples in $\bbc^2$}\label{someexsec}

For the rest of \cref{classsec} we concentrate on the case $\bbf=\bbc$. In this subsection  we describe the \prms which will appear in the classification. Each \prm $M$ described will be \cmp, which means that we can specify it by giving $\kerm M$ and $\imm M$. In each case we also give the trace group (which will yield $\card M$ via \cref{cardprm}) and the \nrg of $M$. We also make comments about duality.

We will identify linear endomorphisms of $\bbc^2$ with $2\times2$ matrices (acting on $\bbc^2$ on the left). Given a non-zero vector $\binom xy\in\bbc^2$, we'll write $\spn xy$ for the line it spans.

\subsubsection*{\Prms with only two images}

First, let $S$ be a finite set of roots of unity in $\bbc$ such that $1\in S$, and choose $i\in\{0,1,2\}$ such that $i+\card S\gs2$. Define the \cmp \prms $\as Si$ as follows.
\[
\begin{array}{ccc}\hline
i&\kerm{\as Si}&\imm{\as Si}\\\hline
\\[-10pt]
0&\lset{\spn1s}{s\in S}&\sett{\spn10,\spn01}\\[8pt]
1&\lset{\spn1s}{s\in S}\cup\sett{\spn10}&\sett{\spn10,\spn01}\\[8pt]
2&\lset{\spn1s}{s\in S}\cup\sett{\spn10,\spn01}&\sett{\spn10,\spn01}\\[6pt]\hline
\end{array}
\]
Note that in the case where $S$ is the group of all $2m$th roots of unity, $\as S2$ is just the monoid $\mnb2^m$ from \cref{typebsec}.

It is easy to see from \cref{invlem} (with the assumption that $i+\card S\gs2$) that $\as Si$ is irreducible.

\begin{propn}\label{asi}
The trace group of $\as Si$ is the subgroup of $\bbc^\times$ generated by $S$, and
\[
\card{\as Si}=
\begin{cases}
2\card S\card G+1&(i=0)\\
2(\card S+1)\card G+2&(i=1)\\
2(\card S+2)\card G+2&(i=2).
\end{cases}
\]
\end{propn}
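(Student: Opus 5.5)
The plan is to compute the trace group with \cref{pwisetrace} and then read off $\card{\as Si}$ from \cref{cardprm}. Since $\as Si$ has only two images, it is not equivalent to $\mna2$, which has three images. Finiteness is not quite automatic, because \cref{pwisetrace} assumes it. I would get it from the reduction in the proof of \cref{pwisetrace}, which uses only completeness: every non-zero element is $x\,rs$ with $r,s$ projections and $x$ a product of traces $\tr{pq}$. Once we know these traces lie in the finite group $\lan S\ran$ generated by $S$, there are only finitely many such elements, so the monoid is finite. Irreducibility has already been noted via \cref{invlem}.

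Next I would write the projections explicitly. The kernels are $\spn1s$ for $s\in S$, possibly together with $\spn10$ and $\spn01$, and the images are $\spn10$ and $\spn01$. The matrices are
\[
\pwi{\spn1s}{\spn10}=\fmx1{-s^{-1}}00,\qquad
\pwi{\spn1s}{\spn01}=\fmx00{-s}1,
\]
together with the coordinate projections $\fmx1000$ and $\fmx0001$ (for $i=2$ both occur, for $i=1$ only $\pwi{\spn10}{\spn01}=\fmx0001$).

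Then I would compute $\tr{pq}$ for every pair of projections $p,q$, splitting into cases by whether the images of $p$ and $q$ agree.
\begin{enumerate}
\item If $\im(p)=\im(q)$, then $pq=q$, so $\tr{pq}=1$.
\item If $p=\pwi{\spn1s}{\spn10}$ and $q=\pwi{\spn1{s'}}{\spn01}$, a direct multiplication gives $\tr{pq}=\tr{qp}=s'/s$.
\item If either $p$ or $q$ is a coordinate projection and their images differ, the trace is $0$. This is a quick check; for example, $\fmx1{-s^{-1}}00\fmx0001$ has zero diagonal.
\end{enumerate}
So by \cref{pwisetrace} the trace group $G$ is generated by the ratios $s'/s$ with $s,s'\in S$. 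Since $1\in S$, this group contains every $s\in S$ and is contained in $\lan S\ran$, so $G=\lan S\ran$.

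For the cardinality, \cref{cardprm} gives $kl\card G+1$ non-zero elements, where $l=2$ and $k=\card S+i$. The lines $\spn1s$ are distinct from one another and from the coordinate axes, since $s\neq0$. By the second part of \cref{cardprm}, $0\in\as Si$ exactly when some kernel is also an image, that is when $i\gs1$. Adding $1$ in that case yields the three stated formulas.

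The only real care needed is in case 3 above: checking that no mixed product with a coordinate projection gives a new trace, and that these products do not contribute anything outside $\lan S\ran$.
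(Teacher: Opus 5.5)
Your proposal is correct and follows essentially the same route as the paper. You list the projections explicitly, check that traces of pairwise products are $0$, $1$ or ratios $s'/s$, and apply \cref{pwisetrace} using $1\in S$ to get $G=\langle S\rangle$. You then read off the order from \cref{cardprm} with $l=2$, $k=|S|+i$, and $0\in X_S^{(i)}$ if and only if $i\geq 1$. One addition goes beyond the paper: you deduce finiteness from the completeness-only reduction in the proof of \cref{pwisetrace}, which fills a point the paper leaves implicit.
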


\begin{pf}
To find the trace group, we exploit \cref{pwisetrace}. The projections in $\as Si$ are of the form
\[
\fmx1{-s^{-1}}00,\quad\fmx1000,\quad\fmx00{-s}1,\quad\fmx0001
\]
for $s\in S$, and we can check directly that the trace of any product of two matrices of this form is $0$ or $st^{-1}$ for some $s,t\in S$. Since $1\in S$ by assumption, the non-zero traces generate the same subgroup of $\bbc$ as $S$ does.

Now the second part of the \lcnamecref{asi} follows from \cref{cardprm}.
%
\end{pf}

To find the \nrg of $\as Si$, we introduce the group
\[
G_S=\lset{g\in\bbc^\times}{gs\in S\text{ for all }s\in S}.
\]
$G_S$ is then the group of $m$th roots of unity in $\bbc$, for some $m\in\bbn$.

\begin{propn}\label{asinorm}
\begin{enumerate}[beginthm]
\item\label{hardasinorm}
If $i=0$ or $2$ and there is $t\in\bbc$ such that $t^2s^{-1}\in S$ for all $s\in S$, then the \nrg of $\as Si$ is
\[
\lset{\fmx g00h,\fmx {tg}00{t^{-1}h},\fmx0gh0,\fmx0{tg}{t^{-1}h}0}{g,h\in G_S}.
\]
This is conjugate to the complex reflection group of type $G(2m,2,2)$.
\item\label{basicasinorm}
If $i=1$ or there is no such $t$, then the \nrg of $\as Si$ is
\[
\lset{\fmx g00h}{g,h\in G_S},
\]
which is isomorphic to $C_m\times C_m$.
\end{enumerate}
\end{propn}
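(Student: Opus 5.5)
Since $\as Si$ is \cmp, it is determined by $\kerm{\as Si}$ and $\imm{\as Si}$. So a reflection $r\in\gll{\bbc^2}$ normalises $\as Si$ \iff it permutes the set of kernels and permutes the set of images: conjugation by $r$ sends $\pwi KL$ to $\pwi{rK}{rL}$, and the monoid is generated by its projections. The plan is therefore to determine the group of all $r$ preserving both sets, and then to identify which elements are reflections and what they generate. Throughout, $\imm{\as Si}=\sett{\spn10,\spn01}$, so every normalising $r$ either fixes both coordinate axes (and is diagonal) or swaps them (and is antidiagonal).

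\emph{Diagonal case.} Write $r=\fmx a00b$. Then $r$ sends $\spn1s$ to $\spn1{(b/a)s}$ and fixes $\spn10,\spn01$. So $r$ preserves the kernels \iff $(b/a)S=S$. Since $S$ is finite, this is the same as $b/a\in G_S$. The reflections of this shape are those with one diagonal entry equal to $1$ and the other in $G_S\sm\{1\}$. These generate exactly $\lset{\fmx g00h}{g,h\in G_S}$. Also every diagonal reflection in the normaliser is of this form, because $b/a\in G_S$ with $a=1$ or $b=1$. This settles part (\ref{basicasinorm}) once antidiagonal reflections are excluded.

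\emph{Antidiagonal case.} Write $r=\fmx0ab0$. Then $r$ swaps $\spn10$ and $\spn01$, and sends $\spn1s$ to $\spn1{b a^{-1}s^{-1}}$. When $i=1$, exactly one of the axes is a kernel, so no antidiagonal map can preserve the kernel set. When $i=0$ or $2$, the axis condition is automatically symmetric, so we need $c=b/a$ to satisfy $cS^{-1}=S$. An antidiagonal matrix has eigenvalues $\pm\sqrt{ab}$, so it is a reflection \iff $ab=1$. Its non-trivial eigenvalue is then $-1$, and it has the form $\fmx0{u}{u^{-1}}0$ with $c=u^{-2}$. So antidiagonal normalising reflections exist \iff there is some $t$ with $t^2S^{-1}=S$; take $t=u^{-1}$. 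This matches the hypothesis in (\ref{hardasinorm}), using finiteness again to turn $t^2s^{-1}\in S$ for all $s$ into $t^2S^{-1}=S$.

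\emph{Generated group.} When such a $t$ exists, the set of valid $u^{-2}$ is the coset $t^2G_S$, because $c,c'$ both work \iff $c/c'\in G_S$. I then compute the group generated by the diagonal reflections together with $\fmx0{u}{u^{-1}}0$ for $u^{-2}\in t^2G_S$. Multiplying these antidiagonal reflections by diagonal elements $\fmx g00h$ gives all matrices $\fmx0{tg}{t^{-1}h}0$ and $\fmx0gh0$, up to replacing $t$ by $t\cdot(\text{element of }G_S)$ and the sign choices for square roots. Products of two antidiagonal reflections give $\fmx{tg}00{t^{-1}h}$. I will check that the four listed families are closed under multiplication, using $t^2\in G_S$ coset considerations; in particular $t^4\in G_S$ since $S=t^2S^{-1}=t^4 S$.

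The main obstacle is exactly this bookkeeping: showing the generated set is precisely the four families, neither more nor less. This includes the subtlety that $t$ is determined only up to $\sqrt{G_S}$-type ambiguity, and that $\fmx{tg}00{t^{-1}h}$ really is new, i.e.\ not already diagonal with entries in $G_S$. Finally, for the identification with $G(2m,2,2)$, I will conjugate by $\fmx{t^{-1/2}}00{t^{1/2}}$, or a similar diagonal matrix, to make all non-zero entries roots of unity of order dividing $2m$ with product in the $m$th roots. I will then compare the order $4m^2$ with $\card{G(2m,2,2)}=2(2m)^2/2=4m^2$.
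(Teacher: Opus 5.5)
Your route is the paper's. You use the fact that a reflection normalises $\as Si$ iff it permutes the kernels and the images, so it is diagonal or antidiagonal, and then you treat the two shapes. Both analyses are correct. In fact your condition $u^{-2}S^{-1}=S$ for $\fmx0u{u^{-1}}0$ is the right one; the paper's $x^2s^{-1}\in S$ is the condition for $\fmx0{x^{-1}}x0$. Together these settle part~(2) and the existence criterion in part~(1).

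\textbf{The gap is the ``generated group'' step.} It is not bookkeeping: the identities you plan to use there are false.

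First, $t^4\in G_S$ does not follow. From $t^2S^{-1}=S$ you get $S^{-1}=t^{-2}S$, so $t^2S^{-1}=t^2t^{-2}S=S$ carries no information, not $t^4S$. For $S=\{1,\omega\}$ one has $t^2=\omega$ and $G_S=\{1\}$, but $t^4=\omega^2$.

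Second, the admissible $u$ (those with $u^{-2}\in t^2G_S$) form the coset $t^{-1}\mu_{2m}$. Here $\mu_{2m}$ is the group of $2m$th roots of unity, i.e.\ all square roots of elements of $G_S$, so this is not a coset of $G_S$. Consequently:
\begin{itemize}
\item a product of two antidiagonal reflections is $\fmx\zeta00{\zeta^{-1}}$ with $\zeta\in\mu_{2m}$, and $t$ never appears;
\item a diagonal element times an antidiagonal reflection is $\fmx0{t^{-1}a}{tb}0$.
\end{itemize}
Neither $\fmx{tg}00{t^{-1}h}$ nor $\fmx0gh0$ arises in general.

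Carrying your computation through correctly gives
\[
N=\left\{\fmx a00b,\ \fmx0{t^{-1}a}{tb}0 \;:\; a,b\in\mu_{2m},\ ab\in G_S\right\}.
\]
This group has order $4m^2$, and conjugation by $\fmx t001$ carries it onto the monomial description of $G(2m,2,2)$. So the conjugacy claim stands, and so does part~(2).

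However, the four-family list in the statement agrees with $N$ only when $S=S^{-1}$ and $t$ is chosen in $\mu_{2m}\setminus G_S$. Take $S=\{1,\omega\}$ and $i=0$. Then $N=\{\pm I,\pm\fmx0\omega{\omega^2}0\}$. The listed set, by contrast, contains $\fmx0110$, which sends the kernel $\spn1\omega$ to the non-kernel $\spn1{\omega^2}$, and the list is not even closed under multiplication. The paper's own one-sentence treatment of this step (``$x\in tG_S$'') misses the same point.

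So you cannot finish by checking closure of the stated families. What your argument actually establishes is the corrected description of $N$ above, together with its conjugacy to $G(2m,2,2)$.
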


\begin{pf}
Any normalising reflection $r$ of $\as Si$ must permute the images and permute the kernels of $\as Si$. For $i=1$ this means that $r$ fixes each of the lines $\spn10$ and $\spn01$, so acts via a diagonal matrix. The fact that $r$ is a reflection that permutes the lines $\spn1s$ then means that the diagonal entries must be $1$ and $g$, for some $g\in G_S$. These reflections generate the group in (\ref{basicasinorm}).

For $i=0,2$ the \nrg may additionally include reflections that interchange the two lines $\spn10$ and $\spn01$. These act via matrices of the form
\[
\fmx0x{x^{-1}}0,
\]
and the condition that such a reflection permutes the lines $\spn1s$ gives $x^2s^{-1}\in S$ for all $s\in S$. This can happen only if $t$ exists as in the statement of the \lcnamecref{asinorm}, and $x\in tG_S$. In this case, these reflections, together with the reflections given by diagonal matrices, give the group in (\ref{hardasinorm}).
\end{pf}

We can ask when two of the monoids $\as Si$ are equivalent: clearly if $\as Si$ and $\as Tj$ are equivalent, then $i=j$ (because $i$ is the number of images of $\as Si$ which are also kernels, and this number is invariant under equivalence). Now equivalences come in two different types.
\begin{itemize}
\item
Take $u\in S$ and let $T=\lset{su^{-1}}{s\in S}$. Then $\as Si$ is equivalent to $\as Ti$ (by rescaling the first coordinate by $u$).
\item
Take $u\in S$ and let $T=\lset{s^{-1}u}{s\in S}$. If $i=0$ or $2$, then $\as Si$ is equivalent to $\as Ti$ (by interchanging the two coordinates and rescaling one coordinate by $u$).
\end{itemize}

We can also consider duality. We will see in \cref{c2classn} that the monoids $\as Si$ are the only irreducible finite \prms with exactly two images, up to equivalence. As a consequence, their duals are the only irreducible finite \prms with exactly two kernels. In the cases where $\as Si$ has exactly two kernels, $\as Si$ is in fact self-dual. There are two cases to consider.
\begin{itemize}
\item
Suppose $i=0$ and $S=\{1,s\}$. Then the linear map given by $\fmx 1{-1}s{-1}$ interchanges the two images $\spn10,\spn01$ with the two kernels $\spn11,\spn1s$, giving an equivalence between $\as S0$ and its dual;
\item
Suppose $i=1$ and $S=\{1\}$. Then the linear map given by $\fmx 1{-1}0{-1}$ interchanges the two images $\spn10,\spn01$ and with the two kernels $\spn11,\spn10$, giving an equivalence between $\as S1$ and its dual.
\end{itemize}

\subsubsection*{A one-parameter family}

Now we describe a one-parameter family of \prms with three images and three kernels. Suppose $w\neq\pm1$ is a root of unity in $\bbc$, and let $\bw w$ be the \prm with
\[
\imm{\bw w}=\sett{\spn11,\spn10,\spn1{1+w}},\qquad\kerm{\bw w}=\sett{\spn11,\spn01,\spn{1-w}1}.
\]
It is immediate from \cref{invlem} that $\bw w$ is irreducible.

First we find the trace group and the \nrg of $\bw w$.

\begin{propn}\label{bw}
Suppose $w\neq\pm1$ is a root of unity in $\bbc$, and let $G$ be the subgroup of $\bbc^\times$ generated by $w$ and $-w$. Then $\bw w$ has trace group $G$, and order $9\card G+2$.
\end{propn}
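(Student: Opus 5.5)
The plan is to combine \cref{pwisetrace} with \cref{cardprm}, so the work reduces to a finite trace computation. \cref{cardprm} applies because $\bw w$ is irreducible. It has $k=l=3$, and $\kerm{\bw w}\cap\imm{\bw w}=\sett{\spn11}\neq\emptyset$, so $0\in\bw w$. \cref{cardprm} then gives $9\card G+1$ non-zero elements of $\bw w$. Adding the zero element gives $9\card G+2$, where $G$ is the trace group. So it remains to show that the trace group of $\bw w$ is the group generated by $w$ and $-w$.

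Since $\kerm{\bw w}\neq\imm{\bw w}$, the monoid $\bw w$ is not equivalent to $\mna2$. Hence \cref{pwisetrace} applies: the trace group is generated by the non-zero values $\tr{pq}$ with $p,q$ projections in $\bw w$. Because $\bw w$ is \cmp, these projections are exactly the $\pwi KL$ with $K\in\kerm{\bw w}$, $L\in\imm{\bw w}$ and $K\neq L$, giving $8$ projections.

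To compute the traces I will write each projection $p=\pwi KL$ as $p=a_p\alpha_p^{\mathsf T}$. Here $a_p$ spans the line $L$ (in dimension $2$ an image is a line), $\alpha_p$ is a linear form vanishing on $K$, and the normalisation is $\alpha_p(a_p)=1$. Then
\[
\tr{pq}=\alpha_p(a_q)\,\alpha_q(a_p).
\]
The image vectors are $(1,1),(1,0),(1,1+w)$. Up to scaling, the kernel forms are $x-y$, $x$ and $x-(1-w)y$ for the kernels $\spn11,\spn01,\spn{1-w}1$ respectively.

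Rather than tabulating all $64$ products, I would organise the calculation by the bilinear quantities $\alpha_K(a_L)$. Each trace is a product of two such quantities, divided by the normalisations $\alpha_K(a_K')$. I would then check the following three things.
\begin{itemize}
\item Every non-zero value lies in $\lan w,-w\ran=\lset{\pm w^j}{j\in\bbz}$. The normalising constants that occur are $1$, $-w$ and $w$. For example, $\alpha_{x-(1-w)y}$ evaluated at $(1,1)$ gives $w$, and evaluated at $(1,1+w)$ gives $w^2$.
\item Some trace equals $-w$. Take $p=\pwi{\spn01}{\spn1{1+w}}$ and $q=\pwi{\spn11}{\spn10}$. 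Then $\tr{pq}=1\cdot(1-(1+w))=-w$.
\item Some trace equals $w$, or equals $-1$. Either one, together with $-w$, generates the group $\lan w,-w\ran$.
\end{itemize}

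The main obstacle is the first of these checks. It requires the complete list of the non-zero traces, and one must confirm that no factor outside $\lan w,-w\ran$ appears, for instance a factor such as $1-w$ or $1+w$. Such factors must cancel once the normalisations are included. I would carry this out systematically by making a $3\times3$ table of the values $\alpha_K(a_L)$. Writing $\hat\alpha_K=\alpha_K/\alpha_K(a_{L'})$ for the suitably normalised forms, each trace is then a ratio $\hat\alpha_K(a_L)\,\hat\alpha_{K'}(a_{L'})$, and the table lets one confirm that every entry lies in $\{0\}\cup\lan w,-w\ran$.
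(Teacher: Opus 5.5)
Your proposal is correct and takes the same route as the paper. \cref{pwisetrace} reduces the trace group to the non-zero traces $\tr{pq}$ over pairs of the eight projections, and \cref{cardprm}, together with $0\in\bw w$ (since $\spn11$ is both a kernel and an image), gives the order $9\card G+2$. The obstacle you anticipate does not arise: your table of values $\alpha_K(a_L)$, with kernels $\spn11,\spn01,\spn{1-w}1$ against images $\spn11,\spn10,\spn1{1+w}$, has rows $(0,1,-w)$, $(1,1,1)$, $(w,1,w^2)$, so every non-zero trace visibly lies in $\lan w,-w\ran$ (they are $1,\pm w^{\pm1},w^{\pm2}$); for your third check, $\pwi{\spn{1-w}1}{\spn10}\pwi{\spn01}{\spn11}=\fmx w000$ has trace $w$.
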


\begin{pf}
We use \cref{pwisetrace}. $\bw w$ contains eight projections, and it is routine to check that the traces of products of pairs of these projections are $0,1,\pm w^{\pm1},w^{\pm2}$. So the non-zero traces generate $G$. Now the order of $\bw w$ follows from \cref{cardprm}.
\end{pf}

\begin{propn}\label{bwnorm}
The \nrg of $\bw w$ has order $2$, and is generated by the reflection
\[
\mfrac1w\fmx1{w-1}{w+1}{-1}.
\]
\end{propn}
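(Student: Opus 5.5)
Any normalising reflection $r$ of $\bw w$ must permute $\imm{\bw w}$ and $\kerm{\bw w}$, because conjugating by $r$ sends $\pwi KL$ to $\pwi{rK}{rL}$. Conversely, since $\bw w$ is \cmp (by \cref{irrconds}, or by construction), any invertible $r$ preserving both sets of lines normalises $\bw w$. So the task is to find all reflections $r\in\gl2\bbc$ that preserve the three image lines $\spn11,\spn10,\spn1{1+w}$ and also the three kernel lines $\spn11,\spn01,\spn{1-w}1$.

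\emph{Step one.} The line $\spn11$ is the unique line that is both a kernel and an image. This uses $w\neq\pm1$: for instance $\spn1{1+w}\ne\spn01$, and $\spn10\ne\spn{1-w}1$ because $w\neq1$. Hence $r$ fixes $\spn11$. It then permutes the two remaining images $\spn10,\spn1{1+w}$ and the two remaining kernels $\spn01,\spn{1-w}1$.

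\emph{Step two.} I will enumerate the four combinations.
\begin{itemize}
\item If $r$ fixes all six lines, then it fixes at least three lines in $\bbc^2$, so it is a scalar. A scalar is not a reflection, so this case is impossible.
\item If $r$ fixes the images but swaps the kernels (or the reverse), then $r$ fixes the three image lines, so it is scalar, yet it moves a line. This is a contradiction.
\item The only remaining case is that $r$ swaps both pairs.
\end{itemize}
Since $r$ is a reflection it has eigenvalues $1,\lambda$, and a line-swapping involution on the projective line that fixes $\spn11$ is determined by its action on the lines. So I write down the unique projective transformation fixing $\spn11$ and swapping $\spn10\leftrightarrow\spn1{1+w}$ and $\spn01\leftrightarrow\spn{1-w}1$, checking that such a transformation exists. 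A M\"obius map is determined by the three conditions $\infty$-coordinates $1\mapsto1$, $0\mapsto1+w$, $1+w\mapsto0$. Its consistency with the kernel swap is the main computation: I must verify that the resulting map also sends $\spn01$ to $\spn{1-w}1$. I would verify this directly with the matrix $A=\fmx1{w-1}{w+1}{-1}$, which has trace $0$. Checking the six line images is routine:
\[
A\tbinom11=w\tbinom11,\quad A\tbinom10=\tbinom1{1+w},\quad A\tbinom01=\tbinom{w-1}{-1}.
\]

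\emph{Step three.} Next, I pick the scalar so that the matrix is a reflection. We have $\det A=-1-(w^2-1)=-w^2$ and $\operatorname{tr}A=0$, so $A^2=w^2I$ and the eigenvalues of $A$ are $\pm w$. Dividing by $w$ gives eigenvalues $1,-1$, which is a reflection. Any other scalar multiple $cA$ would need eigenvalue $1$, forcing $c=\pm1/w$. But $-\frac1wA$ fixes the other eigenline pointwise and acts by $-1$ on $\spn11$; it also swaps the lines, hence it is also a reflection. Here I must be careful. The normalising reflection group is generated by all such reflections, and $\pm\frac1wA$ differ by $-I$, so both cannot lie in a group of order $2$. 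I would recheck: the $1$-eigenspace of $\frac1wA$ is $\spn11$, while the $1$-eigenspace of $-\frac1wA$ is the other eigenline. Both are genuinely reflections, so the group they generate contains $-I$ and has order $4$. I expect this to be the main obstacle, and I would resolve it by re-examining whether $-\frac1wA$ really preserves the lines. It does, since scalars do not move lines. So I would test the claim against an explicit value such as $w=i$ before finalising. If the calculation confirms order $4$, I would conclude that the statement is intended up to scalars. Otherwise, I would find the missing constraint, for instance that $r$ must normalise $\bw w$ itself, including the scalar multiples in $G$, which is automatic. Only then would I conclude that the group generated by $\frac1wA$ has order $2$, as stated.
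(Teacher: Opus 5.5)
Your Steps one to three are correct, and they do more than you allow: they refute the statement as written. Write $A=\fmx1{w-1}{w+1}{-1}$.

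Conjugation by $cr$ equals conjugation by $r$ for every $c\in\bbc^\times$. So $-\frac1wA$ normalises $\bw w$ exactly when $\frac1wA$ does. Moreover $-\frac1wA$ is a reflection: it fixes $\spn{1-w}{1+w}$ pointwise and acts as $-1$ on $\spn11$. Hence the \nrg contains $\frac1wA\cdot\bigl(-\frac1wA\bigr)=-I$. Your case analysis then shows that the group is exactly $\{\pm I,\pm\frac1wA\}\cong C_2\times C_2$, of order $4$; only its image in $\mathrm{PGL}_2(\bbc)$ has order $2$. For $w=\mathrm i$, the two reflections are $\fmx{-\mathrm i}{1+\mathrm i}{1-\mathrm i}{\mathrm i}$ and its negative, both of trace $0$ and determinant $-1$.

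The paper's proof says that once $\spn11$ is fixed, ``the only reflection that accomplishes this is the one given''. That overlooks exactly this second lift. Compare the paper's own treatment of $\ci2$ in \cref{cinorm}: there both $\fmx0110$ and $\fmx0{-1}{-1}0$ are counted, and the answer is $C_2\times C_2$.

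So the real defect in your proposal is its ending. There is no ``missing constraint'' to find. Normalising is a condition on conjugation alone, so nothing about $\bw w$ (including the scalar multiples coming from its trace group) can separate $r$ from $-r$. You should commit to what your computation gives: the \nrg of $\bw w$ is generated by the two reflections $\pm\frac1wA$ and has order $4$. The proposition is therefore correct only modulo scalars. As it stands, your write-up neither proves the stated claim (which is false) nor cleanly states the correction.

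There is also a small slip in Step one. The inequalities you cite there hold for every $w$. The hypothesis $w\neq\pm1$ is really used to make the three images, and the three kernels, pairwise distinct, which Step two needs.
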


\begin{pf}
Any normalising reflection permutes the kernels of $\bw w$ and the images of $\bw w$, so in particular fixes the line $\spn11$. It is a routine exercise to check that the only reflection that accomplishes this is the one given.
\end{pf}

Now we consider equivalences between the monoids $\bw w$ and their duals.

\begin{lemma}\label{bweqd}
Suppose $w,x\neq\pm1$ are roots of unity in $\bbc$.
\begin{enumerate}
\item
$\bw w$ is equivalent to $\bw x$ \iff $w=x^{\pm1}$.
\item
$\bw w$ is equivalent to the dual of $\bw x$ \iff $w=-x^{\pm1}$.
\end{enumerate}
\end{lemma}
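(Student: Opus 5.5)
The approach is to reduce equivalence of \cmp \prms to a problem about configurations of points on the projective line. Since $\bw w$ and $\bw x$ are \cmp, each is determined by its set of kernels and its set of images. A bijective linear map $f$ therefore gives an equivalence $\bw w\to\bw x$ \iff $f$ carries $\kerm{\bw w}$ onto $\kerm{\bw x}$ and $\imm{\bw w}$ onto $\imm{\bw x}$. Only the induced M\"obius transformation of $\mathbb P^1(\bbc)$ matters, and every M\"obius transformation is induced by some $f$. So the question becomes: when is there a M\"obius map sending the configuration $(\text{3 images},\text{3 kernels})$ of one monoid to that of the other?

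I will write lines by their slopes $y/x$. Then $\bw w$ has image slopes $\{1,0,1+w\}$ and kernel slopes $\{1,\infty,1/(1-w)\}$. The slope $1$ is the unique line that is both a kernel and an image, so any equivalence must fix it. I normalise by sending it to $\infty$ via $z\mapsto 1/(z-1)$ and then multiplying by $w$. This turns the configuration into two pairs in $\bbc$:
\[
\text{images }\{-w,\,1\},\qquad\text{kernels }\{0,\,1-w\}.
\]
The remaining freedom is exactly the affine maps $z\mapsto az+b$. So $\bw w\simeq\bw x$ \iff some affine map carries the pair $\{-w,1\}$ to $\{-x,1\}$ and the pair $\{0,1-w\}$ to $\{0,1-x\}$.

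Now enumerate the four possible matchings (identity or swap on each pair), noting that two point assignments determine the affine map. For example, sending $0\mapsto0$ and $1-w\mapsto1-x$ forces $z\mapsto\frac{1-x}{1-w}z$; one then checks whether the images match. One matching gives $w=x$. The matching that swaps both pairs gives $w=x^{-1}$; to verify this, check that $z\mapsto -z/w+1/w$ (or its analogue) works when $x=w^{-1}$. The other two matchings lead to equations that force $w=\pm1$, which is excluded. This case check is the main computational step, and I expect it to be the only real obstacle: it needs care with the exclusions $w,x\ne\pm1$, and possibly with degenerate coincidences such as $-w=1-w$, which are impossible.

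For part (2), I first describe the dual. Identify $V^\ast$ with $V$ using the symplectic form $\det(u,v)$. Under this identification the annihilator of a line is the line itself. Hence the dual of a \cmp \prm with kernels $\mathcal K$ and images $\mathcal L$ is the \cmp \prm with kernels $\mathcal L$ and images $\mathcal K$, since the dual of $\pwi KL$ is $\pwi{L^\perp}{K^\perp}$. So the dual of $\bw x$, normalised as above, has images $\{0,1-x\}$ and kernels $\{-x,1\}$. Applying the affine map $z\mapsto1-z$ turns this into images $\{1,x\}$ and kernels $\{1+x,0\}$. This is exactly the normalised configuration of $\bw{-x}$, which has images $\{x,1\}$ and kernels $\{0,1+x\}$. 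Hence the dual of $\bw x$ is equivalent to $\bw{-x}$, and part (2) follows from part (1) applied with $-x$, since $-x\neq\pm1$.
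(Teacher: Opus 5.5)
Your strategy is the paper's own. An equivalence must fix the unique common line $\spn11$ and then match the two remaining images and the two remaining kernels in one of four ways. Part (2) then reduces to part (1) through one explicit equivalence between $\bw{-x}$ and the dual of $\bw x$; the paper uses $\fmx0110$. Your normalisation to images $\{-w,1\}$ and kernels $\{0,1-w\}$ is correct. Your part (2) is correct too, and your determinant-pairing description of the dual nicely explains why the paper's swap matrix works. The problem is the case check in part (1), which you deferred: it does not come out as you predict.

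Write $\phi(z)=az+b$ and work through the four matchings.
\begin{itemize}
\item If $\phi(0)=0$ and $\phi(1-w)=1-x$, then $\phi(z)=\frac{1-x}{1-w}z$. Requiring $\phi(1)=1$ forces $x=w$. Requiring $\phi(1)=-x$ forces $xw=1$, and $\phi(z)=-z/w$ then genuinely works.
\item If $\phi(0)=1-x$ and $\phi(1-w)=0$, then requiring $\phi(1)=1$ forces $xw=1$, realised by $\phi(z)=(z+w-1)/w$. Requiring $\phi(1)=-x$ forces $x=w$, realised by $\phi(z)=1-w-z$.
\end{itemize}
So no matching forces $w=\pm1$. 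Swapping both pairs gives $x=w$, not $x=w^{-1}$, and the two mixed matchings are the ones that give $x=w^{-1}$. These are exactly the paper's cases (a)--(d). Your proposed witness $z\mapsto -z/w+1/w$ is also not an equivalence. It sends the kernel $0$ to $1/w$, which is neither $0$ nor $1-w^{-1}$ for a root of unity $w$.

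Every matching still yields $x\in\{w,w^{-1}\}$, so the statement survives, and the fix is simply to redo the four cases as above. As written, though, both halves of your part (1) would fail on checking: the ``only if'' argument rests on two cases being contradictory when they are not, and the ``if'' direction rests on a map that does not work.
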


\begin{pf}
\begin{enumerate}[beginthm]
\item
Suppose we have a linear map $a:\bbc^2\to\bbc^2$ which yields an equivalence between $\bw w$ and $\bw x$. Then $a$ fixes the line $\spn11$, sends the two lines $\spn10,\spn1{1+w}$ to $\spn10,\spn1{1+x}$ in some order, and sends the lines $\spn01,\spn{1-w}1$ to $\spn01,\spn{1-x}1$ in some order. This gives four cases:
\begin{enumerate}
\item
$a$ fixes the two lines $\spn10$ and $\spn01$. Then $a$ is just a scalar multiple of the identity, so fixes the two lines $\spn1{1+w}$ and $\spn{1-w}1$, so that $x=w$.
\item
$a$ fixes the line $\spn10$ and sends $\spn{1-w}1$ to $\spn01$. Then (up to scaling) $a$ is given by $\fmx1{w-1}0w$, so sends $\spn01$ to $\spn{1-w^{-1}}1$ and $\spn1{1+w}$ to $\spn1{1+w^{-1}}$. So $x=w^{-1}$.
\item
$a$ fixes the line $\spn01$ and sends $\spn1{1+w}$ to $\spn10$. Then (up to scaling) $a$ is given by $\fmx w0{w+1}{-1}$, so sends $\spn10$ to $\spn1{1+w^{-1}}$ and $\spn{1-w}1$ to $\spn{1-w^{-1}}1$. So again $x=w^{-1}$.
\item
$a$ sends $\spn1{1+w}$ to $\spn10$ and $\spn{1-w}1$ to $\spn01$. Then (up to scaling) $a$ is given by $\fmx1{w-1}{w+1}{-1}$, so sends $\spn10$ to $\spn1{1+w}$ and $\spn01$ to $\spn{1-w}1$, so that $x=w$.
\end{enumerate}
\item
First we note that the map $a=\fmx0110$ yields an equivalence from $\bw w$ to the dual of $\bw{-w}$. Combining this with part (1) yields the result.
\qedhere
\end{enumerate}
\end{pf}

\subsubsection*{Some sporadic examples}

Finally, we describe some sporadic examples $\ci i$ for $i=0,1,2,3,4,5$. We fix a primitive cube root of unity $\om\in\bbc$, and we define \prms as follows.
\[
\begin{array}{cccc}\hline
i&\imm{\ci i}&\kerm{\ci i}&\card{\ci i}\\\hline
0&\sett{\spn10,\spn01,\spn{-1}1}&\sett{\spn10,\spn01,\spn{-1}1}&20\\
1&\sett{\spn10,\spn01,\spn{-1}1}&\sett{\spn10,\spn01,\spn\omega1}&56\\
2&\sett{\spn10,\spn01,\spn{-1}1}&\sett{\spn10,\spn01,\spn\omega1,\spn{\omega^2}1}&74\\
3&\sett{\spn10,\spn01,\spn{-1}1}&\sett{\spn10,\spn01,\spn{-1}1,\spn\omega1}&74\\
4&\sett{\spn10,\spn01,\spn{-1}1}&\sett{\spn10,\spn01,\spn{-1}1,\spn\omega1,\spn{\omega^2}1}&92\\
5&\sett{\spn10,\spn01,\spn{-1}1,\spn\omega1}&\sett{\spn10,\spn01,\spn{-1}1,\spn\omega1}&98\\\hline
\end{array}
\]
Note that $\ci0$ is just the \prm $\mna2$ from \cref{typeasec}. We can easily check that the trace group of $\ci0$ is $\{\pm1\}$, while for $1\ls i\ls 5$ the trace group of $\ci i$ is the group of $6$th roots of unity in $\bbc$. The order of each $\ci i$ now follows from \cref{cardprm}, and is given in the table.

By considering the cardinalities of $\kerm M $, $\imm M$ and $\kerm M\cap\imm M$ we can see that these six \prms are pairwise inequivalent. $\ci0$ and $\ci5$ are self-dual, and $\ci1$ is equivalent to its dual via the linear map $\fmx0\om{-1}0$.

Now we find the \nrg of $\ci i$.

\begin{propn}\label{cinorm}
\begin{itemize}[beginthm]
\item
The \nrg of $\ci0$ and of $\ci4$ is conjugate to the complex reflection group of type $G(6,2,2)$, generated by the reflections
\[
\fmx0110,\quad\fmx110{-1},\quad\fmx0\om{-\om}{-\om}
\]
\item
The \nrg of $\ci1$ is trivial.
\item
The \nrg of $\ci2$ is $C_2\times C_2$, generated by the reflections
\[
\fmx0110,\quad\fmx0{-1}{-1}0.
\]
\item
The \nrg of $\ci3$ is $C_3$, generated by the reflection
\[
\fmx0\om{-\om}{-\om}.
\]
\item
The \nrg of $\ci5$ is conjugate to the exceptional reflection group $G_5$, generated by the reflections
\[
\fmx0\om{-\om}{-\om},\quad\fmx0{-\om^2}1{-\om}.
\]
\end{itemize}
\end{propn}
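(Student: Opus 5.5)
The plan is to reduce everything to projective geometry on the line $\mathbb P^1(\bbc)$. Each $\ci i$ is \cmp, and in dimension $2$ its projections are exactly the maps $\pwi KL$ with $K\in\kerm{\ci i}$, $L\in\imm{\ci i}$ and $K\neq L$. Any $g\in\gll{\bbc^2}$ satisfies $g\,\pwi KL\,g^{-1}=\pwi{gK}{gL}$. So a reflection $r$ normalises $\ci i$ \iff it permutes $\kerm{\ci i}$ and permutes $\imm{\ci i}$. This condition depends only on the image $\bar r$ of $r$ in $\mathrm{PGL}_2(\bbc)$.

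The first step is therefore to compute, for each $i$, the subgroup $H_i\leqslant\mathrm{PGL}_2(\bbc)$ stabilising both finite sets of points. I will write lines as points $x/y$ of $\mathbb P^1$, so that $\spn10=\infty$, $\spn01=0$, $\spn{-1}1=-1$ and $\spn\om1=\om$. For $0\ls i\ls4$ the image set is $\{\infty,0,-1\}$. A Möbius transformation is determined by its action on three points, so $H_i$ embeds in the group $\sss3$ of such transformations; this group is generated by $z\mapsto 1/z$ and the $3$-cycle $z\mapsto -1/(z+1)$. The $3$-cycles fix $\om,\om^2$, and the transpositions swap them. Checking which of these preserve each kernel set gives:
\begin{itemize}
\item $H_0=H_4=\sss3$;
\item $H_1=1$, since $\om$ is moved by every non-trivial element unless the kernel set is preserved, which fails;
\item $H_2$ is generated by the single transposition fixing $-1$, namely $z\mapsto1/z$, because the subset $\{\infty,0\}$ of kernels that are images must be preserved;
\item $H_3=C_3$, because $\om$ must be fixed.
\end{itemize}
For $\ci5$ both sets equal the four points $\{\infty,0,-1,\om\}$. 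These have equianharmonic cross-ratio, so their setwise stabiliser is the tetrahedral group $A_4$.

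The second step is to lift. A non-identity element of $\mathrm{PGL}_2(\bbc)$ of finite order is represented by a diagonalisable matrix with eigenvalues $a\neq b$. Its reflection lifts are exactly $a^{-1}m$ and $b^{-1}m$. An involution thus gives reflections with eigenvalues $1,-1$, and an element of order $3$ gives reflections with eigenvalues $1,\om^{\pm1}$. I then list these lifts explicitly and identify the group they generate.
\begin{itemize}
\item For $H_2=C_2$ the two lifts of $z\mapsto1/z$ are $\pm\fmx0110$. These commute and generate $C_2\times C_2$.
\item For $H_3=C_3$ the lifts of the $3$-cycle are $\fmx0\om{-\om}{-\om}$ and its inverse, generating $C_3$.
\item For $\sss3$ the lifts give the three listed generators. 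The group they generate has order $36$ and is $D_{12}\times C_3\cong G(6,2,2)$, matching \cref{norma} since $\ci0=\mna2$.
\end{itemize}

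The main obstacle is $\ci5$. There I would lift the order-$3$ elements of $A_4$ to the two displayed reflections and verify that they generate a group of order $72$ with centre of order $6$, which identifies it as $G_5$. However, I must also decide whether the double transpositions in $A_4$ contribute order-$2$ reflections. If they preserved both sets, the generated group would be larger than $G_5$ (it would contain $G_7$-type elements). So I would first recheck directly whether, for instance, the involution swapping $\infty\leftrightarrow0$ and $-1\leftrightarrow\om$ is really a Möbius map preserving these points. If it is, the group must be recomputed; if not, the stabiliser is only $C_3$-generated and the lifts give exactly $G_5$. This verification is where I expect the real work, and the possible discrepancy, to lie.
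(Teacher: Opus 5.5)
Your framework is sound and is essentially the paper's own argument in projective language. A reflection normalises the \cmp monoid $\ci i$ iff it permutes $\kerm{\ci i}$ and $\imm{\ci i}$, so one computes the stabiliser $H_i\leqslant\mathrm{PGL}_2(\bbc)$ and collects all reflection lifts; the paper instead filters the ten reflections of $G(6,2,2)$ from \cref{mnarefs}. Your groups $H_i$ are all correct, and the cases $\ci0,\ci1,\ci2,\ci4$ go through. For $H_1=1$ the reason should be that transpositions send $\omega$ to $\omega^2\notin\kerm{\ci1}$, while $3$-cycles fail to preserve $\kerm{\ci1}\cap\imm{\ci1}=\{\infty,0\}$.

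The genuine gap is in the lifting for $\ci3$. By your own rule each of the two non-identity elements of $H_3$ has two reflection lifts, so there are four normalising reflections, not two. The lifts of $z\mapsto-1/(z+1)$ are $A=\fmx0\omega{-\omega}{-\omega}$ (eigenvalues $1,\omega^2$, fixing $\spn\omega1$) and $\omega A=\fmx0{\omega^2}{-\omega^2}{-\omega^2}$ (eigenvalues $\omega,1$, fixing $\spn{\omega^2}1$). The matrix $A^{-1}$ lifts the \emph{inverse} $3$-cycle instead. Conjugation by $\omega A$ is conjugation by $A$, so $\omega A$ normalises $\ci3$. Hence the \nrg contains $\omega I=(\omega A)A^{-1}$, and it equals $\langle A,\omega I\rangle\cong C_3\times C_3$, of order $9$. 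So the clause ``$C_3$'' is false, and your argument for it rests on the miscount.

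The check you postpone for $\ci5$ also resolves against the statement. Every double transposition of four distinct points of the projective line is induced by a M\"obius map, since cross-ratio is invariant under them. Here $m=\fmx0{-\omega}10$ swaps $\spn10\leftrightarrow\spn01$ and $\spn{-1}1\leftrightarrow\spn\omega1$. Since $m^2=-\omega I$, the matrix $r=(-\omega)^{-1/2}m$ is a reflection of order $2$ with $\det r=-1$, and it normalises $\ci5$. Both displayed generators have determinant $\omega^2$, so every element of the group they generate has determinant in $\{1,\omega,\omega^2\}$, and $r$ is not in it. Indeed $G_5$ has no reflections of order $2$.

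Now take all $22$ lifts ($6$ of order $2$, $16$ of order $3$). The generated group has projective image $A_4$, and its scalars are exactly $\mu_{12}$. They are bounded by $c^2=\det(cI)\in\mu_6$. In the other direction, $\omega I$ arises as above, $-I=r(-r)$, and a product of lifts of the three involutions of $V_4$ is a scalar of determinant $-1$. Hence the \nrg of $\ci5$ has order $144$ and is $G_7$, not $G_5$.

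In short, your method, carried out correctly, proves the clauses for $\ci0,\ci1,\ci2,\ci4$ but refutes those for $\ci3$ and $\ci5$. No proof of the statement as written is possible.
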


\begin{pf}
In each case we find the group generated by the reflections that fix the set of kernels and the set of images. In all cases except $\ci5$, this means that our reflections must fix the set of lines $\spn10,\spn01,\spn{-1}1$. It is easy to classify these reflections, and see that they generate a group conjugate to $G(6,2,2)$ (this is really a special case of \cref{mnarefs,norma}). For $i=1,2,3,4$ we then impose further restrictions to yield a subgroup. In the case of $\ci5$ we need to find the reflections that fix the set of lines $\spn10,\spn01,\spn{-1}1,\spn\om1$, which is a routine exercise.
\end{pf}


%
%
%

\subsection{Classification}\label{c2classn}

In this section we show that the \prms in \cref{someexsec} give all the finite irreducible \prms on $\bbc^2$ up to equivalence and duality.

\begin{thm}\label{classify}
Suppose $M$ is an irreducible finite \prm on $\bbc^2$. Then $M$ or its dual is equivalent to one of the \prms $\as Si$, $\bw w$ or $\ci i$.
\end{thm}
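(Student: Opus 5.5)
By \cref{irrconds}, $M$ is \cmp, so $M$ is determined by the two finite sets of lines $\kerm M$ and $\imm M$. The plan is to translate finiteness into a condition on these lines and then classify the admissible configurations. Irreducibility together with \cref{irrconds} gives $k=\card{\kerm M}\gs2$ and $l=\card{\imm M}\gs2$. Replacing $M$ by its dual $M^\ast$ interchanges kernels and images, so we may assume $l\ls k$.

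\emph{Step 1: a cross-ratio criterion.} A projection on $\bbc^2$ with kernel $\lan a\ran$ and image $\lan b\ran$ can be written as $v\mapsto\frac{\phi_a(v)}{\phi_a(b)}b$, where $\phi_a$ is a linear form vanishing on $a$. From this,
\[
\tr{pq}=\frac{\phi_{a}(b')\,\phi_{a'}(b)}{\phi_{a}(b)\,\phi_{a'}(b')}
\]
for $p=\pwi{\lan a\ran}{\lan b\ran}$ and $q=\pwi{\lan a'\ran}{\lan b'\ran}$, which is a cross-ratio of the four lines. By the trace condition, every such cross-ratio with kernels $K,K'$ and images $L,L'$ (and $K\neq L$, $K'\neq L'$) is $0$ or a root of unity. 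Conversely, if this holds for all such quadruples, then by \cref{pwisetrace} the trace group is finitely generated by roots of unity, hence finite; \cref{cardprm} then shows that the \cmp monoid is finite. So the task is to classify finite line configurations in which all these cross-ratios are $0$ or roots of unity.

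\emph{Step 2: $l=2$.} Choose coordinates so that $\imm M=\sett{\spn10,\spn01}$. Every other kernel has the form $\spn1s$ with $s\neq0$. The cross-ratio for kernels $\spn1s,\spn1{s'}$ against the two images is $s/s'$ up to sign conventions, so all ratios of the $s$ are roots of unity. Rescaling the first coordinate makes $1\in S$, so every $s$ is a root of unity. The axes $\spn10$ and $\spn01$ may or may not also be kernels, giving $i\in\{0,1,2\}$ after possibly swapping the coordinates, and the irreducibility constraint $i+\card S\gs2$ must hold. This yields exactly $\as Si$.

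\emph{Step 3: $3\ls l\ls k$.} Normalise three images to $\spn10,\spn01,\spn11$, and write each kernel as a point $x\in\bbp^1$. Using the image pairs $\{0,\infty\}$, $\{1,\infty\}$ and $\{0,1\}$ in turn, the ratios $x/y$, $(x-1)/(y-1)$ and $x(y-1)/(y(x-1))$ must be roots of unity (or $0$) for all kernels $x,y$. The key arithmetic input is the classification of short vanishing sums of roots of unity. In particular, if $u-v=1$ with $u,v$ roots of unity, then $\{u,-v\}=\{-\om,-\om^2\}$, and there is a similar list for four-term relations $\al+\be+\ga+\de=0$, which must split into cancelling pairs or a triple of cube roots plus a zero term. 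This forces the kernels (and further images) to lie among a very small set of lines, such as $0,\infty,\pm1,\om,\om^2,1\pm w$, up to the symmetries of $\bbp^1$ preserving the configuration. I will first treat $l=k=3$ with $\kerm M\cap\imm M$ of size $1$; here the relations leave a one-parameter root-of-unity family, which should be $\bw w$. The remaining intersection patterns and $k\gs4$ should give only the sporadic $\ci i$, with $\ci5$ the only case having $l=4$. I also need to show that $l\gs5$, or $l=4$ with a larger $k$, is impossible.

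The main obstacle is Step 3: organising the case analysis on the intersection pattern of $\kerm M$ and $\imm M$, and correctly applying the root-of-unity relation lemmas. For this I will first establish a clean lemma listing all solutions of $x+y+z=0$ and $x+y=1$ in roots of unity, and then reduce every case to it. After the classification, I will check each candidate configuration against \cref{bweqd} and the stated equivalences to identify it with the listed monoid.
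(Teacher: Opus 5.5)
Your framework matches the paper's. Completeness comes from \cref{irrconds}, finiteness enters only through the condition that traces of products of two projections (your cross-ratios) are $0$ or roots of unity, and your Step 2 is exactly \cref{onlytwo}. But Step 3 is the substance of the theorem, and you only announce it (``should be'', ``should give'', ``I also need to show''). Moreover, the tool you plan to use cannot carry it, so this is a genuine gap.

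The missing idea is \cref{int0}: three kernels and three images of $M$ are never six distinct lines. The paper proves this by a modulus argument, not by vanishing sums. Normalise the kernels to $\spn10,\spn01,\spn1{-1}$ and write the images as $\spn x1,\spn y1,\spn z1$. The traces force $|x|=|y|=|z|$ and $|x+1|=|y+1|=|z+1|$, so $x,y,z$ lie on two circles with distinct centres, which meet in at most two points. Short vanishing sums cannot replace this. Lines outside $\kerm M\cap\imm M$ need not be roots of unity in your coordinates: kernels at $1\pm\mathrm{i}$ pass all three of your tests against images at $0,1,\infty$. Eliminating variables between three such kernels produces six-term relations, beyond your three- and four-term lemmas.

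\cref{int0} is what organises the rest. For $l,k\ge 3$ it gives $\kerm M\cap\imm M\neq\emptyset$. If this intersection is a single line, it forces $l=k=3$: otherwise choose three images, or three kernels, avoiding that line. Only then does your four-term lemma finish the job. Take the common line $\spn11$, kernels $\spn01,\spn{1-w}1$ and images $\spn10,\spn1{1-x}$. The traces $w$, $x$, $w+x-wx$ must be roots of unity, and splitting $w+x-wx-u=0$ into cancelling pairs gives $x=-w$, i.e.\ $\bw w$.

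If the intersection contains two lines, normalise those two to $\spn10,\spn01$ (rather than three arbitrary images to $0,1,\infty$), and a third image to $\spn{-1}1$. Every further kernel $\spn x1$ then makes $x+1$ and $(x+1)/x$ roots of unity, which your three-term lemma pins to $x\in\{-1,\omega,\omega^2\}$. The analogous constraints of \cref{int2} then bound $l$ and $k$ and leave only the $\ci i$, up to equivalence and duality.

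Without these steps, three of your claims are unsupported: your list of candidate lines, your claim that the remaining intersection patterns give only the $\ci i$, and your exclusion of $l\ge 5$.
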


The proof splits up into several propositions.

\begin{propn}\label{onlytwo}
Suppose $M$ is a finite irreducible \prm on $\bbc^2$ with $\card{\imm M}=2$. Then $M$ is equivalent to $\as Si$ for some $S,i$.
\end{propn}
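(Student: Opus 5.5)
We first fix coordinates. Since $\card{\imm M}=2$, the two images are distinct lines, so we may choose a basis making them $\spn10$ and $\spn01$. Because $M$ is finite and irreducible, \cref{irrconds} tells us that $M$ is \cmp. Hence $M$ is determined by $\kerm M$, and it suffices to show that $\kerm M$ has the shape $\lset{\spn1s}{s\in S}\cup E$. Here $S$ should be a finite set of roots of unity with $1\in S$, and $E$ should be a subset of $\sett{\spn10,\spn01}$. The parameter $i$ will then be $\card E$. If $E=\sett{\spn01}$, we interchange the two coordinates (which preserves the set of images) so that $E=\sett{\spn10}$, matching the table for $i=1$.

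The kernels that are not coordinate axes have the form $\spn1s$ with $s\neq0$. By completeness, for each such kernel both projections $\pwi{\spn1s}{\spn10}$ and $\pwi{\spn1s}{\spn01}$ lie in $M$. As in the proof of \cref{asi}, these projections are $\fmx1{-s^{-1}}00$ and $\fmx00{-s}1$. Multiplying one arising from $s$ by one arising from $t$ gives $\fmx1{-t^{-1}}00\fmx00{-s}1$, whose trace is $st^{-1}$. By the trace condition this must be a root of unity, so all ratios $s/t$ of non-axis kernels are roots of unity. This is the key step and uses only the trace condition.

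Next we normalise. Suppose there is at least one non-axis kernel $\spn1{s_0}$. Rescaling the second coordinate by $s_0^{-1}$ fixes both images and turns this kernel into $\spn11$. Every other non-axis kernel $\spn1s$ now has $s=s/1$ a root of unity, so the set $S$ of such $s$ contains $1$, consists of roots of unity, and is finite because $M$ is finite. If there is no non-axis kernel, then every kernel is an axis, so $\kerm M\subseteq\imm M=\sett{\spn10,\spn01}$. In that case $\spn10\cap\spn01=0$ together with \cref{invlem} makes each axis invariant, contradicting irreducibility. So $S\neq\emptyset$ after normalisation.

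Finally we check the condition $i+\card S\gs2$. If $i=0$ and $S=\{1\}$, there is a single kernel $K=\spn11$. Then $K$ is invariant by \cref{invlem}, since $K$ is contained in the kernel of every projection, contradicting irreducibility. Hence $M$ agrees with $\as Si$ on kernels and images, and both monoids are \cmp. Since a \cmp \prm is generated by its projections, which are exactly the $\pwi KL$ for $K\in\kerm M$ and $L\in\imm M$ with $K\nls L$, the two monoids coincide. We expect the main (mild) obstacle to be the bookkeeping that the normalisation and the possible coordinate swap respect the conventions of the table; the trace computation itself is routine.
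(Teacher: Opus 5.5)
Your proposal is correct and follows essentially the same route as the paper. Both arguments put the images on the axes, use irreducibility to get a non-axis kernel normalised to $\spn11$, and apply the trace condition to a product of the form $\pwi{\spn1t}{\spn10}\,\pwi{\spn1s}{\spn01}$ to force the ratios to be roots of unity. A coordinate swap then handles the case $E=\{\spn01\}$; this swap replaces $S$ by $\{s^{-1}:s\in S\}$, the paper's $\bar S$, which still contains $1$. Your extra checks, that $i+\card S\gs2$ and that complete monoids with the same kernels and images coincide, just make explicit what the paper leaves implicit.
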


\begin{pf}
By replacing $M$ with an equivalent \prm, we can assume that the two images of $M$ are $\spn10$ and $\spn01$. Then $M$ must have a kernel different from $\spn10$ and $\spn01$, since otherwise $M$ is reducible. By rescaling one coordinate we can assume that one of the kernels is $\spn11$. Now $M$ contains the projection
\[
\fmx1{-1}00.
\]
If $\spn1w$ is also a kernel with $w\neq0$, then $M$ also contains the projection
\[
\fmx00{-w}1.
\]
So $M$ also contains
\[
\fmx1{-1}00\fmx00{-w}1=\fmx w{-1}00,
\]
and the trace condition shows that $w$ is a root of unity.

So
\[
\kerm M=\lset{\spn1s}{s\in S}\cup T
\]
for some finite set $S$ of roots of unity with $1\in S$, and for some $T\subseteq\sett{\spn10,\spn01}$. So $M=\as Si$ for some $i$, except in the case where $T=\sett{\spn01}$, in which case $M$ is equivalent to $\as{\bar S}1$ (where $\bar S=\lset{w^{-1}}{w\in S}$).
\end{pf}

Now we can restriction attention to \prms with at least three images, and (by duality) at least three kernels.

\begin{propn}\label{int0}
Suppose $M$ is a finite \prm on $\bbc^2$, and that $a,b,c$ are distinct elements of $\kerm M$, and $d,e,f$ are distinct elements of $\imm M$. Then $\{a,b,c\}\cap\{d,e,f\}\neq\emptyset$.
\end{propn}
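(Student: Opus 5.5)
The plan is to reduce everything to cross-ratios on the projective line and then obtain a contradiction from elementary plane geometry. Suppose for a contradiction that $\{a,b,c\}\cap\{d,e,f\}=\emptyset$.

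First I show that $M$ contains $\pwi KL$ for every $K\in\{a,b,c\}$ and $L\in\{d,e,f\}$, without assuming that $M$ is \cmp or irreducible. Take a projection $q\in M$ with image $L$ and a projection $p\in M$ with kernel $K$. Since $\ker(q)$ is a kernel and $\im(p)$ is an image, they are different lines, so $qp\neq0$. In dimension $2$ this means $qp$ has kernel $K$ and image $L$, and it is $\tr{qp}$ times $\pwi KL$. The trace condition says $\tr{qp}$ is a root of unity, since it is non-zero because $K\neq L$. So a suitable power of $qp$ is $\pwi KL$, exactly as at the end of the proof of \cref{irrconds}.

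Next I compute traces. Identify lines in $\bbc^2$ with points of $\bbc\cup\{\infty\}$. For a line $K$ let $\varphi_K$ be a linear functional with kernel $K$, and for a line $L$ let $\ell$ span $L$. Then $\pwi KL(v)=\varphi_K(v)\ell/\varphi_K(\ell)$. A direct computation gives
\[
\tr{\pwi{K}{L}\pwi{K'}{L'}}=\frac{\varphi_{K}(L')\,\varphi_{K'}(L)}{\varphi_{K}(L)\,\varphi_{K'}(L')},
\]
which is a cross-ratio of $K,K',L,L'$. By the first step and the trace condition, this is a root of unity, and so has modulus $1$, for all $K,K'\in\{a,b,c\}$ and $L,L'\in\{d,e,f\}$.

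Now apply a change of coordinates, i.e.\ pass to an equivalent \prm, which only acts on lines by a M\"obius transformation. Arrange that $d=0$, $e=\infty$ and $f=1$, so that $a,b,c$ are finite points different from $0$ and $1$.
\begin{itemize}
\item
Taking $L,L'=0,\infty$, the cross-ratio becomes $b/a$ (up to inversion). So $|a|=|b|=|c|=r$ for some $r>0$.
\item
Taking $L,L'=0,1$, the cross-ratio is $\frac{(1-a)\,b}{a\,(1-b)}$. Hence $|1-z|/|z|$ takes the same value for $z=a,b,c$. Combined with the first condition, this gives $|1-z|=sr$ for a fixed $s>0$.
\end{itemize}
So $a,b,c$ lie on the circle $|z|=r$ and on the circle $|z-1|=sr$. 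These circles have different centres, so they meet in at most two points. This contradicts the assumption that $a,b,c$ are distinct.

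The main obstacle is getting the trace formula right in projective coordinates, in particular the normalisation and the cases involving the point $\infty$. To avoid fuss, I would check the two specific traces directly with explicit $2\times2$ matrices, as in the proof of \cref{asi}, rather than relying on the general formula.
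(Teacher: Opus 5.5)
Your first step has a genuine gap. The contradiction hypothesis only says that the six lines $a,\dots,f$ are distinct. It says nothing about the \emph{other} kernels and images of $M$. So when you choose a projection $q$ with image $L$ and a projection $p$ with kernel $K$, nothing stops $\ker(q)$ and $\im(p)$ from being the same line. In that case $qp=0$ and you get nothing.

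This cannot be repaired, because without completeness the statement is false. Take distinct lines $a,b,c,d,e,f,g$ in $\bbc^2$, and let $M$ be generated by $P_1=\{\pwi gd,\pwi ge,\pwi gf\}$ and $P_2=\{\pwi ag,\pwi bg,\pwi cg\}$. For $p,p'\in P_1$ and $q,q'\in P_2$ one checks:
\begin{itemize}
\item $pp'=p$, $qq'=q'$ and $pq=0$;
\item $qp$ is a non-zero multiple of a fixed nilpotent $N$ with $\ker(N)=\im(N)=g$ (non-zero because $\ker(q)\neq\im(p)$);
\item $Np=qN=N$ and $Nq=pN=N^2=0$.
\end{itemize}
So $M$ consists of $1$, the six generators, the products $qp$, and $0$. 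It is finite, with $\kerm M=\{a,b,c,g\}$ and $\imm M=\{d,e,f,g\}$, yet $\{a,b,c\}\cap\{d,e,f\}=\emptyset$. Indeed no element of this $M$ has kernel in $\{a,b,c\}$ and image in $\{d,e,f\}$, so your claimed first step fails outright.

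The missing ingredient is completeness. The paper's proof uses it silently (``$M$ contains the projections\dots''). The proposition is only applied, in the proof of \cref{classify}, to irreducible $M$, and these are \cmp by \cref{irrconds}.

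Once you assume $M$ is \cmp, you get $\pwi KL\in M$ for all $K\in\{a,b,c\}$ and $L\in\{d,e,f\}$ directly, so your first step is unnecessary. The rest of your argument is then correct and is essentially the paper's. The paper normalises the three kernels to $\spn10,\spn01,\spn1{-1}$ and obtains $|x|=|y|=|z|$ and $|x+1|=|y+1|=|z+1|$ for the images $\spn x1,\spn y1,\spn z1$. You normalise the three images instead and obtain the same two-circle contradiction for the kernels.
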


\begin{pf}
Suppose for a contradiction that $a,b,c,d,e,f$ are distinct. Without loss of generality we can assume that $a,b,c$ are the lines $\spn10$, $\spn01$ and $\spn1{-1}$, and we write $d=\spn x1$, $e=\spn y1$, $f=\spn z1$ for distinct $x,y,z\neq0,-1$. Now $M$ contains the projections
\[
\fmx10{x^{-1}}0,\ \fmx0y01,\ \fmx{\frac x{x+1}}{\frac x{x+1}}{\frac1{x+1}}{\frac1{x+1}},
\]
and the trace condition implies that $y/x$ and $(y+1)/(x+1)$ are both roots of unity. Similarly $z/x$ and $(z+1)/(x+1)$ are roots of unity. But this means in particular that $|x|=|y|=|z|$ and $|x+1|=|y+1|=|z+1|$, which is impossible for three distinct complex numbers.
\end{pf}

\begin{propn}\label{int1}
Suppose $M$ is a finite \prm on $\bbc^2$, with $\card{\imm M}=\card{\kerm M}=3$ and $\card{\imm M\cap \kerm M}=1$. Then $M$ is isomorphic to $\bw w$ for some root of unity $w\neq\pm1$.
\end{propn}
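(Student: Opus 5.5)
Presumably ``isomorphic'' here means ``equivalent''; I will aim to show that $M$ is equivalent to $\bw w$. The plan is to normalise coordinates using the line in $\imm M\cap\kerm M$, then use the trace condition to pin down the remaining lines. Finally I use completeness (\cref{irrconds}) to identify $M$ with $\bw w$, since $M$ is then determined by $\kerm M$ and $\imm M$. Irreducibility is automatic, because there are three images and three kernels and they do not all share a common line; so \cref{irrconds} applies.

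First I normalise. Let the common line be $\spn11$, so $\imm M=\{\spn11,d,e\}$ and $\kerm M=\{\spn11,b,c\}$, with $b,c,d,e$ distinct from each other and from $\spn11$. We may apply an element of $\gl2\bbc$ fixing $\spn11$, which gives two more degrees of freedom on lines. Use them to send $d$ to $\spn10$ and $b$ to $\spn01$. This is possible since $\spn11,\spn10,\spn01$ are three distinct lines, and projective transformations act triply transitively on lines. Now write $e=\spn1u$ with $u\neq0,1$ and $u\neq\infty$ (that is, $e\ne b$), and $c=\spn v1$ with $v\neq0,1$.

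Next I apply the trace condition. By completeness, $M$ contains $\pwi KL$ for every kernel $K$ and image $L$ with $K\neq L$, and the trace of a product of two projections is $0$ or a root of unity. For $p=\pwi{K_1}{L_1}$ and $q=\pwi{K_2}{L_2}$, with $L_1$ spanned by $\ell_1$, $K_2$ spanned by $k_2$, and so on, one computes
\[
\tr{pq}=\frac{\det(k_1,\ell_2)\det(k_2,\ell_1)}{\det(k_1,\ell_1)\det(k_2,\ell_2)},
\]
a cross-ratio type expression. Running over the available pairs gives a small system of equations. The pairs $(\spn01,\spn10)$ with $(\spn11,\spn1u)$, and similar ones, should give that quantities like $u-1$, $1-v$ and $uv$ (more precisely, suitable ratios of them) are roots of unity. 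The aim is to derive two relations of the form $|u-1|=1$ and $|1-v|=1$, together with a relation linking $u$ and $v$. In the target family these hold with $u=1+w$ and $v=1-w$, where $w$ is a root of unity.

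The main obstacle is showing that the root-of-unity constraints force exactly $v=2-u$, that is, $u=1+w$ and $v=1-w$ with the same root of unity $w$. The first thing I would try is the pair of traces
\[
\tr{\pwi{\spn v1}{\spn10}\,\pwi{\spn01}{\spn1u}}
\quad\text{and}\quad
\tr{\pwi{\spn v1}{\spn1u}\,\pwi{\spn11}{\spn10}}.
\]
Each yields a statement that an expression like $uv/(u-1)$ or $(1-v)/(u-1)$ has modulus $1$. Combining such modulus equalities should give circle intersections, as in the proof of \cref{int0}: $u$ lies on the circle $|u-1|=1$ and on a second circle depending on $v$, and these meet in at most two points. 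I would check that one intersection point gives $v=2-u$. The other intersection point would have to be excluded, either by a further trace that is not a root of unity or because it collapses two lines. Once $u=1+w$ and $v=1-w$, we need $w$ to be a root of unity with $w\neq\pm1$; the conditions $w\ne -1$ and $w\neq 1$ are exactly $u\neq0$ and $v\neq0$. Then $\kerm M$ and $\imm M$ coincide with those of $\bw w$, and completeness gives $M=\bw w$.
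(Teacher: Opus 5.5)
Your normalisation and overall strategy are the paper's. However, the proposal never carries out the one substantive step, and what it says about that step is inaccurate. The two traces you single out are actually $1-uv$ and $(u-1)/(uv-1)$, not expressions like $uv/(u-1)$ or $(1-v)/(u-1)$. What you need is that $1-u$, $1-v$ and $1-uv$ are roots of unity. These come from
\[
\tr{\pwi{\spn01}{\spn10}\,\pwi{\spn11}{\spn1u}}=\frac{-1}{u-1},\qquad
\tr{\pwi{\spn v1}{\spn10}\,\pwi{\spn01}{\spn11}}=1-v,\qquad
\tr{\pwi{\spn v1}{\spn10}\,\pwi{\spn01}{\spn1u}}=1-uv .
\]
All three are non-zero: $u\neq1$, $v\neq1$, and $uv\neq1$ because the kernel $\spn v1$ differs from the image $\spn1u$. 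Writing $u=1-x$ and $v=1-w$, these are exactly the paper's conditions that $x$, $w$ and $w+x-wx$ are roots of unity.

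Your circle idea then closes the argument, and the only other intersection point is $u=0$. The sets $|u-1|=1$ and $|uv-1|=1$ are circles in the $u$-plane centred at $1$ and $1/v$. Both pass through $0$, and they are distinct because $v\neq1$, so they meet only in $0$ and $2-v$. The point $2-v$ lies on both since $|1-v|=1$ and $|(2-v)v-1|=|1-v|^2=1$. Since $u\neq0$ (as $\spn1u\neq\spn10$), we get $u=2-v$, i.e.\ $u=1+w$ and $v=1-w$ with $w=1-v$ a root of unity. Moreover $w\neq1$ because $v\neq0$, and $w\neq-1$ because $u\neq0$. The paper compresses this step into `the only way this can happen is if $x=-w$'; without it your proposal is only a plan.

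Separately, your claim that irreducibility (and hence completeness) is automatic is false. An invariant line only needs to be the kernel or the image of each projection in $M$, and $\spn11$ can play this role. Take the monoid generated by $\pwi{\spn01}{\spn11}$, $\pwi{\spn v1}{\spn11}$, $\pwi{\spn11}{\spn10}$ and $\pwi{\spn11}{\spn1u}$. It is finite by the reducible example in \cref{irrsec}, and it has exactly your three kernels and three images, meeting only in $\spn11$. But it leaves $\spn11$ invariant and does not contain $\pwi{\spn01}{\spn10}$, so it is not complete and hence not equivalent to any $\bw w$. So irreducibility (or completeness) has to be assumed as a hypothesis. The paper does this tacitly when it asserts that $M$ contains the listed projections. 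This is harmless, because the proposition is only applied to irreducible $M$ in the proof of \cref{classify}.
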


\begin{pf}
Without loss of generality we assume that $\spn11,\spn01\in\kerm M$, while $\spn11,\spn10\in\imm M$. Write the remaining kernel as $\spn{1-w}1$ and the remaining image as $\spn1{1-x}$, where $w,x\neq0,1$. Then $M$ contains the projections
\[
\fmx1010,\ \fmx10{1-x}0,\ \fmx1{-1}00,\ \fmx1{w-1}00,
\]
and the trace condition shows that $w$, $x$ and $w+x-wx$ are roots of unity. The only way this can happen is if $x=-w$, which means in particular that $w\neq-1$, so $M=\bw w$.
\end{pf}

Now we consider the case where $\card{\imm M\cap \kerm M}\gs2$. We fix a primitive cube root of unity $\omega\in\bbc$.

\begin{lemma}\label{int2}
Suppose $M$ is a finite \prm on $\bbc^2$, and that $\spn10$ and $\spn01$ are both images and kernels of $M$.
\begin{enumerate}
\item
Suppose $\spn{-1}1$ is a kernel of $M$. Then the only possible other images of $M$ are $\spn{-1}1$, $\spn\omega1$ and $\spn{\omega^2}1$.
\item
Suppose $\spn{-1}1$ is an image of $M$. Then the only possible other kernels of $M$ are $\spn{-1}1$, $\spn\omega1$ and $\spn{\omega^2}1$.
\item
Suppose $\spn\omega1$ is a kernel of $M$. Then the only possible other images of $M$ are $\spn{-1}1$, $\spn\omega1$ and $\spn{-\omega^2}1$.
\item
Suppose $\spn{\omega^2}1$ is a kernel of $M$. Then the only possible other images of $M$ are $\spn{-1}1$, $\spn{\omega^2}1$ and $\spn{-\omega}1$.
\end{enumerate}
\end{lemma}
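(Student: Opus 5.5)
The approach is to reduce everything to the trace condition: a product of two rank-$1$ elements of $M$ has trace $0$ or a root of unity. The lines in question are then pinned down by elementary modulus computations in $\bbc$.

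I will use completeness of $M$. In the context where the lemma is used ($M$ finite and irreducible), \cref{irrconds} gives that $\pwi KL\in M$ whenever $K\in\kerm M$, $L\in\imm M$ and $K\neq L$. If one wants the lemma without irreducibility, this is the point needing care: one would instead run the argument of \cref{irrconds} (or of Claim 1 in \cref{cardprm}) to produce some element with the required kernel and image. I would state the lemma in the irreducible setting where it is applied.

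\emph{Part (1).} Let $\spn x1$ be an image of $M$ other than $\spn10$ and $\spn01$, so $x\neq0$. By completeness, $M$ contains the four projections
\[
\pwi{\spn{-1}1}{\spn10}=\fmx1100,\quad
\pwi{\spn{-1}1}{\spn01}=\fmx0011,\quad
\pwi{\spn01}{\spn x1}=\fmx10{x^{-1}}0,\quad
\pwi{\spn10}{\spn x1}=\fmx0x01 .
\]
The product of the first and third has trace $1+x^{-1}$. The product of the second and fourth has trace $1+x$. Hence each of $1+x$ and $1+x^{-1}$ is $0$ or a root of unity.

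If $1+x=0$ then $x=-1$. Otherwise both have modulus $1$, and $|1+x|=|x|\,|1+x^{-1}|$ forces $|x|=1$. Then $|x|=|1+x|=1$ leaves only $x\in\{\omega,\omega^2\}$ (the intersection of two unit circles). So the other images are among $\spn{-1}1,\spn\omega1,\spn{\omega^2}1$, as claimed.

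\emph{Part (2)} is the dual statement. Passing to $M^\ast$ interchanges kernels and images. In dual coordinates the lines $\spn10,\spn01,\spn{-1}1$ (and likewise $\spn\omega1,\spn{\omega^2}1$) are carried to lines of the same shape after an obvious coordinate swap. Alternatively, repeat the computation of part (1) with the roles of kernel and image exchanged, using $\pwi{\spn y1}{\spn10}$, $\pwi{\spn y1}{\spn01}$ and the projections with image $\spn{-1}1$.

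\emph{Parts (3) and (4).} These reduce to part (1) by an equivalence. The diagonal map $(u,v)\mapsto(-\omega^2u,v)$ fixes $\spn10$ and $\spn01$, sends $\spn\omega1$ to $\spn{-1}1$, and sends $\spn x1$ to $\spn{-\omega^2x}1$. Conjugating $M$ by this map and applying part (1) gives $-\omega^2x\in\{-1,\omega,\omega^2\}$, that is $x\in\{\omega,-\omega^2,-1\}$, which is (3). Part (4) is the same with $\omega$ and $\omega^2$ swapped (use $(u,v)\mapsto(-\omega u,v)$).

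I expect the main obstacle to be the justification that the specific projections above lie in $M$, i.e. the use of completeness. The trace computations themselves are routine.
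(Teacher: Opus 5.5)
Your proof is correct and is essentially the paper's: you use the same four projections and the trace condition to force $1+x$ and $1+x^{-1}$ to be $0$ or roots of unity, and part (2) is by duality; your diagonal rescalings $(u,v)\mapsto(-\omega^{2}u,v)$ and $(u,v)\mapsto(-\omega u,v)$ are a tidy way of carrying out what the paper dismisses as ``proved in a similar way'' for (3) and (4). You are also right that completeness is being used---the paper silently assumes these $\pwi KL$ lie in $M$, which holds for the irreducible $M$ to which the lemma is applied, by \cref{irrconds}---and some such hypothesis is genuinely needed, since the finite reducible monoid generated by $\pwi{\spn01}{\spn10}$, $\pwi{\spn10}{\spn01}$, $\pwi{\spn{-1}1}{\spn10}$ and $\pwi{\spn10}{\spn21}$ violates (1), so your suggested fallback via the argument of \cref{irrconds} could not work without irreducibility.
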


\begin{pf}
For (1), suppose $\spn x1$ is an image, with $x\neq0,-1$. Then $M$ contains the projections
\[
\fmx0011,\ \fmx0x01,\ \fmx1100,\ \fmx10{x^{-1}}0
\]
and the trace condition implies that both $x+1$ and $(x+1)/x$ are roots of unity. The only way this can happen is if $x=\omega^{\pm1}$.

(2) now follows by duality, and (3) and (4) are proved in a similar way.
\end{pf}

Now we are ready to prove our main result.

\begin{pf}[Proof of \cref{classify}]
If $\card{\imm M}$ or $\card{\kerm M}$ equals $2$, then by \cref{onlytwo} either $M$ or its dual is equivalent to $\as Si$ for some $S,i$. So assume that $\card{\imm M}\gs3$ and $\card{\kerm M}\gs3$.

If $\card{\imm M\cup \kerm M}=3$, then $M$ is equivalent to $\mna2$, so assume that $\card{\imm M\cup \kerm M}\gs4$. \cref{int0} shows that $\imm M\cap \kerm M\neq\emptyset$; in fact, if $\calk\subseteq\kerm M$ and $\call\subseteq\imm M$ with $\card\calk=\card\call=3$, then $\calk\cap\call\neq\emptyset$. \cref{int1} shows that if $\card{\imm M\cap \kerm M}=1$ (forcing $\card{\imm M}=\card{\kerm M}=3$) then $M$ is isomorphic to $\bw w$ for some $w$. So assume $\card{\imm M\cap \kerm M}\gs2$.  Then without loss of generality we can assume that $\spn10$ and $\spn01$ are both images and kernels of $M$, and that $\spn{-1}1$ is also an image. If there are no other images, then \cref{int2}(2) shows that (up to equivalence) $M$ is one of $\ci0,\ci1,\ci2,\ci3$. So we can assume that there are at least four images, and dually we can assume that there are at least four kernels. So at least two of $\spn{-1}1$, $\spn\omega1$ and $\spn{\omega^2}1$ are kernels. But if $\spn\omega1$ and $\spn{\omega^2}1$ are kernels, then \cref{int2}(3,4) show that there can only be three images, a contradiction. So (\wolog) the images of $M$ are $\spn10$, $\spn01$, $\spn{-1}1$, $\spn\omega1$. Now \cref{int2}(2,3) show that the kernels of $M$ are also $\spn10$, $\spn01$, $\spn{-1}1$, $\spn\omega1$, so that $M=\ci4$.
\end{pf}

\begin{rmks}
\begin{enumerate}[beginthm]
\item
From \cref{classify} we can deduce a classification of \emph{minimal} irreducible \prms on $\bbc^2$; that is, those with no irreducible projection submonoid. First we observe that an irreducible \prm $M$ is minimal \iff it has exactly two images and two kernels. The ``if'' part follows from \cref{irrconds}. For the ``only if'', note that a \cmp \prm with exactly two kernels and two images is irreducible except in the case where the two kernels are the same as the two images. If $M$ is irreducible and has at least three images or at least three kernels, then we can easily choose two kernels $k,l$ and two images $s,t$ such that $\{k,l\}\neq\{s,t\}$. Then the \prm with kernels $k,l$ and images $s,t$ is irreducible, so $M$ is not minimal.

We can read off from \cref{onlytwo} the irreducible \prms with exactly two images and two kernels; they are the monoids $\as S0$ with $\card S=2$, and $\as{\{1\}}1$.
\item
We can also easily read off the self-dual irreducible \prms: these are
\[
\as S0\ (\text{for }\card S=2),\quad \as{\{1\}}1,\quad \bw{\mathrm i},\quad\ci0,\ \ci1,\ \ci5.
\]

\item
From the classification over $\bbc$ we can immediately deduce a classification over $\bbr$: if $M$ is a finite irreducible \prm on $\bbr^2$, then $M$ or its dual is equivalent to one of the monoids
\[
\as{\{\pm1\}}0,\ \as{\{1\}}1,\ \as{\{\pm1\}}1,\ \as{\{1\}}2,\ \as{\{\pm1\}}2,\ \ci0,
\]
of orders $9$, $6$, $14$, $8$, $18$, $20$ respectively. The first five of these are all submonoids of $\mnb2^2=\as{\{\pm1\}}2$, and the last is just $\mna2$.
\end{enumerate}
\end{rmks}

\section{Finite irreducible \prms on $\bbr^3$}\label{r3classsec}

In this section we consider finite irreducible \prms on $\bbr^3$. In \cref{classsec} we saw that a finite irreducible \prm on $\bbr^2$ is (up to duality) equivalent to a submonoid of $\mna2$ or $\mnb2^2$. Here we prove the corresponding statement for $\bbr^3$.

\begin{thm}\label{r3classn}
Suppose $M$ is a finite irreducible \prm on $\bbr^3$. Then $M$ or its dual is equivalent to a submonoid of $\mna3$ or $\mnb3^2$.
\end{thm}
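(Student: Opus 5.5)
The plan is to convert finiteness into local numerical constraints on pairs of projections, and then assemble these constraints into a rigid combinatorial configuration. That configuration should turn out to be a sub-configuration of either the $A_3$ or the $B_3$ (with $t=2$) line/plane arrangement, or of its dual.

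\emph{Step 1: the pairwise constraint.} Write each projection in $M$ as $p=1-k_p\otimes\phi_p$, where $k_p$ spans $\ker(p)$, $\phi_p$ is a functional with kernel $\im(p)$, and $\phi_p(k_p)=1$. For two projections $p,q$ the product $pq$ fixes the line $\im(p)\cap\im(q)$ and kills $\ker(q)$. Hence its third eigenvalue is $\tr{pq}-1=1-\phi_p(k_p)-\phi_q(k_q)+\phi_p(k_q)\phi_q(k_p)-\dots$, which after simplification is $\phi_p(k_q)\phi_q(k_p)-1$. Since $M$ is finite and everything is real, this eigenvalue lies in $\{0,\pm1\}$. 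So the ``Cartan product'' satisfies
\[
c(p,q):=\phi_p(k_q)\phi_q(k_p)\in\{0,1,2\},
\]
exactly as for finite Coxeter groups. However, $c(p,q)=0$ need not be symmetric in the vanishing of the two factors; this asymmetry is what permits the non-self-dual examples. I would also use \cref{irrconds}: $M$ is \cmp, every kernel--image pair with $K\nls L$ gives a projection, so these constraints apply to all pairs $(K,L)$.

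\emph{Step 2: reduction to planes.} Fix an image $L$ and restrict to $L$ the projections whose kernels lie in $L$. These generate a finite \prm on the plane $L$. Its kernels are the kernels of $M$ inside $L$, and its images are the lines $\im(q)\cap L$. By the real $2$-dimensional classification (the remark after \cref{classify}), every such restricted configuration is, up to duality, a sub-configuration of $\mna2$ (three lines) or $\mnb2^2$ (four lines at $45^\circ$ in suitable coordinates). Dually, fixing a kernel $K$ and passing to $V/K$ gives the same conclusion for quotient configurations. This bounds the number of kernels in any image plane and of images through any kernel line, and it fixes the local cross-ratios.

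\emph{Step 3: global assembly.} Irreducibility gives kernels spanning $V$ and images with trivial intersection (\cref{irrconds}). I would choose three kernels spanning $V$ and use Step 1 to normalise coordinates, first deciding whether some kernel lies in no image or some image contains no kernel (this choice decides between $M$ and $M^\ast$). After normalising, the pair conditions $c\in\{0,1,2\}$, together with the planar restrictions of Step 2, should force every kernel vector to be proportional to some $e_i-e_j$ (type $A$, realised inside $V^+$) or to some $e_i$ or $e_i\pm e_j$ (type $B$ with $Z=\{\pm1\}$). They should likewise force every image to be one of the corresponding hyperplanes of $\mna3$ or $\mnb3^2$. Completeness then makes $M$ generated by projections that all lie in $\mna3$ or $\mnb3^2$, so $M$ is a submonoid.

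The main obstacle is Step 3. It amounts to a Coxeter-style classification of ``asymmetric Cartan data'' in rank $3$, and one must exclude configurations such as an $H_3$-like one (with $c$ taking values related to the golden ratio), which Step 1 should kill. I would first try to proceed exactly as in the classification of rank-$3$ root systems: build a graph on a spanning triple of kernels, using the edge labels $c$. Step 1 excludes cycles and large labels. Extra kernels and images are then attached one at a time, with \cref{int0}-style norm arguments used to rule out any additions beyond the $A_3$/$B_3$ arrangements.
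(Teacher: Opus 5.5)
\textbf{There is a genuine gap: the proof of the theorem is your Step 3, and the proposal contains no argument there.} You only expect that the constraints ``should force'' the $A_3$/$B_3$ arrangements. Worse, the data you plan to feed into Step 3 (two-projection conditions plus planar restrictions and quotients) provably cannot suffice.

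Take the coordinate planes as images and $\spa111$, $\spa1{-1}0$, $\spa10{-1}$ as kernels, which is the configuration of \cref{next3im}. Every pair of projections with distinct kernels and distinct images has $c(p,q)\in\{0,-1\}$. Every restriction to an image plane, and every quotient by a kernel line, is a finite (reducible) planar monoid. Yet the product of three projections in \cref{next3im} has eigenvalue $-2$, so the complete monoid is infinite.

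In Coxeter theory, excluding cycles such as $\tilde A_2$ relies on positive-definiteness of a form, not on rank-$2$ data, and no such form exists here. Also, your labels $c$ are attached to kernel--image pairs, not to pairs of kernels, so a labelled graph on three kernels is not even well defined.

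What is missing is a global organising principle together with products of three or more projections. The paper supplies this through condition \starr and the notion of a split monoid. Under \starr, counting images gives a submonoid of $\mnb3^2$ (three images), a submonoid of $\mna3$ (four images), or otherwise a split, hence reducible, monoid (\cref{star3,star4,star5}). If both $M$ and $M^\ast$ violate \starr, then \cref{notirred}, which rests on the product computations of \cref{r32,next3im,nextnext3im}, limits kernels and images to at most four each. Two further triple products then give a contradiction.

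\textbf{Your local steps also contain errors.}

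In Step 1, on $\bbr^3$ one has $\tr{pq}=3-2+c(p,q)=1+c(p,q)$. So the third eigenvalue of $pq$ is $c(p,q)$ itself, not $c(p,q)-1$, and finiteness gives $c\in\{0,\pm1\}$. Moreover, if $\ker p\neq\ker q$ and $\im(p)\neq\im(q)$, then $pq-1=-k_p(\phi_p-\phi_p(k_q)\phi_q)-k_q\phi_q$ has rank $2$. Hence $c=1$ forces a Jordan block at eigenvalue $1$, which is exactly the phenomenon handled in \cref{r32}. The correct condition for such pairs is therefore $c(p,q)\in\{0,-1\}$. Your range $\{0,1,2\}$ would exclude $\mnb3^2$ itself, since $p^1_{12}$ and $p^{-1}_{21}$ have $c=-1$. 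The Coxeter analogy and the golden-ratio concern are misplaced.

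In Step 2, the monoid obtained by restricting to an image plane need not be irreducible. Reducible finite planar projection monoids (the example in \cref{irrsec}) can have arbitrarily many kernels and images. So the real planar classification bounds nothing until you prove the restriction is irreducible.
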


Throughout this section we assume that $M$ is a finite \cmp \prm on $\bbr^3$, the sum of the kernels of $M$ is $\bbr^3$, and the intersection of the images of $M$ is $0$. 
We say that $M$ is \emph{split} if there are $K\in\kerm M$ and $A\in\imm M$ such that all the images of $M$ other than $A$ contain $K$, and all the kernels of $M$ other than $K$ are contained in $A$.

We will view elements of $M$ as $3\times3$ matrices acting on $\bbr^3$ on the left. Given $a,b,c\in\bbr$, we write $\spa abc$ for the span of the vector $\left(\begin{smallmatrix}a\\b\\c\end{smallmatrix}\right)$.

To begin with, we consider the case where $M$ satisfies the following additional condition:
\[
\text{if $K\in\kerm M$ and $A,B,C\in\imm M$ with $A\cap B\cap C=0$, then $K\subset A\cup B\cup C$.}\tag*{\starr}
\]

\begin{lemma}\label{star3}
Suppose $M$ satisfies \starr and $\card{\imm M}=3$. Then either $M$ is split, or 
$M$ is equivalent to a submonoid of $\mnb3^2$.
\end{lemma}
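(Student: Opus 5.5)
The plan is to put $M$ into coordinates adapted to its three images, and then use the trace condition on suitable products to pin down the kernels. Let the images be $A,B,C$. Since the intersection of the images is $0$, we have $A\cap B\cap C=0$. So we can choose a basis $e_1,e_2,e_3$ with $A=\lan e_2,e_3\ran$, $B=\lan e_1,e_3\ran$, $C=\lan e_1,e_2\ran$; these are exactly the images of $\mnb3^2$. Condition \starr then says that every kernel of $M$ lies in one of the coordinate planes. So each kernel is either a coordinate axis $\lan e_i\ran$ or an ``off-axis'' line $\lan e_i-xe_j\ran$ with $x\neq0$. Since $M$ is \cmp, $M$ is generated by the projections $\pwi KL$ with $K\in\kerm M$, $L\in\imm M$ and $K\nls L$. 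Hence it suffices to show that, after rescaling the coordinates (an equivalence), every kernel has the form $\lan e_i\ran$ or $\lan e_i\pm e_j\ran$. In that case every generating projection is one of the $p_i$ or $p_{ij}^{\pm1}$, and $M$ is a submonoid of $\mnb3^2$.

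\emph{Step 1: one plane at a time.} Fix the plane $\lan e_i,e_j\ran$. Suppose $\lan e_i-xe_j\ran$ and $\lan e_i-ye_j\ran$ are two off-axis kernels in it. Both avoid the images $\lan e_j,e_k\ran$ and $\lan e_i,e_k\ran$, so by completeness $M$ contains the corresponding projections $p_{ij}^{x}$ and $p_{ji}^{y^{-1}}$ in the notation of \cref{typebsec}. Their product has rank $2$, fixes $e_k$, and acts on $\lan e_i,e_j\ran$ as a rank-one map. As in the trace-group discussion of \cref{classsec}, its nonzero trace on that plane must be a root of unity. I expect this trace to be $x/y$ up to sign, which forces $y=\pm x$ over $\bbr$. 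So the off-axis kernels in each coordinate plane are contained in $\{\lan e_i-\la_{ij}e_j\ran,\lan e_i+\la_{ij}e_j\ran\}$ for a single $\la_{ij}>0$.

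\emph{Step 2: the graph.} Draw a graph $\Gamma$ on $\{1,2,3\}$ with an edge $ij$ whenever the plane $\lan e_i,e_j\ran$ contains an off-axis kernel. Suppose $\Gamma$ has an isolated vertex $i$. Then every kernel other than $\lan e_i\ran$ lies in $\lan e_j,e_k\ran=A_i$, the image not containing $e_i$. Moreover $\lan e_i\ran$ must itself be a kernel, because the kernels sum to $\bbr^3$. Every other image contains $e_i$. So $M$ is split, with $K=\lan e_i\ran$ and $A=A_i$. Otherwise $\Gamma$ is connected. If $\Gamma$ is a path, we rescale $e_2$ and $e_3$ along the path to make the relevant $\la_{ij}$ equal to $1$, and we are done.

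\emph{Step 3: the triangle case (main obstacle).} If $\Gamma$ is a triangle, rescaling normalises only two of the three parameters. We need the cycle condition $\la_{12}\la_{23}\la_{31}=1$, with the $\la$'s oriented consistently. To get it, I would compose three projections around the cycle, of the form $p_{12}^{\la_{12}}p_{23}^{\pm\la_{23}}p_{31}^{\pm\la_{31}}$, choosing the signs so that the kernel/image nondegeneracy conditions hold. This product should be a rank-$2$ element with the same kernel and image as a projection already in $M$. By finiteness, this element acts on its image as a finite-order map; since everything is real and upper-triangular-like, the relevant eigenvalue must be $\pm1$. I expect this eigenvalue to be $\pm\la_{12}\la_{23}\la_{31}$ (or its reciprocal), which gives the condition. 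The real work is this trace computation and the bookkeeping of signs and orientations, including the case where both signs occur in some planes. After that, rescaling finishes the proof as in Step 2.
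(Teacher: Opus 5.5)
Your proposal is correct and follows essentially the paper's route: the images are put on the coordinate planes; non-splitness yields off-axis kernels in at least two planes (the paper only asserts this, and your isolated-vertex argument justifies it); the two-fold product $p_{ij}^xp_{ji}^{y^{-1}}$ of Step~1 pins these down (its trace on $\langle e_i,e_j\rangle$ is exactly $x/y$); and a three-fold product around the triangle handles the remaining plane. Your Step~3 does work, though not quite as you guessed: for any kernels $\langle e_1-ae_2\rangle$, $\langle e_2-be_3\rangle$, $\langle e_3-ce_1\rangle$ of $M$, the product $q=p_{12}^ap_{23}^bp_{31}^c$ always has rank $2$ (no sign choices are needed), and on its image $\langle e_2,e_3\rangle$ it sends $e_2\mapsto be_3$ and $e_3\mapsto ca\,e_2$, so $q^2$ acts there as the scalar $abc$ and finiteness forces $abc=\pm1$, hence $b=\pm1$ after your rescaling; the paper instead multiplies the same three kinds of projection in the reverse order, which gives a rank-one element with trace $-d$.
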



\begin{pf}
By replacing $M$ with an equivalent \prm, we can assume that the images of $M$ are the three coordinate planes. Condition \starr says that every kernel of $M$ is contained in at least one of these planes.

Let's assume that $M$ is not split. Then at least two of the following statements must be true:
\begin{itemize}
\item
there is $a\neq0$ such that $\spa1a0\in\kerm M$;
\item
there is $b\neq0$ such that $\spa10b\in\kerm M$;
\item
there is $c\neq0$ such that $\spa01c\in\kerm M$.
\end{itemize}
By permuting the coordinates if necessary, we assume that the first two statements are true. By rescaling the coordinates if necessary, we can assume that $a=b=1$, so that $\spa110$ and $\spa101$ are kernels of $M$. Now we claim that the only possible kernels of $M$ are
\[
\spa100,\ \spa010,\ \spa001,\ \spa110,\ \spa1{-1}0,\ \spa101,\ \spa10{-1},\ \spa011,\ \spa01{-1},
\]
which is the same as saying that $M$ is a submonoid of $\mnb3^2$.

Suppose first that $\spa1d0$ is a kernel, with $d\neq0,1$. Then $M$ contains the projections
\[
\begin{pmatrix}1&-1&0\\0&0&0\\0&0&1\end{pmatrix},\ \begin{pmatrix}0&0&0\\-d&1&0\\0&0&1\end{pmatrix}
\]
and so contains
\[
\begin{pmatrix}1&-1&0\\0&0&0\\0&0&1\end{pmatrix}\begin{pmatrix}0&0&0\\-d&1&0\\0&0&1\end{pmatrix}=\begin{pmatrix}d&-1&0\\0&0&0\\0&0&1\end{pmatrix},
\]
and hence contains all powers of this matrix. For this matrix to have only finitely many different powers, $d$ must equal $-1$. Similarly, if $\spa10d$ is a kernel of $M$ with $d\neq0,1$, then $d=-1$.

Now suppose $\spa01d$ is a kernel of $M$ with $d\neq0$. Then $M$ contains the projections
\[
\begin{pmatrix}1&0&-1\\0&1&0\\0&0&0\end{pmatrix},\ \begin{pmatrix}1&0&0\\0&0&0\\0&-d&1\end{pmatrix},\ \begin{pmatrix}0&0&0\\-1&1&0\\0&0&1\end{pmatrix},
\]
and so contains
\[
\begin{pmatrix}1&0&-1\\0&1&0\\0&0&0\end{pmatrix}\begin{pmatrix}1&0&0\\0&0&0\\0&-d&1\end{pmatrix}\begin{pmatrix}0&0&0\\-1&1&0\\0&0&1\end{pmatrix}=\begin{pmatrix}-d&d&-1\\0&0&0\\0&0&0\end{pmatrix}
\]
In order for this matrix to have only finitely many powers, $d$ must equal $\pm1$. This proves our claim, and hence proves the \lcnamecref{star3}.
\end{pf}

\begin{lemma}\label{star4}
Suppose $M$ satisfies \starr, and $\card{\imm M}=4$. Then either $M$ is split, or $M$ is equivalent to a submonoid of $\mna3$.
\end{lemma}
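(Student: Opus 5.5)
The plan is to split according to the position of the four images $A,B,C,D$ of $M$. Recall the standing assumptions of this section: $M$ is \cmp, its kernels sum to $\bbr^3$, and its images intersect in $0$. In particular the four images cannot all contain a common line. So either some three of them share a common line, or the four are in general position, meaning every three of them intersect in $0$. The aim is to show that the first case forces $M$ to be split and the second forces $M$ into $\mna3$.

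\emph{Case 1: three images share a line.} Suppose $A\cap B\cap C\supseteq\ell$ for a line $\ell$, while $\ell\nls D$. The triples with trivial intersection are then exactly $\{A,B,D\}$, $\{A,C,D\}$ and $\{B,C,D\}$. Let $K$ be a kernel with $K\nls D$. Applying \starr to these three triples shows that $K$ lies in at least two of $A,B,C$. Any two of these planes meet exactly in $\ell$, so $K=\ell$. Hence every kernel other than $\ell$ lies in $D$. Since the kernels span $\bbr^3$, $\ell$ must itself be a kernel. Every image other than $D$ contains $\ell$, so $M$ is split, with the pair $(\ell,D)$ as the required kernel and image.

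\emph{Case 2: general position.} Here every triple of images intersects in $0$. Applying \starr to the triple that omits a given image shows that each kernel $K$ lies in at least two images: if $K$ lay in at most one image, say $A$, the triple $\{B,C,D\}$ would violate \starr. So each kernel is one of the six lines $X\cap Y$ with $X,Y$ distinct images.

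Next, I will use the fact that four planes in general position in $\bbr^3$ correspond to four points in general position in the dual projective plane. Any two such configurations are related by an element of $\gll{\bbr^3}$. In the realisation of $\mna3$, the images $H_i=\lspan{e_k-e_l}{k,l\neq i}$ for $i=0,\dots,3$ are in general position. Moreover $H_i\cap H_j=\lan e_k-e_l\ran$ where $\{k,l\}=\{0,1,2,3\}\sm\{i,j\}$, so the six pairwise intersections are exactly the kernels of $\mna3$. After an equivalence we may therefore take $\imm M=\imm{\mna3}$ and $\kerm M\subseteq\kerm{\mna3}$.

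Finally, $\ker(p_{ij})=\lan e_i-e_j\ran\nls H_k$ holds exactly when $k\in\{i,j\}$, and then $\pwi{\lan e_i-e_j\ran}{H_k}$ is $p_{ij}$ or $p_{ji}$. So every projection with kernel in $\kerm{\mna3}$ and image in $\imm{\mna3}$ lies in $\mna3$. Since $M$ is generated by its projections, each of which has such a kernel and image, $M$ is a submonoid of $\mna3$.

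The step needing most care is the projective normalisation in Case 2, together with checking that the pairwise-intersection lines match the kernels of $\mna3$ under the identification. The uses of \starr are straightforward.
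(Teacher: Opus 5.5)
Your proposal is correct and follows essentially the same route as the paper. You use the same case split: three images through a common line, where \starr forces every kernel outside the remaining image to be that line, so $M$ is split; versus four images in general position, where \starr forces every kernel to be a pairwise intersection of images, and the configuration is then identified with that of $\mna3$. You are also slightly more careful than the paper in two places: you check that $\ell$ really is a kernel, which the definition of split requires, and you verify explicitly that every projection with these kernels and images is one of the $p_{ij}$.
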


\begin{pf}
We consider two possibilities for the configuration of the images of $M$:
\begin{enumerate}[(a),ref=(\alph*)]
\item\label{special4}
there are three of the images which meet in a line.
\item\label{general4}
any three of the images of $M$ have zero intersection;
\end{enumerate}
First suppose \ref{special4} holds. Write the images of $M$ as $A,B,C,D$, with $B\cap C\cap D$ being a line. Suppose $K$ is a kernel of $M$ which is not contained in $A$. Then \starr applied to $A,B,C$ shows that $K\subset B\cup C$. Symmetrically, $K$ is contained in $B\cup D$ and in $C\cup D$, so that $K=B\cap C\cap D$. Therefore $M$ is split.

Now suppose \ref{general4} holds. Then \starr implies that every kernel of $M$ lies in two of the images (since otherwise there is a kernel $K$ and three images with zero intersection none of which contains $K$, contradicting \starr). So the only possible kernels of $M$ are the intersections of pairs of images. But given any four planes satisfying \ref{general4}, the \cmp \prm with these four planes as its images and their pairwise intersections as its kernels is equivalent to $\mna3$. So $M$ is equivalent to a submonoid of $\mna3$.
\end{pf}

\begin{lemma}\label{star5}
Suppose $M$ satisfies \starr and $\card{\imm M}\gs5$. Then $M$ is split.
\end{lemma}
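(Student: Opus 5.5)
The plan is to prove the statement combinatorially from \starr. First I reformulate \starr: for a kernel $K$, let $N(K)$ be the set of images of $M$ not containing $K$. If $N(K)$ contained three planes $A,B,C$ with $A\cap B\cap C=0$, then \starr would force $K\subset A\cup B\cup C$. Since $K$ is a line, this means $K$ lies in one of $A,B,C$, which is a contradiction. Now, if a set of distinct planes does not all contain a common line, we can pick two of them, with intersection line $L$, and a third not containing $L$; these three have zero intersection. So for every kernel $K$, the set $N(K)$ is a pencil: its members all contain a common line $\ell_K$, where this is vacuous if $\card{N(K)}\ls1$. Also $N(K)\neq\emptyset$ for every $K$, because the intersection of the images is $0$.

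\emph{Case 1: some kernel $K$ has $\card{N(K)}=1$,} say $N(K)=\{A\}$. I claim $M$ is split with respect to $K$ and $A$. By definition every image other than $A$ contains $K$, and there are at least $4$ of these. Suppose $K'\neq K$ is a kernel not contained in $A$. If $K'$ failed to lie in two images $B,C\neq A$, then $A,B,C\in N(K')$. But $B\cap C=K$, and $K\not\subset A$, so $A\cap B\cap C=0$, contradicting the pencil property. Hence $K'$ lies in at least three distinct planes through $K$, forcing $K'=K$. This is a contradiction, so every kernel other than $K$ lies in $A$, and $M$ is split.

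\emph{Case 2: every kernel $K$ has $\card{N(K)}\gs2$.} Here $\ell_K$ is well defined and $\ell_K\neq K$. For each kernel $K$, the images fall into two pencils: $P(K)$, the images through $K$, and $N(K)$, the images through $\ell_K$ but not through $K$. At most one plane, namely $K+\ell_K$, contains both lines. Since there are at least $5$ images, one of $P(K)$, $N(K)$ has at least $3$ members. I will aim to show that this case cannot occur.
\begin{itemize}
\item Take two kernels $K\neq K'$. Suppose $\card{P(K)}\gs3$. Then $K'$ lies in at most one plane of $P(K)$, so at least two planes of $P(K)$ lie in $N(K')$. This gives $\ell_{K'}=K$, and hence $N(K')\supseteq P(K)$ minus at most one plane.
\item Iterating this over all kernels, I expect to pin down the global configuration: all images lie in the union of two pencils through two lines $K_1,K_2$, which are themselves kernels.
\item I then use that the kernels span $\bbr^3$ and that the intersection of the images is $0$. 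This should either reach a contradiction directly, or reduce to a configuration that is in fact split, or one that Case 1 already covers.
\end{itemize}

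The main obstacle is this two-pencil configuration in Case 2. It may not be excluded by \starr and the incidence conditions alone. If it is not, I would fall back on finiteness, as in the proof of \cref{star3}. I would choose coordinates with $K_1$ and $K_2$ as two coordinate axes and write down projections $\pwi KA$ guaranteed by completeness. I would then multiply two or three of them to get a rank-$2$ element. The requirement that this element has only finitely many powers (the real analogue of the trace condition) should bound the number of planes in each pencil to at most $2$. That would contradict $\card{\imm M}\gs5$.
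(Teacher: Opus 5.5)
Your reformulation of \starr is correct. For each kernel $K$, the set $N(K)$ of images not containing $K$ is non-empty and contains no three planes with zero intersection. So if $\card{N(K)}\ge2$, then $N(K)$ is a pencil with axis $\ell_K\neq K$. Case~1 is also correct. Being split forces $\card{N(K)}=1$, so Case~2 really has to be shown impossible.

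\textbf{The gap is that Case~2 is never proved.} After your first bullet you only say what you expect (``iterating'', ``should either reach a contradiction\dots''). The kernels with $\card{P(K)}\le2$ are not treated at all. The finiteness fallback is too vague to check, and as stated it cannot work without \starr: the dual of $\mnb3^2$ is finite and has four images through one of its kernels, so finiteness alone does not cap a pencil at two planes. As written, you have proved the lemma only when some kernel has $\card{N(K)}=1$.

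No finiteness is needed; incidence together with the spanning of the kernels suffices.
\begin{itemize}
\item Suppose some kernel has $\card{P(K)}\ge3$. Your bullet gives $\ell_{K'}=K$ for every other kernel $K'$, so $N(K')\subseteq P(K)$. Equivalently, every plane of $N(K)$ contains $K'$, so $K'=\bigcap N(K)=\ell_K$. Hence $\kerm M\subseteq\{K,\ell_K\}$, which cannot span $\bbr^3$.
\item Otherwise $\card{N(K)}\ge n-2\ge3$ for every kernel, where $n=\card{\imm M}$. Suppose first that all the axes $\ell_K$ equal one line $\ell$. Some image does not contain $\ell$, since the images meet in $0$. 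That image lies in every $P(K)$, so it contains every kernel, a contradiction.
\item For $n\ge6$ the axes must all coincide, because $\card{N(K)\cap N(K')}\ge n-4\ge2$.
\item For $n=5$, suppose $\ell_K\neq\ell_{K'}$. Then the two pencils share exactly one plane and together exhaust $\imm M$. So $P(K)=N(K')\setminus N(K)$, which gives $K=\ell_{K'}$, and symmetrically $K'=\ell_K$. Now apply the same dichotomy to the pairs $(K'',K)$ and $(K'',K')$ for a third kernel $K''$, which must exist. This forces $\ell_{K''}=\ell_K=K'$ and also $\ell_{K''}=\ell_{K'}=K$, which is impossible.
\end{itemize}

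Completed this way, your route is genuinely different from the paper's. The paper enumerates the four configurations of five planes and then inducts on $\card{\imm M}$ through auxiliary monoids $M^-$. Your pencil formulation handles every $n\ge5$ at once and avoids that induction.
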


\begin{pf}
Let $n=\card{\imm M}$. We prove the \lcnamecref{star5} by induction on $n$, starting with the case $n=5$. If we write the images of $M$ as $A,B,C,D,E$, then (permuting these labels if necessary) exactly one of the following holds:
\begin{enumerate}[(a),ref=(\alph*)]
\item\label{general5}
any three of $A,B,C,D,E$ have zero intersection;
\item
$A\cap B\cap C$ is a line, but the intersection of any other set of three images of $M$ is zero;
\item
$A\cap B\cap C$ and $C\cap D\cap E$ are lines, but the intersection of any other set of three images of $M$ is zero;
\item
$B\cap C\cap D\cap E$ is a line, but the intersection of $A$ and any two other images is zero.
\end{enumerate}
We address these cases individually.
\begin{enumerate}[(a)]
\item
In this case \starr gives a contradiction, since if $K\in\kerm M$ then $K$ is contained in at most two of the images $A,B,C,D,E$; so there will be three images (with zero intersection) none of which contains $K$, contradicting \starr.
\item
In this case \starr shows that every kernel $K$ is contained in two of the planes $A,B,D,E$, and also in two of $A,C,D,E$, and also in two of $B,C,D,E$. Hence the line $D\cap E$ is the only possible kernel of $M$, a contradiction.
\item
In this case \starr shows that the only possible kernels of $M$ are the pairwise intersections of the lines $A,B,D,E$. But \starr applied to $A,C,D$ shows that the line $B\cap E$ cannot be a kernel of $M$. Similarly none of $B\cap D$, $A\cap E$, $A\cap D$ are kernels, so that $M$ has at most two kernels, a contradiction.
\item
In this case we argue as in the proof of \cref{star4} to show that the only possible kernel of $M$ which is not contained in $A$ is the line $B\cap C\cap D\cap E$, so that $M$ is split.
\end{enumerate}
Now we suppose $n\gs6$ and assume that the \lcnamecref{star5} holds with $n$ replaced by $n-1$. If we can show that there are $n-1$ images of $M$ that meet in a line, then we can argue as in the proof of \cref{star4} to show that $M$ is split.

Given $n-1$ of the images of $M$ which do not meet in a line, let $M^-$ be the \cmp \prm with these images and with the same kernels as $M$. Then the inductive hypothesis shows that $M^-$ is split, so that $n-2$ of these images meet in a line. So the following statement is true:
\[
\text{given any $n-1$ of the images of $M$, at least $n-2$ of them meet in a line.}\tag*{($\dagger$)}
\]
Write the images of $M$ as $A_1,\dots,A_n$. If $A_1\cap\dots\cap A_{n-1}$ is a line, then we are done, so assume otherwise. By ($\dagger$) some $n-2$ of $A_1,\dots,A_{n-1}$ meet in a line; by reordering if necessary, we assume that $L=A_1\cap\dots\cap A_{n-2}$ is a line, and that $A_{n-1}$ does not contain $L$. Now we just need to show that $A_n$ contains $L$. Applying ($\dagger$) to $A_2,\dots,A_n$, we find that some $n-2$ of these must meet in a line. These $n-2$ planes include at least two of $A_2,\dots,A_{n-2}$ because $n\gs6$, so this line must be $L$. Therefore all but one of the lines $A_2,\dots,A_n$ contain $L$; by assumption $A_{n-1}$ does not, and therefore $A_n$ contains $L$, as required.
\end{pf}

Now we prove some preliminary results to deal with the case where neither $M$ nor its dual satisfies \starr.

\begin{lemma}\label{r32}
Suppose the three coordinate planes are images of $M$, and $\spa111$ and $\spa1ab$ are kernels, for $a,b\in\bbr$ not both equal to $1$. Then at least one of $a$ and $b$ equals $0$, and the other equals $0$ or $-1$.
\end{lemma}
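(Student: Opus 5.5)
The plan is to use completeness to produce explicit projections in $M$, multiply pairs of them, and read off eigenvalue constraints from the finiteness of $M$, exactly as in the proof of \cref{star3}. Write $v=(1,1,1)^T$ and $w=(1,a,b)^T$. For a kernel vector $u$ with $u_i\neq0$, the projection in $M$ with kernel $\lan u\ran$ and image the coordinate plane $\{x_i=0\}$ is
\[
p_i^u=I-\tfrac{1}{u_i}\,u\,e_i^T .
\]
It lies in $M$ by completeness, since $\lan u\ran\nleqslant\{x_i=0\}$. Throughout we use that $M$ is finite, so every element restricted to its image has finite order.

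\emph{Step 1: the two-fold products.} Take $u,u'\in\{v,w\}$ and $i\neq j$ with $u_i,u'_j\neq0$. The product $p_i^u p_j^{u'}$ fixes the line $\{x_i=x_j=0\}$ pointwise and has rank $2$ (generically). So its eigenvalues are $0$, $1$ and
\[
\lambda=\operatorname{tr}(p_i^up_j^{u'})-1=\frac{u'_iu_j}{u_iu'_j}.
\]
This $\lambda$ is a real root of unity, so $\lambda=\pm1$. If $\lambda=1$, the restriction to the image must be diagonalisable, because a nontrivial unipotent Jordan block has infinite order. Taking $u=v$, $u'=w$ over all ordered pairs $(i,j)$ with the relevant coordinates non-zero gives $a\in\{0,\pm1\}$ and $b\in\{0,\pm1\}$. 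It also gives $a/b=\pm1$ when both are non-zero.

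\emph{Step 2: eliminate the remaining sign patterns.} The cases left are $(a,b)\in\{(1,0),(0,1),(1,-1),(-1,1),(-1,-1)\}$, alongside the allowed ones $(0,0),(0,-1),(-1,0)$. The symmetry swapping coordinates $2$ and $3$ fixes $v$ and the coordinate planes, so it suffices to kill $(1,0)$, $(1,-1)$ and $(-1,-1)$.
\begin{itemize}
\item For $(1,0)$ and $(-1,-1)$, I would look for a pair $(i,j)$ with $\lambda=1$ where the product has a genuine Jordan block. I expect $(i,j)=(3,1)$ or $(1,3)$ with $u=v$, $u'=w$, or with the roles reversed.
\item Where no such pair exists, I would use a three-fold product as in \cref{star3}, for instance $p_1^{v}p_3^{w}p_2^{v}$. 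I would compute its nonzero eigenvalues (trace and sum of principal $2\times2$ minors) and show that one of them is real with absolute value different from $1$, or that the product is a non-diagonalisable unipotent.
\end{itemize}

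The main obstacle is Step 2. Step 1 only forces $\lambda=\pm1$, and the bad patterns such as $(1,-1)$ satisfy all two-fold conditions with $\lambda=\pm1$. So I must find the right product exhibiting either a Jordan block or an eigenvalue off the unit circle. I would first try all two-fold products, checking diagonalisability whenever $\lambda=1$. Only failing that would I go to three-fold products mixing both kernels with all three images, as in the $\spa01d$ computation in the proof of \cref{star3}.
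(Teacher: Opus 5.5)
Your strategy is the same as the paper's: completeness supplies the projections $p_i^u$, you form two-fold products, and finiteness forces eigenvalue constraints. Step 1 is fine. There is, however, a genuine gap. Step 2 is where the lemma is actually proved, and you do not carry it out. Moreover, the specific products you propose do not work.

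\begin{itemize}
\item For $(a,b)=(1,0)$ we have $w_3=0$. So $p_3^w$ does not exist, and $p_3^vp_1^w$ and $p_1^wp_3^v$ both have $\lambda=0$. These are idempotent on their images, so they give no contradiction.
\item For $(a,b)=(-1,-1)$, all four products on the index pair $\{1,3\}$ have $\lambda=-1$, which is harmless.
\item Your claim that $(1,-1)$ ``satisfies all two-fold conditions'' is false once you apply your own Jordan-block criterion from Step 1.
\item The three-fold fallback is left completely unspecified.
\end{itemize}

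So as written, nothing rules out the five bad patterns.

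The missing computation is short. The very product you used in Step 1 to get $a\in\{0,\pm1\}$ already does the job. The restriction of $p_1^wp_2^v$ to its image $\{x_1=0\}$, in the basis $e_2,e_3$, is
\[
\begin{pmatrix}a&0\\ b-1&1\end{pmatrix}.
\]
For $a=1$ and $b\neq1$ this is a nontrivial unipotent, so it has infinite order. This eliminates $(1,0)$ and $(1,-1)$ at once; the paper just writes out $p^n$ explicitly in these two cases. Your $2\leftrightarrow3$ swap then disposes of $(0,1)$ and $(-1,1)$.

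For $(-1,-1)$ there are two options:
\begin{itemize}
\item Use a coordinate permutation that moves the first coordinate, as the paper does. Swapping coordinates $1$ and $3$ fixes $\spa111$ and the set of coordinate planes, and sends $\spa1{-1}{-1}$ to $\spa11{-1}$. This is the case $(1,-1)$ already excluded.
\item Argue directly with $p_2^vp_3^w$. Its restriction to $\{x_2=0\}$ has eigenvalue $\lambda=a/b=1$ and off-diagonal entry $(a-1)/b=2\neq0$, again a Jordan block of infinite order.
\end{itemize}

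With these computations filled in, the argument closes using only two-fold products.
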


\begin{pf}
By assumption $M$ contains the projections
\[
\begin{pmatrix}0&0&0\\-a&1&0\\-b&0&1\end{pmatrix},\ \begin{pmatrix}1&-1&0\\0&0&0\\0&-1&1\end{pmatrix},
\]
and therefore contains the matrix
\[
p=\begin{pmatrix}0&0&0\\-a&1&0\\-b&0&1\end{pmatrix}\begin{pmatrix}1&-1&0\\0&0&0\\0&-1&1\end{pmatrix}=\begin{pmatrix}0&0&0\\-a&a&0\\-b&b-1&1\end{pmatrix}.
\]
In order for $p$ to have only finitely many different powers, we need $a\in\{0,1,-1\}$. Symmetrically, $b\in\{0,1,-1\}$.

Now assume that neither $a$ nor $b$ equals $0$. Then (by permuting the coordinate axes if necessary) we can assume that $a=1$ and $b=-1$. Now the matrix $p$ above becomes
\[
\begin{pmatrix}0&0&0\\-1&1&0\\1&-2&1\end{pmatrix},
\]
and a simple induction shows that
\[
p^n=\begin{pmatrix}0&0&0\\-1&1&0\\2n-1&-2n&1\end{pmatrix}
\]
for any $n\in\bbn$, so that $M$ is infinite, a contradiction.

So at least one of $a$ and $b$ is zero. Now (up to symmetry) it only remains to rule out the case where $a=1$ and $b=0$. In this case
\[
p=\begin{pmatrix}0&0&0\\-1&1&0\\0&-1&1\end{pmatrix},
\]
and therefore
\[
p^n=\begin{pmatrix}0&0&0\\-1&1&0\\n-1&-n&1\end{pmatrix}
\]
for any $n$, and again $M$ is infinite.
\end{pf}

%

\begin{lemma}\label{next3im}
Suppose the three coordinate planes are images of $M$. Then $\spa111$, $\spa1{-1}0$ and $\spa10{-1}$ cannot all be kernels of $M$.
\end{lemma}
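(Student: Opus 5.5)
The plan is to argue by contradiction, as in \cref{r32}: exhibit a product of projections in $M$ that has infinitely many distinct powers. Suppose all three lines are kernels. Since $M$ is \cmp and the three coordinate planes are images, $M$ contains $\pwi KA$ for every such kernel $K$ and every coordinate plane $A$ with $K\nls A$. Writing $A_i=\{x_i=0\}$ and $K=\lan v\ran$, this projection is $I-v e_i^{\mathsf T}/v_i$. This gives the following available projections:
\begin{itemize}
\item[] $\spa111$ with each of $A_1,A_2,A_3$;
\item[] $\spa1{-1}0$ with $A_1,A_2$;
\item[] $\spa10{-1}$ with $A_1,A_3$.
\end{itemize}
So there are seven explicit $3\times3$ matrices. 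Note that \cref{r32} alone cannot finish the job. With $\spa111$ as base kernel, the pairs $(a,b)=(-1,0)$ and $(0,-1)$ are each individually allowed. So we must use the two extra kernels together.

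The main tool is a rank-$2$ test. Take a product $x=q_1\cdots q_r$ of these projections in which $\ker(q_a)\nls\im(q_{a+1})$ for each $a$. Then $x$ has kernel $\ker(q_r)$ and image $A=\im(q_1)$. Suppose also that $\ker(q_r)\nls A$. Then $x$ restricts to an invertible endomorphism of the plane $A$, so finiteness of $M$ forces this $2\times2$ real matrix to have finite order. A real $2\times2$ matrix of finite order is diagonalisable over $\bbc$ with eigenvalues roots of unity. Consequently its trace $\tau$ and determinant $\delta$ satisfy $\delta=\pm1$; if $\delta=1$, then $\tau\in\{0,\pm1,\pm2\}$, with $\tau=\pm2$ only when the matrix is $\pm I$. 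In particular, a non-identity unipotent restriction, or one with $|\tau|>2$, gives the contradiction, just as the unipotent powers $p^n$ did in \cref{r32}.

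The work is then to find a suitable word. The computation I would start from is the following. Let $p$ have kernel $\spa1{-1}0$ and image $A_2$, let $r$ have kernel $\spa10{-1}$ and image $A_1$, and let $s$ have kernel $\spa111$ and image $A_3$. Then $prs$ restricted to $A_2$ is $\fmx0{-1}1{-1}$ in coordinates $(x,z)$, which has order $3$, so this word is not yet enough. I would therefore vary the word systematically. One option is to replace one factor by the other available projection with the same kernel, say the one with image $A_1$ or $A_2$ for $\spa111$. Another is to lengthen the word, for instance $prs\,t$ or a product of two such order-$3$ elements having the same image plane but different actions. The aim is to hit a restriction with trace $\pm2$ that is not $\pm I$, or with $|\tau|>2$. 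Because the conditions look symmetric, I would exploit the symmetry swapping coordinates $2$ and $3$ to cut down the cases.

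The main obstacle is this search for the right word. Short products may all have finite order, as the $prs$ example shows, so a product of length four or more may be needed. If a direct search stalls, my fallback is to conjugate $M$ by a change of basis adapted to one image plane. The idea is to reduce to a $2$-dimensional situation in which \cref{int0} or \cref{int2} applies, and to read off a forbidden configuration of kernels and images there.
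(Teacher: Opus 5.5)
There is a genuine gap. The proposal never reaches a contradiction, and the rank-two test you rely on cannot reach one from these seven projections, however long the word.

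Here is why. A nullity-one chain $q_1\cdots q_r$ restricts on $\im(q_1)$ to a composite of maps $\pwi KA|_B\colon B\to A$ between coordinate planes, with $K\not\subseteq A\cup B$. Use the coordinates $(x_2,x_3)$, $(x_1,x_3)$, $(x_1,x_2)$ on $A_1,A_2,A_3$.
\begin{itemize}
\item The maps along $\spa1{-1}0$ (between $A_1$ and $A_2$) are the identity in both directions.
\item The maps along $\spa10{-1}$ (between $A_1$ and $A_3$) are the coordinate swap in both directions.
\item Using these two families to identify all three planes with $A_1$, the six maps along $\spa111$ become $\fmx{-1}0{-1}1$ (between $A_1$ and $A_2$), $\fmx1{-1}0{-1}$ (between $A_1$ and $A_3$), and $\fmx0{-1}1{-1}^{\pm1}$ (between $A_2$ and $A_3$).
\end{itemize}
The first two matrices are involutions whose product is $\fmx0{-1}1{-1}^{-1}$, of order $3$. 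So they generate a group of order $6$ containing all six maps. Every restriction your test can produce is therefore conjugate into this group and has order $1$, $2$ or $3$. Your order-$3$ example $prs$ is typical, not an accident, and the systematic search would stall. The fallback to \cref{int0} or \cref{int2} names no forbidden configuration. Any reduction to the action on image planes only sees this finite group.

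The missing idea is to allow products that collapse to rank one. If $x\in M$ has rank one, then $x^2=\tr{x}\,x$. Finiteness of $M$ then forces $\tr{x}$ to be $0$ or a root of unity, that is, $\tr{x}\in\{0,\pm1\}$ over $\bbr$. This is exactly what the paper does, using your own three projections in the order $rps$ instead of $prs$. The inclusions $\ker(r)=\spa10{-1}\subseteq A_2=\im(p)$ and $\ker(p)=\spa1{-1}0\subseteq A_3=\im(s)$ are precisely what your chain condition excludes. They make the product collapse:
\[
rps=\begin{pmatrix}0&0&0\\0&0&0\\1&1&-2\end{pmatrix}.
\]
This has trace $-2$, so the powers $(rps)^n=(-2)^{n-1}rps$ are all distinct and $M$ is infinite.
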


\begin{pf}
Suppose $M$ does contain all three of the given kernels. Then $M$ contains the projections
\[
\begin{pmatrix}0& 0& 0\\0& 1& 0\\1& 0& 1\end{pmatrix},\ \begin{pmatrix}1& 1& 0\\0& 0& 0\\0& 0& 1\end{pmatrix},\ \begin{pmatrix}1& 0& -1\\0& 1& -1\\0& 0& 0\end{pmatrix},
\]
and therefore contains
\[
\begin{pmatrix}0& 0& 0\\0& 1& 0\\1& 0& 1\end{pmatrix}\begin{pmatrix}1& 1& 0\\0& 0& 0\\0& 0& 1\end{pmatrix}\begin{pmatrix}1& 0& -1\\0& 1& -1\\0& 0& 0\end{pmatrix}=\begin{pmatrix}0&0&0\\0&0&0\\1&1&-2\end{pmatrix}.
\]
Obviously all powers of this matrix are distinct, so that $M$ is infinite.
\end{pf}

\begin{lemma}\label{nextnext3im}
Suppose the three coordinate planes are images of $M$. Then $\spa111$, $\spa1{-1}0$ and $\spa100$ cannot all be kernels of $M$.
\end{lemma}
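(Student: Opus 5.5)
The plan is to argue by contradiction, as in \cref{r32} and \cref{next3im}. Suppose the three coordinate planes are images of $M$ and that $\spa111$, $\spa1{-1}0$, $\spa100$ are all kernels. Then completeness (which we are assuming throughout this section) supplies a definite list of projections in $M$. The goal is to find a short product of them with infinitely many distinct powers, contradicting finiteness.

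\textbf{Step 1: list the projections.} The projection with kernel $\lan v\ran$ and image $\{x_i=0\}$ is $u\mapsto u-(u_i/v_i)v$, and it exists exactly when $v_i\neq0$. This gives:
\begin{itemize}
\item three projections with kernel $\spa111$ (images $x=0$, $y=0$, $z=0$);
\item two with kernel $\spa1{-1}0$ (images $x=0$, $y=0$);
\item only $\pwi{\spa100}{\{x=0\}}=\mathrm{diag}(0,1,1)$ with kernel $\spa100$.
\end{itemize}
Since the new kernel $\spa100$ is used only through this last projection, any useful word has to contain $\mathrm{diag}(0,1,1)$.

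\textbf{Step 2: reduce to $2\times2$ matrices.} To organise the search, I would study words that begin with a projection whose image is the plane $\{x=0\}$. Then a word $w$ acts on $\{x=0\}$ as a $2\times2$ matrix $\bar w$, and $M$ finite forces every such $\bar w$ to have finite order on its image (in particular, no nontrivial unipotent part and no eigenvalue other than $0$ or $\pm1$, since we are over $\bbr$). Some restrictions are quickly computed:
\begin{itemize}
\item composing the $\spa111$- and $\spa1{-1}0$-projections through the other planes gives maps $(y,z)\mapsto(-y,z-y)$ and $(y,z)\mapsto(y-2z,0)$;
\item inserting $\mathrm{diag}(0,1,1)$ kills the $x$-coordinate produced by the $y=0$ and $z=0$ projections, and so gives further maps such as $(y,z)\mapsto(0,-z)$.
\end{itemize}
I would then look for a product of such $2\times2$ pieces that is a nontrivial unipotent matrix, or a rank-one matrix whose trace is not $0$ or $\pm1$. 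Either would give unbounded powers, exactly as the matrices $p^n$ in \cref{r32} did.

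\textbf{Step 3: the main obstacle.} The hard part is finding the right word. Several of the two- and three-letter products I expect to try first, for example
\[
\pwi{\spa111}{\{z=0\}}\,\pwi{\spa1{-1}0}{\{y=0\}}\,\mathrm{diag}(0,1,1),
\]
come out nilpotent or idempotent, so length three with only one occurrence of $\mathrm{diag}(0,1,1)$ may not be enough. I would next try words of length four or five that use $\mathrm{diag}(0,1,1)$ together with both the $y=0$ and $z=0$ projections with kernel $\spa111$. I would choose them so that the restriction to a fixed image plane comes out as a nontrivial unipotent matrix like $\fmx1{c}01$, in the same spirit as the matrix $\begin{pmatrix}0&0&0\\0&0&0\\1&1&-2\end{pmatrix}$ in \cref{next3im}.

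\textbf{Fallback.} If no short word works, I would instead exploit \cref{r32}. The dual projections and any symmetry coming from permuting the $y$ and $z$ coordinates may produce a new kernel of the form $\spa1ab$. If such a kernel can be derived, \cref{r32} or \cref{next3im} would rule it out and give the contradiction.
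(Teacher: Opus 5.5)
\textbf{There is a genuine gap: the proposal never produces the contradiction.} Your strategy matches the paper's, and your list of projections is correct: use completeness to get the six projections, then exhibit a product with infinitely many distinct powers. But Step~3 only describes a search, and the fallback is speculative, so what you have is a plan rather than a proof.

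The missing ingredient is a concrete word. The paper uses the length-four product
\[
\mathrm{diag}(0,1,1)\,\pwi{\spa111}{\{y=0\}}\,\pwi{\spa1{-1}0}{\{x=0\}}\,\pwi{\spa111}{\{z=0\}}
=\begin{pmatrix}0&0&0\\0&0&0\\-1&-1&2\end{pmatrix}.
\]
This matrix has rank one and trace $2$, so its $n$th power is $2^{n-1}$ times itself, and $M$ is infinite. Your heuristic (length four, using $\mathrm{diag}(0,1,1)$ together with the $\spa111$-projections onto $\{y=0\}$ and $\{z=0\}$) describes exactly this word, so you were close. You still need to write it down and multiply it out.

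Note that the witness is not unipotent. On the plane $\{x=0\}$ it acts as $(y,z)\mapsto(0,2z-y)$, i.e.\ by $\fmx00{-1}2$. So the right target is your other option, ``rank one with trace outside $\{0,\pm1\}$'', not something like $\fmx1c01$. The matrix in \cref{next3im} that you cite as analogous is likewise rank one with trace $-2$, not unipotent.

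\textbf{The fallback cannot replace this step.} The kernels of $M$ are fixed data. Any product of these projections that has nullity one has the kernel of its rightmost factor, so no new kernel $\spa1ab$ can be derived. Swapping the $y$ and $z$ coordinates only replaces $M$ by an equivalent monoid with kernels $\spa111$, $\spa10{-1}$, $\spa100$.

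More fundamentally, the configuration in the statement is consistent with both earlier lemmas:
\begin{itemize}
\item \cref{r32} permits both $(a,b)=(-1,0)$ and $(a,b)=(0,0)$;
\item \cref{next3im} needs $\spa1{-1}0$ and $\spa10{-1}$ to be kernels simultaneously.
\end{itemize}
That is why the paper proves this lemma separately and uses it alongside \cref{next3im} in \cref{notirred}; an explicit element with infinitely many powers is unavoidable.

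A minor point: in Step~2, finiteness over $\bbr$ rules out real eigenvalues other than $0,\pm1$, but not non-real roots of unity. So ``no eigenvalue other than $0$ or $\pm1$'' is overstated, although the direction you actually need is fine.
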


\begin{pf}
Suppose $M$ does contain all three of the given kernels.  Then $M$ contains the projections
\[
\begin{pmatrix}0&0&0\\0&1&0\\0&0&1\end{pmatrix},\ \begin{pmatrix}1&-1&0\\0&0&0\\0&-1&1\end{pmatrix},\ \begin{pmatrix}0&0&0\\1&1&0\\0&0&1\end{pmatrix},\ \begin{pmatrix}1&0&-1\\0&1&-1\\0&0&0\end{pmatrix},
\]
and therefore contains
\[
\begin{pmatrix}0&0&0\\0&1&0\\0&0&1\end{pmatrix}\begin{pmatrix}1&-1&0\\0&0&0\\0&-1&1\end{pmatrix}\begin{pmatrix}0&0&0\\1&1&0\\0&0&1\end{pmatrix}\begin{pmatrix}1&0&-1\\0&1&-1\\0&0&0\end{pmatrix}=\begin{pmatrix}0&0&0\\0&0&0\\-1&-1&2\end{pmatrix}.
\]
Obviously all powers of this matrix are distinct, so that $M$ is infinite.
\end{pf}

\begin{lemma}\label{notirred}
Suppose $A,B,C$ are images of $M$ with $A\cap B\cap C=0$, and $K$ is a kernel of $M$ with $K\not\subseteq A\cup B\cup C$. Then
\[
\kerm M\subseteq\{K,A\cap B,A\cap C,B\cap C\}.
\]
\end{lemma}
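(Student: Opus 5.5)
We will put $A,B,C$ in coordinates, use \cref{r32} (and its coordinate permutations) to cut the kernels down to a short explicit list, and then rule out the unwanted members of that list by a product argument. The last step is the one we have not yet carried out.

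Since $A\cap B\cap C=0$, we may replace $M$ by an equivalent monoid in which $A,B,C$ are the three coordinate planes. Then $A\cap B$, $A\cap C$, $B\cap C$ are the three coordinate axes $\spa100,\spa010,\spa001$. The hypothesis $K\not\subseteq A\cup B\cup C$ says that a spanning vector of $K$ has all three coordinates non-zero. Rescaling the coordinates, we may assume $K=\spa111$. We must show that every kernel other than $K$ is a coordinate axis.

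Take a kernel $K'\neq K$. Some coordinate of its spanning vector is non-zero. Permuting coordinates (which preserves $K$ and the set of coordinate planes), we may take it to be the first and write $K'=\spa1ab$ with $(a,b)\neq(1,1)$. By \cref{r32}, one of $a,b$ is $0$ and the other is $0$ or $-1$. Undoing the permutation, every kernel other than $K$ lies in
\[
\{\spa100,\spa010,\spa001,\spa1{-1}0,\spa10{-1},\spa01{-1}\}.
\]
So it remains to exclude the three lines $\lan e_i-e_j\ran$. By the symmetry of the situation under permuting coordinates, it suffices to show that $\spa1{-1}0$ is not a kernel.

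This exclusion is the main obstacle. \cref{next3im} and \cref{nextnext3im} exclude only certain triples of kernels; here we need to exclude $\spa1{-1}0$ using only $K$ and the three coordinate planes as images. Using completeness (\cref{irrconds}), $M$ contains all six projections $\pwi{K}{A}$, $\pwi KB$, $\pwi KC$, $\pwi{\spa1{-1}0}{A}$, $\pwi{\spa1{-1}0}{C}$, $\pwi{\spa1{-1}0}{B\text{ or }\dots}$ (those with $K'\not\subseteq$ image). We will then search among short products of these, in the spirit of \cref{r32} and \cref{next3im}, for an element of $M$ of infinite order.

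There are two ways such an element can arise. A rank-one product with trace of absolute value different from $0$ and $1$ contradicts the trace condition. Alternatively, a product acting on a plane image with a non-trivial unipotent Jordan block has infinitely many distinct powers, as with the matrix $p^n$ in \cref{r32}.

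A direct check shows that length-two products of these projections act diagonalisably with eigenvalues $\pm1$. So we expect to need products of length three or four, cycling through all three images $A,B,C$, as in \cref{nextnext3im}. That is where we will look first.

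If no product gives a contradiction, the fallback is to use the other hypotheses in force in this section: the kernels of $M$ span $\bbr^3$ and the images of $M$ intersect in $0$. The aim would be to force a third kernel and so land in the configuration of \cref{next3im} or \cref{nextnext3im}.
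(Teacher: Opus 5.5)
There is a genuine gap, and it cannot be closed. The step you flag as the main obstacle, showing that $\spa{1}{-1}{0}$ is not a kernel, is false, and so is the lemma as stated.

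Let $M$ be the complete projection monoid on $\bbr^3$ whose images are the three coordinate planes and whose kernels are $\spa{1}{1}{1}$, $\spa{1}{-1}{0}$, $\spa{0}{0}{1}$. That is, $M$ is generated by the six projections $\pwi{J}{L}$ with $J$ one of these lines, $L$ a coordinate plane and $J\not\subseteq L$.

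Pass to the dual and use the coordinates $\psi\mapsto(\psi_1+\psi_2+\psi_3,\ \psi_1-\psi_2,\ \psi_3)$ on $(\bbr^3)^\ast$. In these coordinates:
\begin{itemize}
\item the images of $M^\ast$ (the annihilators of the three kernels) become the coordinate planes;
\item its kernels (the annihilators of the coordinate planes) become $\langle e_1+e_2\rangle$, $\langle e_1-e_2\rangle$, $\langle e_1+e_3\rangle$;
\item the six dual generators become $p_{12}^{\pm1}$, $p_{21}^{\pm1}$, $p_{13}^{-1}$, $p_{31}^{-1}$.
\end{itemize}
So $M^\ast$ is equivalent to a submonoid of $\mnb{3}^2$, and $M$ is finite by \cref{ordermnb}. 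It is complete by construction and its images meet in $0$. Its kernels span $\bbr^3$, since $(1,1,1)-(0,0,1)$ and $(1,-1,0)$ already span the plane $x_3=0$. By \cref{invlem} it is even irreducible.

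Take $A,B,C$ to be the coordinate planes and $K=\spa{1}{1}{1}$. Every hypothesis holds, yet $\spa{1}{-1}{0}\notin\{K,A\cap B,A\cap C,B\cap C\}$.

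So your search for an element of infinite order must fail; your length-two observation is the whole story. For instance, $\pwi{K}{\{x_1=0\}}\,\pwi{\spa{1}{-1}{0}}{\{x_2=0\}}$ acts on $\{x_1=0\}$ by the involution $(0,y,z)\mapsto(0,-y,z-y)$. Your fallback fails too. The spanning condition forces a third kernel off the plane spanned by $K$ and $\spa{1}{-1}{0}$. \cref{next3im,nextnext3im}, together with the swap of the first two coordinates, exclude $\spa{1}{0}{-1}$, $\spa{0}{1}{-1}$, $\spa{1}{0}{0}$, $\spa{0}{1}{0}$, but not $\spa{0}{0}{1}$.

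The paper's own proof breaks at exactly this point. It reaches the list $\spa{1}{1}{1},\spa{1}{-1}{0},\spa{0}{0}{1}$ and asserts that these do not span $\bbr^3$, which is false (the determinant is $-2$).

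What your reduction via \cref{r32}, together with \cref{next3im,nextnext3im}, actually proves is a dichotomy: either $\kerm M\subseteq\{K,A\cap B,A\cap C,B\cap C\}$, or, after permuting coordinates, $\kerm M=\{K,\spa{1}{-1}{0},\spa{0}{0}{1}\}$. This weaker statement still suffices for \cref{r3classn}:
\begin{itemize}
\item it still gives $\card{\kerm M}\le 4$;
\item the argument there that $M^\ast$ satisfies \starr when $A,B,C$ are the only images also works in the exceptional case, since each coordinate plane contains $\spa{0}{0}{1}$ or $\spa{1}{-1}{0}$;
\item once $\card{\kerm M}=4$ has been established, the exceptional case is ruled out in the two later uses of the lemma.
\end{itemize}
But the lemma in the form you set out to prove cannot be established.
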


\begin{pf}
By replacing $M$ with an equivalent \prm, we can assume that $A,B,C$ are the three coordinate planes, and $K=\spa111$. By \cref{r32} (and by permuting coordinates) the only possible kernels other than $\spa111$ and the three coordinate axes are $\spa1{-1}0$, $\spa10{-1}$ and $\spa01{-1}$. Suppose one of these is a kernel, say $\spa1{-1}0$. Then by \cref{next3im,nextnext3im} neither $\spa10{-1}$ nor $\spa100$ is a kernel, and by permuting coordinates neither is $\spa01{-1}$ or $\spa010$. But this means that the only possible kernels are $\spa111$, $\spa1{-1}0$ and $\spa001$, so that the kernels of $M$ do not span $\bbr^3$, contrary to assumption.
\end{pf}

\begin{pf}[Proof of \cref{r3classn}]
Suppose $M$ satisfies \starr. Then by \cref{star3,star4,star5} $M$ is equivalent to a submonoid of $\mna3$ or $\mnb3^2$, or $M$ is split. But if $M$ is split, then $M$ is reducible, contrary to assumption.

So we can assume that $M$ does not satisfy \starr. By applying the above paragraph to the dual \prm $M^\ast$, we can assume that $M^\ast$ does not satisfy \starr either. In this case we will derive a contradiction.

The assumption that $M$ does not satisfy \starr means that we can find $K,A,B,C$ as in \cref{notirred}. From that \lcnamecref{notirred} we deduce that $\card{\kerm M}\ls4$. By duality, we can also deduce that $\card{\imm M}\ls4$. Now suppose $A,B,C,K$ are as in \cref{notirred}. If $A,B,C$ are the only images of $M$, then the dual of $M$ satisfies \starr: given any three kernels of $M$, each of $A,B,C$ contains at least one of them. So our assumption that $M^\ast$ does not satisfy \starr means that $\card{\imm M}=4$. Dually, we deduce that $\card{\kerm M}=4$, so that $\kerm M=\{K,A\cap B,A\cap C,B\cap C\}$. This implies in particular that no three kernels of $M$ are coplanar. Dually, no three images meet in a line, so the remaining image $D\neq A,B,C$ cannot contain any of the lines $A\cap B$, $A\cap C$, $B\cap C$.

Now we claim that $D$ contains $K$. If it does not, then $A\cap B\cap D=0$ and $K\not\subseteq A\cup B\cup D$, but $M$ has kernels other than $K,A\cap B,A\cap D,B\cap D$ (namely $A\cap C$), so $A,B,D$ violate \cref{notirred}. 

So $D$ must contain $K$. Now for definiteness let us assume that $A$, $B$ and $C$ are the three coordinate planes and $K=\spa111$. Then we can write $D$ as the orthogonal complement of the line $\spa a{-1}{1-a}$ for some $a$. The assumption that $D$ contains none of the coordinate axes means that $a\neq0,1$.

Now $M$ contains the projections
\[
\begin{pmatrix}1&0&0\\0&0&0\\0&0&1\end{pmatrix},\ \begin{pmatrix}0&0&0\\-1&1&0\\-1&0&1\end{pmatrix},\ \begin{pmatrix}0&a^{-1}&1-a^{-1}\\0&1&0\\0&0&1\end{pmatrix},
\]
amd so contains
\[
\begin{pmatrix}1&0&0\\0&0&0\\0&0&1\end{pmatrix}\begin{pmatrix}0&0&0\\-1&1&0\\-1&0&1\end{pmatrix}\begin{pmatrix}0&a^{-1}&1-a^{-1}\\0&1&0\\0&0&1\end{pmatrix}=\begin{pmatrix}0&0&0\\0&0&0\\0&-a^{-1}&a^{-1}\end{pmatrix}.
\]
This last matrix must have only finitely many different powers, forcing $a=-1$.

But now $M$ contains the projections
\[
\begin{pmatrix}1&0&0\\0&1&0\\0&0&0\end{pmatrix},\ \begin{pmatrix}1&-1&0\\0&0&0\\0&-1&1\end{pmatrix},\ \begin{pmatrix}1&0&0\\-1&0&2\\0&0&1\end{pmatrix},
\]
and so contains
\[
\begin{pmatrix}1&0&0\\0&1&0\\0&0&0\end{pmatrix}\begin{pmatrix}1&-1&0\\0&0&0\\0&-1&1\end{pmatrix}\begin{pmatrix}1&0&0\\-1&0&2\\0&0&1\end{pmatrix}=\begin{pmatrix}2&0&-2\\0&0&0\\0&0&0\end{pmatrix},
\]
and we see that $M$ is infinite, a contradiction.
\end{pf}


\section{Affine \prms}

\subsection{Definitions}

In this final section we broaden our definitions, and consider monoids generated by \emph{affine projections}: that is, idempotent affine-linear maps whose image is an affine subspace of $V$ of codimension $1$. The \prms considered in preceding sections (whose elements are all linear maps) will be called \emph{linear} \prms in this section. Affine \prms are analogous to affine reflection groups, which play an important role in Lie theory. However, while properly affine reflection groups (i.e.\ those which are not equivalent to linear reflection groups) are necessarily infinite in characteristic $0$, properly affine \prms can be finite. In this section we will mainly restrict attention to definitions and examples, leaving a more comprehensive exploration of affine \prms to a future paper.

We start by setting out some definitions and basic theory. If $p:V\to V$ is an affine projection, then its image $\im(p)$ is an affine subspace of $V$. We abuse notation and define the \emph{kernel} of $p$ to be $\ker(p)=\lset{p(v)-v}{v\in V}$. Thus $\ker(p)$ is a (linear) subspace of $V$, of dimension $1$ (but is not the kernel in the usual sense of being the preimage of the zero vector under~$p$).

If $m:V\to V$ is an affine-linear map, then we can define a linear map $\ud m:V\to V$ by $\ud m(v)=m(v)-m(0)$. Clearly if $p$ is an affine projection then $\ud p$ is a linear projection: the image $\im(\ud p)$ is the linear subspace obtained as a translation of $\im(p)$, and the kernel $\ker(\ud p)$ equals $\ker(p)$.

For any affine-linear maps $l,m$, it is clear that $\ud{lm}=\ud l\ud m$. Thus if $M$ is an affine \prm, then the monoid $\ud M=\lset{\ud m}{m\in M}$ is a linear \prm, naturally isomorphic to a quotient of $M$. We call $\ud M$ the \emph{linear \prm underlying $M$}.

The notion of \cmpness introduced in \cref{irrsec} naturally extends to affine \prms: $M$ is \cmp if whenever $K$ is a kernel of $M$ and $L$ is an image of $M$ such that $K+L=V$ (or equivalently, $K$ is not contained in any translate of $L$) the affine projection $\pwi KL$ lies in $M$. It is easy to see that if $M$ is \cmp, then so is $\ud M$; a partial converse is given by following.

\begin{propn}\label{affcomp}
Suppose $M$ is an affine \prm, and that the underlying linear \prm $\ud M$ is \cmp. Suppose that one of the following holds.
\begin{enumerate}
\item\label{nopara}
$M$ does not have two parallel images.
\item\label{fini}
$\nchar(\bbf)=0$ and $M$ is finite.
\end{enumerate}
Then $M$ is \cmp.
\end{propn}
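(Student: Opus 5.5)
Fix a kernel $K$ of $M$ and an image $L$ of $M$ with $K+L=V$. We want to show that the affine projection $\pwi KL$ lies in $M$. Choose affine projections $t,v\in M$ with $\ker(t)=K$ and $\im(v)=L$. Since $\ud M$ is \cmp, the linear projection $\pwi{K}{\ud L}$ lies in $\ud M$, where $\ud L$ is the linear translate of $L$. So there is an element $m\in M$ whose underlying linear map is this projection. Such an $m$ is an affine idempotent-type map with kernel $K$ and image some translate $L'$ of $L$. The issue is only that $L'$ may differ from $L$ by a translation.

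The plan is to correct the image by composing with $v$. Consider $x=vm$. Its linear part is $\ud v\,\ud m$. Since $\ker(\ud v)$ is a line not contained in $\ud L$ (because $\ker(v)+L=V$), and $\im(\ud m)=\ud L=\im(\ud v)$, we get $\ud v\,\ud m=\ud m$, which is the projection $\pwi K{\ud L}$. So $x$ is an affine map with linear part $\pwi K{\ud L}$ and image contained in $\im(v)=L$, hence equal to $L$. Thus $x$ acts on $L$ as a translation-free affine map with identity linear part on $\ud L$, i.e.\ as a translation $\tau$ of $L$ by some vector $c\in\ud L$. In fact, for $y\in L$ we have $x(y)=y+c$. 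Moreover $x$ has kernel $K$ in the paper's sense, so $x=\tau\circ\pwi KL$ with $\tau$ the translation of $L$ by $c$. It then suffices to show $c=0$ after passing to a suitable power or a suitable replacement.

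Under hypothesis~(\ref{fini}), the powers $x^a$ restrict to $L$ as translation by $ac$. Finiteness of $M$ forces these to repeat, and in characteristic $0$ this gives $c=0$. So $x=\pwi KL$ itself. This case is straightforward.

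Under hypothesis~(\ref{nopara}), the main obstacle is that we cannot kill $c$ by taking powers. Instead, use $t$ and the absence of parallel images. Here $m$ itself has image $L'$ parallel to $L$, and $L'$ is an image of $M$ only if $m$ is a projection; rather than rely on that, first replace $m$ by $vm$ as above, so that $x$ has image exactly $L$. Then compute $x^2=x\circ x$: on $L$ it is translation by $2c$. So $x$ is not idempotent unless $c=0$, and $x$ is not itself a projection. To get a genuine projection, take an element $w\in M$ with image $L$ and some kernel $K'$ with $K'\nls\ud L$, for instance $w=v$. Look at $xv$ versus $vx$: $vx=x$ (image in $L$, fixed by $v$), while $x$ restricted to $L$ is translation by $c$. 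Since $v$ fixes $L$ pointwise, the only freedom is $c$. I would try to show $c=0$ by noting that $m\in M$ is a product of affine projections $p_1\cdots p_r$ and $x=vp_1\cdots p_r$. Each $p_i$ has image a hyperplane which, by (\ref{nopara}), is the unique image of $M$ in its parallel class. I would then argue by induction on $r$ that a product of projections whose linear part is a projection onto $\ud L$ with kernel $K$ fixes the unique image in the class of $\ud L$ pointwise. This induction, tracking the translation component through products using that $\im(p_1)$ is determined by its direction, is the step I expect to be hardest. If it resists, the fallback is to express $\pwi KL$ as $v\,\pwi{K}{\ud L}$-type composites where the last factor already has image $L$, exploiting that any element of $M$ with linear image $\ud L$ and of rank $n-1$ must have image equal to the unique image $L$.
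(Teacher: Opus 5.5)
\textbf{Case (2).} Your argument is correct, and a little more economical than the paper's. Replacing a lift $m$ of $\pwi K{\ud L}$ by $x=vm$ forces the image to be exactly $L$, so finiteness and characteristic~$0$ kill the translation at once. The paper instead first produces some $\pwi K{L'}$ with $L'$ parallel to $L$, which tacitly uses the same power argument. It then needs a separate lemma that projections in $M$ with distinct parallel images have equal kernels.

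\textbf{Case (1).} Here there is a genuine gap. You correctly reduce to showing that the translation $c$ of $L$ induced by $x$ is zero; hypothesis~(1) only pins down the image, which you already had. But you never prove $c=0$, and the induction you propose is false. In $\bbr^2$ let $L_1=\{y=0\}$, $L_2=\{x=0\}$, $L_3=\{x+y=1\}$, and let $p_1=\pwi{\langle(4,-1)\rangle}{L_1}$, $p_2=\pwi{\langle(2,-1)\rangle}{L_2}$, $p_3=\pwi{\langle(3,-1)\rangle}{L_3}$. Then $p_1p_2p_3(x,y)=(x+3y+1,0)$. The images are pairwise non-parallel and the linear part is the projection $\pwi{\langle(3,-1)\rangle}{\ud{L_1}}$, yet $L_1$ is translated by $(1,0)$. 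Your fallback only re-derives that the image is $L$. Note that the paper's two-line proof of~(1) passes over exactly this point: it asserts that an element of $M$ lying over $\pwi K{\ud L}$ is already an affine projection.

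\textbf{What can and cannot be repaired.} In characteristic $p>0$ the step can be fixed: $x^p$ translates $L$ by $pc=0$, so $x^p=\pwi KL$. Finiteness fixes it in case~(2). In characteristic~$0$ with $M$ infinite it cannot be fixed, because (1) then fails. Keep $L_i$ and $p_3$ as above, and let $M$ be generated by $p_3$ together with $\pwi{\langle(\kappa,1)\rangle}{L_1}$ and $\pwi{\langle(\kappa,1)\rangle}{L_2}$ for $\kappa\in\{-2,-3/2,-4/3\}$.
\begin{itemize}
\item $\ud M$ is complete. Tracking the scalars around closed chains of lines shows this. For example, $\pwi{\langle(-2,1)\rangle}{L_1}\pwi{\langle(-4,3)\rangle}{L_2}\pwi{\langle(-2,1)\rangle}{L_1}\pwi{\langle(-3,2)\rangle}{L_2}\pwi{\langle(-2,1)\rangle}{L_1}\,p_3$ sends $(s,0)\mapsto(s+1,0)$, so it lies over $\pwi{\langle(3,-1)\rangle}{\ud{L_1}}$.
\item $M$ is not complete. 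Each generator maps the rays $\{(s,0):s\ge0\}$, $\{(0,s):s\ge0\}$ and $\{(2-t,t-1):t\ge0\}$ lying in the other two lines into the ray in its own image, preserving direction. Also, $p_3$ is the only generator with kernel $\langle(3,-1)\rangle$, and it sends the origin to the non-apex point $(\frac32,-\frac12)$ of the third ray. Hence any product with kernel $\langle(3,-1)\rangle$ and image $L_1$ moves the origin to some $(s,0)$ with $s>0$, and so $\pwi{\langle(3,-1)\rangle}{L_1}\notin M$.
\end{itemize}
This happens even though $M$ has no two parallel images and $\ud M$ is complete. So no argument along the lines you sketch can succeed without using finiteness or positive characteristic.
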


\begin{pf}
First suppose (\ref{nopara}) holds. Suppose $K$ is a kernel of $M$ and $L$ an image of $M$ with $K+L=V$, and let $\ud L$ be the linear subspace obtained by translating $L$. Then $\pwi K{\ud L}\in \ud M$, because $\ud M$ is \cmp, which means that $M$ must contain an affine projection with kernel $K$ and image parallel to $L$. But (\ref{nopara}) tells us that the only image parallel to $L$ is $L$ itself, so $\pwi KL$ lies in $M$.

Now suppose (\ref{fini}) holds. First we note that if $p,q$ are projections in $M$ with distinct but parallel images, then $\ker(p)=\ker(q)$. To see this, suppose that the kernels are distinct, and consider the effect of $pq$ on $\im(p)$. Because $\im(p)$ and $\im(q)$ are parallel and distinct, there is a non-zero vector $w\in\ker(q)$ such that $q(v)=v+w$ for all $v\in\im(p)$. Similarly, there is a non-zero vector $x\in\ker(p)$ such that $p(v)=v+x$ for all $x\in\im(q)$. So $pq$ acts on $\im(p)$ as translation by $w+x$. The assumption that $\ker(p)$ and $\ker(q)$ are not parallel means that $w$ and $x$ are linearly independent, so that $w+x\neq0$. So $pq$ acts on $\im(p)$ by a non-zero translation, so has infinite order, contradicting (2).

Now suppose that $K$ is a kernel of $M$ and $L$ an image of $M$ with $K+L=V$. Then, as in the first part of the proof, $M$ must contain $\pwi K{L'}$ for some $L'$ parallel to $L$. If $L'=L$, then we have the desired projection $\pwi KL$ in $M$. So suppose $L'\neq L$. Let $p$ be a projection in $M$ with image $L$. Then from the paragraph above, $\ker(p)=K$; so $p=\pwi KL\in M$.\qedhere
\end{pf}

We extend the notion of irreducibility to affine \prms: $M$ is irreducible if there is no proper affine subspace of dimension greater than $0$ which is mapped to itself by every element of $M$. (Note that if there is an invariant affine subspace of dimension $0$, i.e.\ a point fixed by every element of $M$, then $M$ is equivalent to a linear \prm.)

It is clear that if the underlying linear \prm $\ud M$ is irreducible, then so is $M$. However, the converse need not hold. For example, take $V$ of dimension $2$, and let $K$ and $L$ be any two (linear) lines in $V$. Choose affine lines $L_1,\dots,L_r$ (with $r\gs2$) parallel to $L$, and let $J_1,\dots,J_s$ be any affine lines not parallel to $L$. Let $M$ be the affine \prm generated by
\begin{itemize}
\item
the projections $\pwi K{L_i}$ for $i=1,\dots,r$, and
\item
the projections $\pwi L{J_i}$ for $i=1,\dots,s$.
\end{itemize}
Then it is easy to check that $M$ is finite, and it is irreducible (unless $s=1$ and $J_1$ is parallel to $K$). But the underlying \prm $\ud M$ is reducible, because it has $L$ as an invariant subspace. Note also that (provided at least one of $J_1,\dots,J_s$ is not parallel to $K$) $M$ is not \cmp, so we do not have an analogue of \cref{irrconds} for affine \prms.

\subsection{Examples}

We finish by giving two examples of affine \prms which derive from \cref{typeasec,typebsec}.

For our first example, recall the construction in \cref{typeasec}: we take $V$ to be a vector space of dimension $n$, and that we embed $V$ as a vector space $V^+$ of dimension $n+1$, with basis $\{e_0,\dots,e_n\}$ chosen so that $V=\lset{\sum_i\la_ke_k}{\sum_k\la_k=0}$. For $i\neq j$ we defined the projection $p^+_{ij}:V^+\to V^+$ to have image $\lspan{e_k}{k\neq i}$ and kernel $\lan e_i-e_j\ran$, and we defined $\mnap n$ to be the \prm on $V^+$ generated by the projections $p_{ij}^+$. Then $\mna n$ is the \prm on $V$ generated by the restrictions to $V$ of the projections $p^+_{ij}$.

We saw in \cref{typeasec} that the semigroup of non-invertible elements of $\mnap n$ is isomorphic to the semigroup of non-invertible functions from $\{0,\dots,n\}$ to itself; this isomorphism can be realised directly by observing that the elements of $\mnap n$ map the basis $B=\{e_0,\dots,e_n\}$ to itself, with each non-bijective map from $B$ to $B$ corresponding to a unique element of $\mnap n$.

Now define $U$ to be the affine subspace
\[
\lset{\sum_k\la_ke_k}{\sum_k\la_k=1}
\]
of $V^+$. For $i\neq j$, define $q_{ij}:U\to U$ to be the restriction of $p^+_{ij}$ to $U$. Then $q_{ij}$ is an affine projection: it has kernel $\lan e_i-e_j\ran$ and image $\lset{\sum_k\la_ke_k}{\la_i=0,\ \sum_k\la_k=1}$. Define $\mnc n$ to be the affine \prm on $U$ generated by the projections $q_{ij}$. Then $\mnc n$ is naturally isomorphic to a quotient of $\mnap n$. In fact $\mnc n$ is isomorphic to $\mnap n$, because $U$ contains the basis $B$, so each non-bijective map $B\to B$ still corresponds to a unique element of $\mnc n$.

\medskip
Now we give an example based on the dual of the monoid $\mnb n^t$ constructed in \cref{typebsec}. As in that section, suppose $Z$ is a finite subgroup of $\bbf^\times$, let $t=\card Z$, and take $V$ to be $n$-dimensional over $\bbf$ with basis $\{e_1,\dots,e_n\}$. Now let $X$ be a subset of $\bbf$. Consider the following two kinds of projections:
\begin{itemize}
\item
for each $i\neq j$ and each $z\in Z$, take the projection with kernel $\lan e_i\ran$ and image $\lset{\sum_k\la_ke_k}{\la_i=z\la_j}$.
\item
for each $i$ and each $x\in X$, take the projection with kernel $\lan e_i\ran$ and image $\lset{\sum_k\la_ke_k}{\la_i=x}$.
\end{itemize}

Let $\mnd n^t(X)$ be the affine \prm generated by these projections. Then it is easy to see that the linear \prm underlying $\mnd n^t(X)$ is the dual of $\mnb n^t$.

\begin{propn}\label{affbfin}
If $X$ is finite, then $\mnb n^t(X)$ is finite.
\end{propn}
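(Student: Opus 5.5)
The statement refers to $\mnb n^t(X)$; I read this as the affine monoid $\mnd n^t(X)$ just defined. The plan is the same as in the proof of \cref{ordermnb}: find a finite set of points of $V$ that spans $V$ affinely and is mapped to itself by every generating projection. An affine-linear map is determined by its values on an affinely spanning set. So every element of the monoid is determined by a function from this finite set to itself, and the monoid is finite.

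First I write the generators explicitly in coordinates $v=\sum_k\la_ke_k$. The projection with kernel $\lan e_i\ran$ and image $\lset{v}{\la_i=z\la_j}$ changes only the $i$th coordinate, and replaces $\la_i$ by $z\la_j$. (It is idempotent, since applying it again leaves $\la_j$ unchanged.) The projection with kernel $\lan e_i\ran$ and image $\lset{v}{\la_i=x}$ replaces $\la_i$ by the constant $x$. So every generator acts coordinatewise: it sets one coordinate equal to either a constant from $X$ or a $Z$-multiple of another coordinate.

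Let $Y=ZX\cup\{0\}=\lset{zx}{z\in Z,\ x\in X}\cup\{0\}$, which is finite because $X$ and $Z$ are. This set is closed under multiplication by $Z$, since $Z$ is a group. Let $S=\lset{v}{\la_k\in Y\cup\{1\}Z\text{ for all }k}$. To make this closed I will take $Y'=Z\cup ZX\cup\{0\}$, which is also finite and closed under multiplication by $Z$, and set $S=(Y')^n$. Each generator maps $S$ into $S$: the new coordinate is either $z\la_j\in ZY'=Y'$ or $x\in X\subseteq Y'$ (using $1\in Z$), and all other coordinates are unchanged. Hence every element of $\mnd n^t(X)$ maps $S$ into $S$. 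The set $S$ contains $0$ and each $e_k$, because $0,1\in Y'$. These points affinely span $V$, so each element of the monoid is determined by its restriction to $S$. Therefore $\card{\mnd n^t(X)}\ls\card S^{\card S}$, which is finite.

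There is no real obstacle here. The only point needing care is making sure the constant values $x$ and the scalings by $Z$ together stay inside a finite set. This is handled by choosing $Y'$ closed under multiplication by $Z$ and containing $X$, $0$ and $1$.
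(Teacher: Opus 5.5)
Your proof is correct, and you read the statement rightly as being about $\mnd n^t(X)$. It does take a different route from the paper's.

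You prove finiteness through a finite set of points. Every generator maps $S=(Z\cup ZX\cup\{0\})^n$ into itself, and $S$ affinely spans $V$, so the monoid embeds in the transformation monoid of $S$. This is the strategy the paper itself uses for Proposition~\ref{ordermnb} in the linear case. (The abandoned first definition of $S$ in your write-up should simply be deleted.)

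The paper works with the maps themselves. It writes an affine map as $v\mapsto Av+b$ and defines a finite set $N^t(X)$ of pairs $(A,b)$ by three conditions:
\begin{itemize}
\item each row of $A$ has at most one non-zero entry, and that entry lies in $Z$;
\item $b_k=0$ when row $k$ of $A$ is non-zero;
\item $b_k\in ZX\cup\{0\}$ otherwise.
\end{itemize}
It then checks that the generators lie in $N^t(X)$ and that $N^t(X)$ is closed under composition $(A,b)(C,d)=(AC,Ad+b)$.

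Your version avoids that closure check, since invariance of $S$ is immediate from your coordinate description of the generators. The price is a crude bound: at best $|S|^{n+1}$, using that an element is determined by its values at $0,e_1,\dots,e_n$. The paper's version costs a short case analysis. In return it describes the shape of every element and gives the much sharper bound $|N^t(X)|=(nt+|ZX\cup\{0\}|)^n$, the natural affine analogue of the count in Proposition~\ref{ordermnb}.
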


\begin{pf}
We write the elements of $V$ as column vectors with respect to the basis $\{e_1,\dots,e_n\}$. Then an affine-linear map on $V$ can be written in the form $v\mapsto Av+b$, where $A$ is an $n\times n$ matrix and $b\in v$. Now let $N^t(X)$ denote the set of pairs $(A,b)$ for which:
\begin{itemize}
\item
each row of $A$ has at most one non-zero entry, and all non-zero entries of $A$ lie in $Z$;
\item
for each $k$, if row $k$ of $A$ contains no non-zero entries, then $b_k\in\lset{zx}{z\in Z,\ x\in X}\cup\{0\}$, while if row $k$ of $A$ contains a non-zero entry, then $b_k=0$.
\end{itemize}
We claim that for every $m\in M$ the corresponding pair $(A,b)$ lies in $N^t(X)$. Since $N^t(X)$ is obviously finite, this will prove the result.

First observe that for each of the generating projections $p\in M$, the corresponding pair $(A,b)$ lies in $N^t(X)$:
\begin{itemize}
\item
if $p$ has kernel $\lan e_i\ran$ and image $\lset{\sum_k\la_ke_k}{\la_i=z\la_j}$, then $b=0$ and $A$ agrees with the identity matrix except that its $(i,i)$-entry is $0$ and its $(i,j)$-entry is $z$;
\item
if $p$ has kernel $\lan e_i\ran$ and image $\lset{\sum_k\la_ke_k}{\la_i=x}$, then $A$ agrees with the identity matrix except that its $(i,i)$-entry is zero, while $b_i=x$ and $b_k=0$ for all $k\neq i$.
\end{itemize}
So to prove the \lcnamecref{affbfin} we just need to show that if two affine-linear maps correspond to pairs $(A,b)$ lying in $N^t(X)$, then so does their product. This amounts to showing that if $(A,b),(C,d)\in N^t(X)$, then $(AC,Ad+b)\in N^t(X)$, which is a simple exercise.
\end{pf}

By analysing the examples in \cref{someexsec}, one can show that every finite irreducible affine \prm on $\bbc^2$ is equivalent either to a linear \prm or to a submonoid of one of these examples. It is clear that a great deal more work is needed to understand affine \prms in general. It may be that all finite affine \prms arise as straightforward variations of linear \prms, as with the examples above, or there may be some hidden richness. We leave this investigation for future work.


\begin{thebibliography}{99}
\setlength\itemsep{0pt}

\backrefparscanfalse

\bibi{cox1}{C1}{H.~S.~M.~Coxeter}{Discrete groups generated by reflections}{Ann.\ of Math.}{35}{1934}{588--621}

\bibi{cox2}{C2}{H.~S.~M.~Coxeter}{The complete enumeration of finite groups of the form $r_i^2=(r_ir_j)^{k_{ij}}=1$}{J.~ London Math.\ Soc.}{1935}{10}{21--25}

\bibi{howie}{H1}{J.\ M.\ Howie}{The subsemigroup generated by the idempotents of a full transformation semigroup}{J.~London Math.\ Soc.}{41}{1966}{707--716}

\bibi{howie2}{H2}{J.\ M.\ Howie}{Idempotent generators in finite full transformation semigroups}{Proc.\ Roy.\ Soc.\ Edinburgh Sect.\ A}{81}{1978}{317--323}

\bibi{st}{ST}{G.\ C.\ Shephard \& J.\ A.\ Todd}{Finite unitary reflection groups}{Canadian J.\ Math.}{6}{1954}{274--304}

\bibi{zs1}{ZS1}{A.~Zalesskii \& A.~ Serezhkin}{Linear groups generated by transvections}{Izv.\ Akad.\ Nauk SSSR Ser.\ Mat.}{40}{1976}{26--49}

\bibitem[ZS2]{zs2}A.~Zalesskii \& A.~ Serezhkin, `Linear groups that are generated by pseudoreflections', \textit{Vesc\=\i{} Akad.\ Navuk BSSR Ser.\ F\=\i z.-Mat.\ Navuk} \textbf{1977}, 9--16.\backrefprint\renewcommand\con{}\renewcommand\cons{}

\bibi{zs3}{ZS3}{A.~Zalesskii \& A.~ Serezhkin}{Finite linear groups generated by reflections}{Izv.\ Akad.\ Nauk SSSR Ser.\ Mat.}{44}{1980}{1279--1307}
\end{thebibliography}
\end{document}